\newtheoremstyle{Normal}{}{}{}{}{\bfseries}{:}{.5em}{}
\theoremstyle{Normal}
\newtheorem{theorem}{Theorem}[section]
\newtheorem{lemma}[theorem]{Lemma}
\newtheorem{prop}[theorem]{Proposition}
\newtheorem{example}[theorem]{Example}
\newtheorem{defn}[theorem]{Definition}
\newtheorem{remark}[theorem]{Remark}
\newcommand{\EE}{\mathbb E}
\newcommand{\II}{\mathbb I}
\newcommand{\NN}{\mathbb N}
\newcommand{\PP}{\mathbb P}
\newcommand{\RR}{\mathbb R}
\newcommand{\XX}{\mathbb X}
\newcommand{\ZZ}{\mathbb Z}
\newcommand{\Sp}{\mathcal S}
\newcommand{\C}{\mathcal C}
\newcommand{\Reac}{\mathcal R}
\newcommand{\Rate}{\mathcal K}
\newcommand{\Fsim}{F^{\mathrm{sim}}}
\newcommand{\compart}{\mathcal H}
\newcommand{\chem}{\mathcal I}
\newcommand{\whole}{\mathcal F}
\newcommand{\CoarseStateSpace}{\mathcal N}
\newcommand{\norm}[1]{\left\lVert#1\right\rVert}
\newcommand{\MIN}{\wedge}
\title{Stochastic reaction networks within interacting compartments}
\author{David F.~Anderson\thanks{Department of Mathematics, University of
  Wisconsin-Madison,  anderson@math.wisc.edu.} \and Aidan S.~Howells\thanks{Department of Mathematics, University of
  Wisconsin-Madison,  ahowells@wisc.edu. Corresponding author.}}
\begin{document}

\maketitle

%\tableofcontents

\begin{abstract}
Stochastic reaction networks, which are usually modeled as continuous-time Markov chains on $\ZZ^d_{\ge 0}$, and simulated via a version of the ``Gillespie algorithm,'' have proven to be a useful tool for the understanding of processes, chemical and otherwise, in homogeneous environments. There are multiple avenues for generalizing away from the assumption that the environment is homogeneous, with the proper modeling choice dependent upon the context of the problem being considered. One such generalization was recently introduced in \cite{Duso_Zechner_2020}, where the proposed model includes a varying number of interacting compartments, or cells, each of which contains an evolving copy of the stochastic reaction system. The novelty of the model is that these compartments also interact via the merging of two compartments (including their contents), the splitting of one compartment into two, and the appearance and destruction of compartments. In this paper we begin a systematic exploration of the mathematical properties of this model.  We (i) obtain basic/foundational results pertaining to explosivity, transience, recurrence, and positive recurrence of the model, (ii) explore a number of examples demonstrating some possible non-intuitive behaviors of the model, and (iii) identify the limiting distribution of the model in a special case that generalizes  three formulas from an example in \cite{Duso_Zechner_2020}.
\end{abstract}

\section{Introduction}
Stochastic reaction networks are now commonly utilized to model various types of systems in the biological sciences.  These mathematical models are often continuous-time Markov chains and are used when the counts of at least some of the underlying ``species,'' which are most commonly different molecule types, are low.  In this low copy-number case, the state of the model is a vector giving the integer counts of the different species and transitions are governed by the different possible ``reactions'' that can take place.  These models are typically simulated via the Gillespie algorithm \cite{Gillespie_1976,Gillespie_1977} or the next reaction method \cite{Gibson_Bruck_2000,Anderson_2007}.  See \cite{Anderson_Kurtz_2015}, and references therein, for more on this type of model.

One potential drawback to the standard model is that it assumes a homogeneous environment.  There are multiple ways to generalize, however.  One common generalization is to split the state space itself into differed fixed pieces (often called ``voxels'') and then allow for transitions between adjacent voxels \cite{Isaacson_2013,Popovic_McKinley_Reed_2011}. Thinking of the size of the voxels going to zero leads naturally to a model with continuous space in which the state of the system is given by the type, position, velocity, etc. of each particle in the system.  A reaction can then only take place when the necessary constituent molecules are near each other (with the precise mechanism for defining when they are ``near enough'' left to the modeler). One of the first examples of such a continuous space model was introduced by Doi \cite{Doi_1976}. More generally, there are a whole class of continuous space models known as reaction-diffusion models.  For a brief overview of such models, see \cite{Erban_Othmer_2014}. For a comparison of two specific such models, with an approachable introduction, see \cite{Agbanusi_Isaacson_2014}; for a more general approach, see the introduction of \cite{Razo_Winkelmann_Klein_Hofling_2023}.

A different approach to generalize from the homogeneous case  is to imagine some fixed collection of compartments and model the dynamics within each compartment in the usual way (as a continuous-time Markov chain as described in the first paragraph above) while also  allowing for interactions between adjacent compartments. This is the approach taken in \cite{McKane_Newman_2004} in an ecological context (their ``patches" are our ``compartments"). % and their birth, death, and cross-species interaction are our chemical reactions).
However, ideally one might like to also account for situations like in biological tissue, where reactions take place in cells that are not static but, for example, can appear, divide, possibly merge, or even be destroyed. That is the approach presented in a recent paper by Duso and Zechner, where they developed a Markov model for stochastic reaction networks within interacting compartments \cite{Duso_Zechner_2020}. In particular, their model consists of two basic components:
\begin{enumerate}
    \item a stochastic model of a chemical reaction network;  
    \item a dynamic model of compartments, or cells, which themselves undergo basic transitions such as (i) arrivals, (ii) departures, (iii) mergers, and (iv) divisions.     In the context of \cite{Duso_Zechner_2020}, these four transition types are referred to as inflows, exits, coagulations, and fragmentations, respectively.
\end{enumerate}
Each compartment, or cell, contains a copy of the (evolving) chemical reaction network. When two cells merge, their contents are combined.  When a cell divides, its contents are randomly split among the two new daughter cells.  
Beyond the framework itself, their paper focuses on the framework's practical use, using moment closure methods to derive estimates for various population statistics which are then validated by simulation. They also derive stationary distributions for some special cases.

In the present paper, we attempt to lay the groundwork for exploration of mathematical questions about the Markov chain model developed in \cite{Duso_Zechner_2020}.
We focus on the special case where the compartments can only enter, leave, merge, and divide, all according to mass action kinetics and unaffected by their contents.
Questions pertaining to recurrence, transience, and explosivity are all considered.  We show that in most, but not all, parameter regimes the overall qualitative behavior of the model (i.e., recurrence, transience, or explosivity) is the same as that of the associated stochastic reaction network. We also analyze myriad examples that, taken together, demonstrate some of the non-intuitive (and interesting) possible behaviors of the model.
Moreover, we derive the stationary distribution for the model in the case where the chemistry inside the compartments is well understood in the sense that a formula for the distribution is known for all time (e.g., the DR models of \cite{Anderson_Schnoerr_Yuan_2020}) and the compartments themselves are not allowed to interact (but are not totally static, being allowed to enter and leave the system). Two special cases of this stationary distribution are provided as illustration, both of which generalize formulas from an example in \cite{Duso_Zechner_2020}.

Before moving on, we warn the reader that in the field of epidemiology, the term ``compartment model" has a different meaning. There the compartments are what we would call species. For example, they would speak of an SIR model as dividing individuals into a susceptible compartment, an infected compartment, and a recovered compartment. See e.g.~\cite{Brauer_2008}.

A standard knowledge of continuous-time Markov chains is assumed. See for example Norris \cite{Norris_1997} for a  detailed introduction to the topic. 
For notational convenience, we will use the following shorthand notations: for any two vectors $v,w \in \RR^d_{\ge 0}$ and any vector $x,y\in \ZZ^d_{\ge 0}$ we denote
\begin{align*}
v^w = \prod_{i=1}^d (v_i)^{w_i} \quad \text{and}
\quad x!= \prod_{i=1}^d(x_i)!\quad\text{and}
\quad \binom xy=\prod_{i=1}^d\binom{x_i}{y_i},
\end{align*}
with the conventions that $0^0=1$ and that $\binom xy=0$ for $y<0$ or $y>x$. Moreover, we will always use $d$ to represent the number of species in the model.  Finally, for $x \in \ZZ^d_{\ge 0}$  we define $e_x: \ZZ^d_{\ge 0} \to \ZZ$ to be the function taking the value of one at $x$ and zero otherwise.

The remainder of the paper is outlined as follows. In section \ref{sec:model}, we fully specify the model.  Further, we give two different mathematical representations that are both useful and prove some first basic properties.  In the brief section \ref{sec:explosivity}, we prove that the full model is explosive if and only if the associated reaction network is.  In section \ref{sec:recurrenc}, we give conditions for when the full model is recurrent, positive recurrent, or transient. 
Finally, in section \ref{sec:SD}, we provide the stationary distribution for a special class of models.

\section{The reaction network within interacting compartments (RNIC) model}
\label{sec:model}

As discussed in the introduction, the full model we consider here consists of two sub-models: (i) a stochastic reaction network and (ii) a dynamic model of compartments, or cells, each of which contains an evolving copy of the stochastic reaction network.  We first describe these sub-models individually and then specify how they are combined to make the full model.

\subsection{Stochastic reaction networks}
Suppose we have a finite set $\Sp$, whose elements we shall call \emph{species}, and a directed graph whose vertices are unique linear combinations of species with non-negative integer coefficients. The edges of the graph are called \emph{reactions}; let $\Reac$ denote the set of reactions. The linear combinations which appear as vertices in the graph are called \emph{complexes}; the set of complexes will be denoted $\C$. A \emph{chemical reaction network} (or just \emph{reaction network}; \emph{CRN} for short) is the tuple $\chem = (\Sp,\C,\Reac)$, where $\Sp$, $\C$ and $\Reac$ are as above. See Figure \ref{fig:240895728} for an example reaction network.

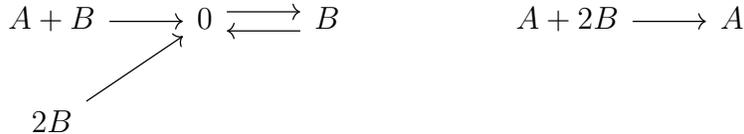
\begin{figure}
\begin{center}
\begin{tikzcd}
   A+B\arrow{r}&0\arrow[yshift=.7ex]{r}& B\arrow[yshift=-.7ex]{l}&&A+2B\arrow{r}&A\\
    2B\arrow{ur}
\end{tikzcd}
\end{center}
\caption{
The CRN with species $A$ and $B$ and reactions $A+B\to0$, $0\to B$, $B\to 0$, $2B\to 0$, and $A+2B\to A$.
Note that $0$ here denotes the linear combination $0A+0B$.}
\label{fig:240895728}
\end{figure}

When talking about specific reaction networks, the species will usually be represented by capital Latin letters. When talking generally, there will be $d$ species $S_1,\dots,S_d$. In this case we will identify $\ZZ^d$ with the space of linear combinations of species with integer coefficients. That is, we naturally identify $\nu\in \C$ with the vector in $\ZZ^d$ whose $i$th element is the coefficient of $S_i$ in $\nu$. We will speak of reactions $\nu\to\nu'\in\Reac$, or sometimes, when we wish to enumerate the reactions as $\{\nu_r \to \nu_r'\}$, we will simply write $r\in \Reac$.

There are multiple ways to associate a mathematical model to a given reaction network, including the use of a deterministic ODE \cite{Shinar_Feinberg_2010}, a diffusion process \cite{Leite_Williams_2019, Anderson_Higham_Leite_Williams_2019}, and a continuous-time Markov chain \cite{Anderson_Kurtz_2015}. The only one of concern to us here is the continuous-time Markov chain model with stochastic mass-action kinetics, in which the state of the system is a vector giving the number of each species present and transitions are determined by the reactions. To fully specify the model, positive (or sometimes, merely non-negative) numbers, called \textit{rate constants}, are assigned to each reaction. If the reaction $\nu\to \nu'$ has rate constant $\kappa$, then in state $x$ that particular reaction occurs with rate $\kappa \binom{x}{\nu}$ and when it occurs the chain transitions to state $x+\nu'-\nu$. So the reactions will happen with rate proportional to the number of ways the chemicals can combine to allow them to happen, and $\kappa$ is the constant of proportionality. If $\Rate$ is a set of rate constants, one for each reaction, we denote by $\chem_{\Rate} = (\Sp,\C,\Reac,\Rate)$ the corresponding stochastic mass-action system. If we let $\kappa_{\nu\to \nu'}$ be the rate constant for the reaction $\nu \to \nu'$, then the Markov chain transitions from state $x\in \ZZ^d_{\ge0}$ to state $y\in \ZZ^d_{\ge 0}$ with rate
\begin{align}\label{eq:massaction}
   q(x,y) = \mathop{\sum_{\nu\to \nu'\in\Reac}}_{\nu'-\nu=y-x}\kappa_{\nu\to \nu'} \binom{x}{\nu}
   =\mathop{\sum_{\nu\to \nu'\in\Reac}}_{\nu'-\nu=y-x}\kappa_{\nu\to \nu'}\prod_{j=1}^d\binom{x_j}{\nu_j}
\end{align}
where the sum is over those reactions for which $\nu'-\nu=y-x$. For $r=\nu_r\to\nu'_r\in\Reac$, we denote the rate of the reaction $r$ in state $x\in\ZZ_{\ge0}^d$ by $\lambda_r(x)$:
\begin{align}\label{eq:lambda}
    \lambda_r(x)=\kappa_{r} \binom{x}{\nu_r}
\end{align}
Note that $\lambda_{\nu\to\nu'}(x)=0$ if $x_i < \nu_i$ for some $i$, since $\binom mk=0$ for $k>m$. Note also that not all authors take the same conventions as we do here. In fact, the convention we use here pertaining to our rate constants is more in line with the biology literature \cite{wilkinson2018stochastic}.  In the mathematical literature it is more common to use a falling factorial $\lambda_{\nu\to \nu'}(x)=\kappa_{\nu\to \nu'}\prod_j (x_j)(x_j-1)\cdots(x_j-\nu_j+1) = \kappa_{\nu\to \nu'} \frac{x!}{(x-\nu)!}$, at the cost that their rate constant $\kappa$ is no longer the constant of proportionality when the reaction takes multiple inputs \cite{Anderson_Kurtz_2015}. This choice plays no fundamental role in our results, but makes certain expressions cleaner in the present context.

We note here that many of the results found in this paper can be generalized to systems with kinetics, i.e., rate functions $\lambda_r$, other than mass-action.  See Remark \ref{rmk:PR_NMA}.

Put more succinctly, we have a Markov process on $\ZZ^d_{\ge 0}$ with infinitesimal generator 
\[
    \mathcal{L}f(x) = \sum_{r \in \Reac} \lambda_{r}(x) (f(x+\nu_r' - \nu_r)-f(x)),
\]
where $\lambda_r$ is determined via \eqref{eq:lambda}, and the above is valid for all functions $f$ that are compactly supported \cite{Ethier_Kurtz_2009}.  The Kolmogorov forward equation, often called the chemical master equation in the context of reaction networks, is then
\begin{align*}
\frac{d}{dt} P_\mu(x,t) = \sum_{r\in \Reac} \lambda_r(x-(\nu_r'-\nu_r)) P_\mu(x-(\nu_r'-\nu_r),t) -  \sum_{r\in \Reac} \lambda_r(x) P_\mu(x,t),
\end{align*}
where $P_\mu(x,t) = P_\mu(X(t) = x)$ is the probability the process $X$ is in state $x\in\ZZ^d_{\ge 0}$ at time $t$, given an initial distribution of $\mu$. We take the convention that $P_\mu(x,t) = 0$ for $x\notin\ZZ^d_{\ge 0}$.

One way to represent the solution to the stochastic model described above is via a representation developed and popularized by Thomas Kurtz. Let $\{Y_{r}\}_{r\in \Reac}$ be a collection of independent, unit-rate Poisson processes, one for each possible reaction, and let $X(t), t\ge 0$, be the solution to
\begin{align}\label{eq:RTC}
    X(t) = X(0) + \sum_{r \in \Reac} Y_r \left( \int_0^t \lambda_r(X(s)) ds\right) (\nu_r'-\nu_r),
\end{align}
then $X$ is a continuous-time Markov chain that satisfies the conditions of the model specified above \cite{Anderson_Kurtz_2015,Ethier_Kurtz_2009,Kurtz_1978}.

\begin{example}\label{example:09878097}
Suppose we assign rate constants to the  example CRN in Figure \ref{fig:240895728} as follows:
\begin{equation}\label{eq:08970897}
\begin{tikzcd}
   A+B\arrow{r}{10}&0\arrow[yshift=.7ex]{r}{2}& B\arrow[yshift=-.7ex]{l}{\kappa}&&A+2B\arrow{r}{8}&A\\
    2B\arrow[swap]{ur}{6}
\end{tikzcd}
\end{equation}
Let $x=(a,b)\in\ZZ_{\ge0}^2$ denote an arbitrary state of the system.  For the particular choice of rate constants given above the positive transition rates $q((a,b),\cdot)$, for $a, b \in \ZZ_{\ge 0}$,  are
\begin{center}
\begin{tabular}{ccccccl}
Reaction(s) &&& Transition &&& Rate\\\hline
$A+B\to0$&&& $(a,b)\mapsto(a-1,b-1)$ &&& $10ab$\\[0.5ex]
$0\to B$&&& $(a,b)\mapsto(a,b+1)$ &&& $2$\\[0.5ex]
$2B\to0$ and $A+2B\to A$&&& $(a,b)\mapsto(a,b-2)$ &&& $\displaystyle 6\frac{b(b-1)}{2}+8a\frac{b(b-1)}{2}$\\[0.5ex]
$B\to0$&&& $(a,b)\mapsto(a,b-1)$ &&& $\kappa b$
\end{tabular}
\end{center}
We chose to write $6\frac{b(b-1)}{2}+8a\frac{b(b-1)}{2}$ instead of $3b(b-1)+4ab(b-1)$ to emphasize our choice of intensity functions. Note that all other rates, such as $q((a,b),(a+1,b))$ or $q((a,b),(a+12,b-3))$, are zero.\hfill $\triangle$
\end{example}

\subsection{Compartment model}
\label{sec:compartment}

Having fully specified our CRN, $\chem_{\Rate} = (\Sp,\C,\Reac,\Rate)$, we turn to our next sub-model: the compartment model. As mentioned in the introduction, we will assume that compartments, or cells, can arrive, depart, merge, and divide. We can use the notation of chemical reaction networks to describe the four possibilities visually via a reaction network,
\begin{equation*}
\begin{tikzcd}
    0\arrow[yshift=.7ex]{r}& C\arrow[yshift=-.7ex]{l}
    \arrow[yshift=.7ex]{r}&
    2C\arrow[yshift=-.7ex]{l}
\end{tikzcd}
\end{equation*}
with $0\to C$ representing arrivals, $C\to 0$ representing departures, $C \to 2C$ representing division, or fragmentation, and $2C \to C$ representing mergers, or coagulations. Moreover, we assume that the stochastic model tracking the number of compartments behaves as a standard stochastic reaction network as already described in the previous section (however, see Remark \ref{rmk:PR_NMA} for an allowable generalization to the choice of kinetics). We will term this reaction network the \textit{compartment network}, and denote it by $\compart=(\Sp_{\text{comp}},\C_{\text{comp}},\Reac_{\text{comp}})$. Note that $\Sp_{\text{comp}} = \{C\}$ and $\C_{\text{comp}}$ is a subset of $\{0,C,2C\}$ (depending on which rate constants are non-zero). If rate constants are added as follows,
\begin{equation*}
\begin{tikzcd}
    0\arrow[yshift=.7ex]{r}{\kappa_I}& C\arrow[yshift=-.7ex]{l}{\kappa_E}
    \arrow[yshift=.7ex]{r}{\kappa_F}&
    2C\arrow[yshift=-.7ex]{l}{\kappa_C}
\end{tikzcd}
\end{equation*}
where each $\kappa_E, \kappa_I, \kappa_C, \kappa_F\ge 0$, then we will denote the corresponding stochastic mass-action system by $\compart_\Rate=(\Sp_{\text{comp}},\C_{\text{comp}},\Reac_{\text{comp}},\Rate_{\text{comp}})$.
According to \eqref{eq:RTC}, if we denote by $M_C(t)$  the number of compartments at time $t$, then  one way to represent this model is as the solution to
\begin{align*}
M_C(t) &= M_C(0) + Y_I\left(  \kappa_I t\right)- Y_E\left(  \int_0^t \kappa_E M_C(s) ds\right)+ Y_F\left(  \int_0^t \kappa_F M_C(s) ds\right)\\
&\hspace{.5in}- Y_C\left(  \int_0^t \kappa_C \frac{M_C(s) (M_C(s) - 1)}{2}\, ds\right),
\end{align*}
where  $Y_I, Y_E, Y_F$, and $Y_C$ are independent unit-rate Poisson processes. 

\subsection{Specifying the full, combined model}

Our full model, which we will term a \textit{reaction network within interacting compartments} (RNIC), begins with two networks, one representing the dynamics of the compartments themselves and one representing the chemistry taking place inside the compartments.
\begin{itemize}
    \item A CRN $\compart_\Rate$ of the form $0\leftrightarrows C\leftrightarrows 2C$, called the \textit{compartment network}. The state of this CRN (in $\ZZ_{\ge0}$) will be the number of compartments.

    \item An CRN $\chem_\Rate$, called the \textit{chemistry} (or \emph{\underline Internal network}), with $d$ species.
\end{itemize}

The behavior of the model between transitions of the compartment model is straightforward: the CRN within each compartment evolves independently as a Markov chain with transition rates specified by \eqref{eq:massaction}.  All that remains is to specify what happens to the full model at the transition times of the compartment model. Hence, there are four cases to consider.

\begin{itemize}

    \item An arrival:  $0\to C$. We assume the existence of a probability measure $\mu$ on $\ZZ^d_{\ge 0}$. Each time  an arrival event occurs, we add a new compartment whose initial state is chosen according to $\mu$, independent of the past. (Note that $\mu$ is not necessary when $\kappa_I=0$.)
    
    \item A departure: $C\to 0$. When a departure event occurs, we choose one of the compartments, uniformly at random, for deletion.
        
    \item A merger: $2C \to C$. When a merger event occurs, we select two compartments, uniformly at random. We replace the chosen compartments with a single compartment. The state of the new compartment is the sum of the states of the two it replaced.
    
    \item A division: $C \to 2C$. When a division event occurs, we select a compartment, uniformly at random. We replace the  chosen compartment  with two new compartments, whose initial states are determined by having each molecule from the chosen compartment select one of the two new compartments uniformly. For example, if there are $n_A$ type $A$ species in the chosen compartment, then one of the new compartments will get a number of $A$ molecules given by a binomial distribution with parameters $n_A$ and $p = \tfrac12$, and the other compartment will get $n_A$ minus that value.
\end{itemize}
This whole system will be denoted $\whole=(\chem_\Rate,\compart_\Rate,\mu)$.

\begin{remark}
    Above, we assume that when divisions, i.e., compartment transitions of the form $C\to2C$, happen, each molecule picks a new compartment uniformly at random. This assumption makes the constructions in this paper easier.  However, our proofs only require that the total number  of each species across compartments is preserved when each division happens. 
\end{remark}

Similar to our network representations for reaction networks, we can specify the above model through a picture of the following form:
\begin{equation}\label{model}
\begin{tikzcd}
    \chem_\Rate &&
    0\arrow[yshift=.7ex]{r}{\kappa_I}& C\arrow[yshift=-.7ex]{l}{\kappa_E}
    \arrow[yshift=.7ex]{r}{\kappa_F}&
    2C\arrow[yshift=-.7ex]{l}{\kappa_C}&&
    \mu
\end{tikzcd}
\end{equation}
where ``$\chem_\Rate$'' is a stand-in for a standard CRN diagram, such as the one in \eqref{eq:08970897}.

\begin{example}\label{example:full_model}
If $\chem_\Rate$ is exactly the network diagrammed in Example \ref{example:09878097} and $\mu$ is the point mass with $3$ molecules of $A$ and $4$ molecules of $B$, we would write
\begin{equation*}
\begin{tikzcd}
   A+B\arrow{r}{10}&0\arrow[yshift=.7ex]{r}{2}& B \arrow[yshift=-.7ex]{l}{\kappa} & \hspace{-.3in} A+2B\arrow{r}{3}&A\\
    2B\arrow[swap]{ur}{5}
\end{tikzcd}\hspace{0.3in} 
\begin{tikzcd}
    0\arrow[yshift=.7ex]{r}{\kappa_I}& C\arrow[yshift=-.7ex]{l}{\kappa_E}
    \arrow[yshift=.7ex]{r}{\kappa_F}&
    2C\arrow[yshift=-.7ex]{l}{\kappa_C}
\end{tikzcd}
\hspace{0.5in} \delta_{(3,4)}(a,b)
\end{equation*}
\hfill $\triangle$
\end{example}

See also Example \ref{Ex: Intro compartment model} for another specific example.

There are multiple avenues for generalizations. For example, when a merger occurs it could be that not all the molecules make it into the new compartment, or when a division occurs it could be that some molecules are lost, or there is a non-uniform mechanism for distributing the molecules.  Moreover, it could be that the rate of compartment fragmentation or exit depends on the internal state of the compartment. These models all fall under the more general framework given in \cite{Duso_Zechner_2020} and could be studied mathematically in the future if there is a desire, but for the initial development of the mathematics we choose to keep things simpler.

\subsubsection{Simulation representation}
\label{sec:simulation}
There are multiple ways to describe a Markov model satisfying the information given in the ingredients $\whole=(\chem_\Rate,\compart_\Rate,\mu)$. The first we give is what we term a ``simulation'' representation in which we enumerate the compartments and track the counts of the species in each compartment.

The simulation representation will be a Markov chain $\Fsim$ whose state is a finite vector of elements of $\ZZ^d_{\ge 0}$, where $d$, as always, is the number of species. We first describe the model via an example.  Afterwards we will provide the mathematical details.

\begin{example}\label{ex:simulation representation}
  Consider again the model from Example \ref{example:full_model}.  
  Suppose that at time $T$ there are 4 compartments, where the first has two $A$ and two $B$, the second has no $A$ and one $B$, the third again has two of each, and the last has one $A$ and twelve $B$. Then the state of the model $\Fsim$ would be the vector
  \begin{align*}
  \left( \left[ \begin{array}{c} 2\\2\end{array} \right], \left[ \begin{array}{c} 0\\1\end{array} \right], \left[ \begin{array}{c} 2\\2\end{array} \right], \left[ \begin{array}{c} 1\\12\end{array} \right]\right).
  \end{align*}
  We now suppose that at time $T$ a transition occurs.  We first consider four possibilities if the transition is due to a reaction of the compartment model.
  
  \begin{itemize}
  \item Suppose first that the compartment transition is an inflow event.  We will make the convention that the new compartment due to an inflow reaction will always be placed at the end of the vector of states. Hence, because  the initial distribution for arriving compartments is a point mass at $(3,4)$ the new state of the full system is
  \begin{align*}
  \left( \left[ \begin{array}{c} 2\\2\end{array} \right], \left[ \begin{array}{c} 0\\1\end{array} \right], \left[ \begin{array}{c} 2\\2\end{array} \right], \left[ \begin{array}{c} 1\\12\end{array} \right], \left[ \begin{array}{c} 3\\4\end{array} \right]\right).
  \end{align*}

  \item Next suppose that the compartment transition is an exit event.  In this case we must choose a compartment at random, delete it from the vector, and re-index the other components.  Thus, we start by choosing from $\{1,2,3,4\}$, each with probability $1/4$. Suppose that the value $3$ is chosen so that the third compartment will be deleted.  In this case,  the new state of the full system is
  \begin{align*}
  \left( \left[ \begin{array}{c} 2\\2\end{array} \right], \left[ \begin{array}{c} 0\\1\end{array} \right], \left[ \begin{array}{c} 1\\12\end{array} \right]\right).
  \end{align*}

  \item Now suppose that the compartment transition is a merger, or coagulation.  Now we must select two compartments at random and combine their contents.  We will always choose that the combined contents of the compartments will be placed within the compartment with the lower index and will delete the compartment with the higher index.    Thus, assuming we choose the compartments indexed  1 and 2, we then merge the first and second compartments and place their contents into compartment 1 (since it has the smaller index of the two chosen) and then delete the second compartment. The resulting state is
  \begin{align*}
  \left( \left[ \begin{array}{c} 2\\3\end{array} \right], \left[ \begin{array}{c} 2\\2\end{array} \right], \left[ \begin{array}{c} 1\\12\end{array} \right]\right).
  \end{align*}

  \item Finally, we suppose that the compartment transition is a fragmentation.  The procedure will be as follows.  We will first choose the index of the compartment that fragments, we then create two new compartments and will then split the contents between these new compartments (with each particular molecule choosing between the new compartments with equal probability).  The originally chosen compartment will be deleted and the two new compartments will be placed at the end of the vector of states.  
  
  For example, suppose we choose compartment 3 for fragmentation (which occurs with probability $1/4$).   We then split the contents of the original third compartment (four molecules total, $2$ of $A$ and $2$ of $B$) uniformly at random between the two new compartments. Suppose for concreteness that we split as $\left[ \begin{array}{c} 1\\2\end{array}\right]$ and $\left[ \begin{array}{c} 1\\0\end{array}\right]$. Then, after deleting the 3rd compartment and adding these two onto the end we have a new state for the full model of
   \begin{align*}
  \left( \left[ \begin{array}{c} 2\\2\end{array} \right], \left[ \begin{array}{c} 0\\1\end{array} \right], \left[ \begin{array}{c} 1\\12\end{array} \right], \left[ \begin{array}{c} 1\\2\end{array}\right], \left[ \begin{array}{c} 1\\0\end{array}\right]\right).
  \end{align*}
  \end{itemize}

  It is also possible that the transition at time $T$ was due to a reaction taking place within one of the compartments. For example, if the reaction $A+2B\to A$ happens inside the fourth compartment, then the state of the whole system, $\Fsim$, will become
  \begin{align*}
  \left( \left[ \begin{array}{c} 2\\2\end{array} \right], \left[ \begin{array}{c} 0\\1\end{array} \right], \left[ \begin{array}{c} 2\\2\end{array} \right], \left[ \begin{array}{c} 1\\10\end{array} \right]\right).
  \end{align*}
  \hfill $\triangle$
\end{example}

Now we give the formal mathematical description of $\Fsim$. First, let $\{M_C(t)\}_{t\ge0}$ be the Markov chain associated to the compartment network $\compart_\Rate$. Then $M_C(t)$ will be the number of compartments at time $t$. Let $\{T_i\}_{i=0}^\infty$ be the jump times for this Markov chain, where $T_0=0$. For any $i\ge 0$ and any $j=1,\dots,M_C(T_i)$, let $\{X^i_j(t)\}_{t\in [T_i,T_{i+1}]}$ be realizations of the Markov chain associated to $\chem_\Rate$ with initial distributions (at time $T_i$) specified below. Suppose that for any $i_1,i_2$ and $j_1,j_2$ with either $i_1\ne i_2$ or $j_1\ne j_2$, the chains $X^{i_1}_{j_1}$ and $X^{i_2}_{j_2}$ are independent conditional on their initial conditions, and suppose that the initial distributions are chosen in the following manner (which are just formal characterizations of the details provided in the example above):
\begin{itemize}
    \item If the compartment transition at time $T_{i+1}$ was an inflow event ($0\to C$), then let $X^{i+1}_j(T_{i+1})=X^i_j(T_{i+1})$ for $j=1,\dots,M_C(T_i)$, and for $j=M_C(T_{i+1})=M_C(T_i)+1$ let $X^{i+1}_j(T_{i+1})$ be distributed according to $\mu$, independently of everything in the past.
    
    \item If the compartment transition at time $T_{i+1}$ was an exit event $(C\to 0$), then let $J_i$ be chosen uniformly at random from $\{1,\cdots,M_C(T_i)\}$, independently of everything in the past. Let $X^{i+1}_j(T_{i+1})=X^i_j(T_{i+1})$ for $j<J_i$, and let $X^{i+1}_j(T_{i+1})=X^i_{j+1}(T_{i+1})$ for $j\ge J_i$.
    
    \item If the compartment transition at time $T_{i+1}$ was a merger, or coagulation, 
    event ($2C\to C$), then let $J^1_i$ and $J^2_i$ be chosen uniformly at random from $\{1,\cdots,M_C(T_i)\}$ and $\{1,\cdots,M_C(T_i)\}\setminus\{J^1_i\}$, respectively, independent of everything in the past. Let $X^{i+1}_j(T_{i+1})=X^i_j(T_{i+1})$ for $j<\max\{J^1_i,J^2_i\}$ with $j\ne \min\{J^1_i,J^2_i\}$, let $X^{i+1}_j(T_{i+1})=X^i_{j+1}(T_{i+1})$ for $j\ge \max\{J^1_i,J^2_i\}$, and let $X^{i+1}_j(T_{i+1})=X^i_{J^1_i}(T_{i+1})+X^i_{J^2_i}(T_{i+1})$ for $j=\min\{J^1_i,J^2_i\}$.
    
    \item If the compartment transition at time $T_{i+1}$ was a fragmentation event ($C\to 2C$), then let $J_i$ be chosen uniformly at random from $\{1,\cdots,M_C(T_i)\}$, independently of everything in the past. Let $\{Z^i_k(x):x\in\ZZ^d,k=1,\dots,d\}$ be a collection of random variables, independent of each other and everything else, with $Z^i_k(x)\sim\mathrm{Binom}\left(0.5,x_k\right)$. Let $Z^i(x)$ denote the vector $\big(Z^i_1(x),\cdots,Z^i_d(x)\big)$. Let $X^{i+1}_j(T_{i+1})=X^i_j(T_{i+1})$ for $j<J_i$, let $X^{i+1}_j(T_{i+1})=X^i_{j+1}(T_{i+1})$ for $j=J_i,\dots,M_C(T_i)-1$, and for $j=M_C(T_i)$ let $X^{i+1}_j(T_{i+1})=Z^i(X^i_{J_i}(T_{i+1}))$ and $X^{i+1}_{j+1}(T_{i+1})=X^i_{J_i}(T_{i+1})-X^{i+1}_j(T_{i+1})$.
\end{itemize}

Let $\Fsim(t)$ be the vector $\left(X^i_1(t),X^i_2(t),\cdots,X^i_{M_C(t)}(t)\right)$, where $i$ is such that $T_i\le t<T_{i+1}$.

\begin{lemma}\label{lemma:sim-rep}
The process $\{\Fsim(t)\}_{t\ge0}$ is a continuous time Markov chain with state space $\bigcup_{m\ge0}\left(\ZZ_{\ge0}^d\right)^m$, the space of finite tuples of elements of $\ZZ_{\ge0}^d$.
\end{lemma}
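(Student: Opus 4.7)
The plan is to verify the Markov property and time-homogeneity directly from the construction, with the state space claim following by inspection. Indeed, by construction $\Fsim(t)=(X^i_1(t),\ldots,X^i_{M_C(t)}(t))$ for $T_i\le t<T_{i+1}$, so $\Fsim(t) \in \bigcup_{m\ge 0}(\ZZ_{\ge 0}^d)^m$.

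My approach is to exhibit the natural candidate infinitesimal generator $\mathcal L$ read off from the five transition rules, and to check that the constructed process evolves according to it. On compactly supported functions, $\mathcal L f(\mathbf x)$ at a state $\mathbf x=(x_1,\ldots,x_m)$ is a sum of pieces corresponding to (i) internal reactions in each compartment, contributing $\sum_{j=1}^m\sum_{r\in\Reac}\lambda_r(x_j)(f(\mathbf x^{(j,r)})-f(\mathbf x))$, where $\mathbf x^{(j,r)}$ replaces $x_j$ by $x_j+\nu_r'-\nu_r$; (ii) inflow at rate $\kappa_I$ with a new compartment appended, distributed as $\mu$; (iii) exit at total rate $\kappa_E m$, uniformly deleting one entry; (iv) merger at total rate $\kappa_C m(m-1)/2$, uniformly choosing an unordered pair and combining; and (v) fragmentation at total rate $\kappa_F m$, uniformly choosing a compartment $j$ and splitting its contents by independent binomials with parameter $1/2$. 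For any fixed $\mathbf x$ the total rate is finite, so $\mathcal L$ determines a CTMC up to a possible explosion time.

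To match the construction to $\mathcal L$, I would observe two things. First, between jump times $T_i$ and $T_{i+1}$ of $M_C$, the internal chains $X^i_1,\ldots,X^i_{M_C(T_i)}$ are by construction independent CTMCs with generator inherited from $\chem_\Rate$, so their joint evolution agrees with part (i) of $\mathcal L$ on $[T_i,T_{i+1})$; this is, in effect, a direct consequence of the random-time-change representation \eqref{eq:RTC} applied to each compartment separately. Second, at each jump time $T_{i+1}$, the compartment transition type has rate depending only on $M_C(T_i)$ (crucially, the model specification makes these rates independent of internal contents), and the auxiliary variables $J_i,J^1_i,J^2_i,Z^i$ used to update $\Fsim$ are, by construction, independent of everything up to $T_{i+1}^-$. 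Combining with the strong Markov property of $M_C$ gives that the conditional distribution of $\Fsim(T_{i+1})$ given the past depends only on $\Fsim(T_{i+1}^-)$, yielding the Markov property across jump times.

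The main obstacle is purely bookkeeping: confirming that the probabilities produced by the constructive prescription match those implicit in $\mathcal L$. For example, one checks that the rate at which compartment $j$ is deleted is $\kappa_E m \cdot \tfrac{1}{m}=\kappa_E$ in (iii), that the rate at which a specific unordered pair $\{j,k\}$ merges is $\kappa_C m(m-1)/2 \cdot \tfrac{2}{m(m-1)}=\kappa_C$ in (iv), and that the fragmentation probabilities coincide with the product of binomial $(1/2,(x_j)_k)$ distributions in (v). None of this is conceptually subtle once the candidate generator is written down.
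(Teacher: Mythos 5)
Your route is genuinely different from the paper's: you write down the candidate generator and match the construction to it piece by piece (random-time-change between compartment events, fresh independent auxiliary randomness at compartment events, plus the rate bookkeeping $\kappa_E m\cdot\frac1m=\kappa_E$, $\kappa_C\frac{m(m-1)}{2}\cdot\frac{2}{m(m-1)}=\kappa_C$, etc.). The paper instead argues at the level of holding times: each of $M_C$ and the $X^i_j$ has exponential holding times, these are independent, the minimum of independent exponentials is exponential, and the index achieving the minimum is independent of its value, so the embedded updates are independent of the holding times. Your version is heavier on bookkeeping but has the advantage of producing the explicit generator of $\Fsim$ as a byproduct; the paper's is shorter and avoids writing the generator down at all. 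Both are acceptable ways to establish the Markov property.

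There is, however, one genuine gap: you dismiss the state-space claim as following ``by inspection'' from $\Fsim(t)=(X^i_1(t),\dots,X^i_{M_C(t)}(t))$ for $T_i\le t<T_{i+1}$. That formula only defines $\Fsim(t)$ at times $t$ for which some $i$ with $T_i\le t<T_{i+1}$ exists; if $M_C$ were explosive, then for $t$ beyond the explosion time $\sup_i T_i$ no such $i$ exists and $\Fsim(t)$ is not a finite tuple (indeed not defined). So the assertion that $\{\Fsim(t)\}_{t\ge0}$ takes values in $\bigcup_{m\ge0}(\ZZ_{\ge0}^d)^m$ for \emph{all} $t\ge0$ is exactly the statement that $M_C$ is non-explosive for every choice of the rate constants $\kappa_I,\kappa_E,\kappa_F,\kappa_C$. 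This is true --- it is the standard classification of one-dimensional mass-action networks of the form $0\leftrightarrows C\leftrightarrows 2C$, where the only quadratic rate belongs to the decreasing reaction $2C\to C$ (the paper cites \cite{Xu_Hansen_Wiuf_2022}) --- but it must be invoked; it is not visible from the definition of $\Fsim$ alone. Your remark that $\mathcal L$ determines a CTMC only ``up to a possible explosion time'' shows you are aware of explosion as an issue for the full chain, but the same caveat applies already to the compartment-count subchain and is precisely what the state-space claim rests on.
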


\begin{proof}
To show that this is a Markov process we have to show that the holding times are exponential and the updates are independent of the holding times. To see that the holding times are exponential, notice that since $M_C$ is a Markov chain it has exponential holding times, and similarly for each $X^i_j$. But the holding times for these processes are independent, and the minimum of independent exponential random variables is itself exponential.

Furthermore, the minimum of a (finite) collection of independent exponential random variables is independent of the index at which the minimum occurs, so the updates are indeed independent of the holding times.

The fact that $\Fsim$ takes values in the space of finite tuples is equivalent to $M_C$ being finite for all time, which in turn is equivalent to the fact that $M_C$ is not explosive, regardless of the choice of rate constants in $\compart_\Rate$. This is a standard result in the theory of $1$-$d$ mass action stochastic reaction networks; see for instance \cite{Xu_Hansen_Wiuf_2022}.
\end{proof}

\subsubsection{An explicit construction of the simulation representation}\label{sec:construction}

We discuss one way of constructing the model described in Section \ref{sec:simulation}, in the spirit of the Kurtz representation \eqref{eq:RTC}. Here, by ``construction'' we mean an explicit detailing of the random processes and random variables needed to generate a single realization of the process.  The construction is of interest since it is amenable to analysis, coupling methods,  simulation methods, etc.   The construction will be used later in this paper to verify some behaviors of Example \ref{ex:chem positive recurrent}.

Let $\whole=(\chem_\Rate,\compart_\Rate,\mu)$ be as above. Suppose that $M_C(0)$ is the initial number of compartments in the system and further suppose that $M_C$ is given as the solution to 
\begin{align}
\begin{split}
M_C(t) &= M_C(0) + Y_I\left(  \kappa_I t\right)- Y_E\left(  \int_0^t \kappa_E M_C(s) ds\right)+ Y_F\left(  \int_0^t \kappa_F M_C(s) ds\right)\\
&\hspace{.5in}- Y_C\left(  \int_0^t \kappa_C \frac{M_C(s) (M_C(s) - 1)}{2}\, ds\right),\label{eq:compartments11}
\end{split}
\end{align}
where $Y_I, Y_E, Y_F$, and $Y_C$ are independent unit-rate Poisson processes. Then $M_C$ is the Markov chain on $\ZZ_{\ge 0}$ associated to $\compart_\Rate$, so that $M_C(t)$ gives the number of compartments at any time $t \ge 0$.

The jump times of the counting processes $R_I(t) = Y_I\left(  \kappa_I t\right)$, $R_E(t) = Y_E\left(  \int_0^t \kappa_E M_C(s) ds\right)$, $R_F(t) = Y_F\left(  \int_0^t \kappa_F M_C(s) ds\right)$, and $R_C(t) = Y_C\left(  \int_0^t \kappa_C \frac{M_C(s) (M_C(s) - 1)}{2}\, ds\right)$ determine when the RNIC model transitions due to changes in the count of the compartments.  To each such transition we will also require a collection of  random variables needed to carry out the updates in the RNIC model.  We detail these random variables below. Before proceeding, we remind the reader that a uniform random variable can be used to generate a sample from  a given distribution  in a number of ways.  For some standard transformation methods see, for example, \cite[Section 5.2]{anderson2017introduction}.

In the description below all random variables are independent of each other and of the Poisson processes $Y_I, Y_E, Y_F,Y_C$.  We require:
\begin{itemize}
    \item  A collection of independent uniform random variables $\{u_i^I\}$, $i = 1,2,\dots$.  When $R_I(T) - R_I(T-) = 1$, the random variable $u_{R_I(T)}^I$ is used to generate a sample from $\mu$.
    
    \item A collection of independent uniform random variables $\{u_i^E\}$, $i = 1,2, \dots$. When $R_E(T) - R_E(T-) = 1$,  the random variable $u_{R_E(T)}^E$ is used to determine which compartment exits at that time.
    
     \item Two collections of independent uniform random variables: (i) $\{u_i^{F}\}$,  $i = 1,2, \dots$, and (ii) an array $\{\hat u_{i,j}^F\}$, $i,j\in \{1,2,\dots\}$. When $R_F(T) - R_F(T-) = 1$,  the random variable $u_{R_F(T)}^F$ is used to determine which  compartment fragments.   We then utilize the finite collection $\{\hat u^F_{R_F(T),j}\}$, $j = 1,\dots, M$, where $M$ is the total number of molecules in the chosen compartment, to divide the different molecules between the two new cells.
    \item A collection of independent uniform random variables $\{u_i^C\}$, $i = 1,2, \dots$. When $R_C(T) - R_C(T-) = 1$,  the random variable $u_{R_C(T)}^C$ is used to determine which two compartments are chosen to merge.  
\end{itemize}
Note that the collections detailed above are chosen before a realization is generated.  Said differently, the realization of the RNIC model is a function of these independent random variables.

All that remains is to give the timing of the different chemical reactions.  One method is the following.  Let $\{Y_{r}\}_{r\in \Reac}$ be a collection of independent (of each other, and all other random objects so far), unit-rate Poisson processes, one for each possible reaction in $\chem_\Rate$.  Moreover, for each $r \in \Reac$, let $\{u^r_i\}$, $i=1,2,\dots$ be a collection of independent uniform random variables.  
Then, for $r \in \Reac$, we let 
\[
R_r(t) = Y_r \left( \mathop{\sum_{i\ge0}}_{T_i\le t}\int_{T_i}^{T_{i+1}\MIN t}\sum_{j=1}^{M_C(T_i)} \lambda_r(X^i_j(s)) ds\right),
\]
where the $T_i$ are the jump times of the process $M_C$, $X_j^i(s)$ is the state of the process in compartment $j$ at time $s$, and $\lambda_r$ is given as in \eqref{eq:lambda}.
Then $R_r$ is the counting process that jumps by $+1$ when the $r$th reaction takes place in \emph{some} compartment. 
 When $R_r(T) - R_r(T-) = 1$, meaning a reaction has taken place somewhere, we use $u^r_{R_r(T)}$ to determine the compartment within which the reaction took place.  In particular, the probability that it took place in compartment $k$ is simply
 \[
    \frac{\lambda_r(X_k^i(T-))}{\sum_{j=1}^{M_C(T_i)} \lambda_r(X_j^i(T-))}.
\]

\subsubsection{A coarse-grained representation}
\label{sec:coarse-grained}

While the description (and construction) above is often convenient for the sake of analysis and simulation, it is sometimes not the most natural way to think about these models. For example, suppose we have a model with a single species, denoted $S$, and for which there  are two compartments at time $t$, so that $M_C(t) = 2$. It is reasonable to think that we would not care to distinguish the situation in which there are $6$ molecules of species $S$ in the first compartment and $2$ in the second, which is the state $(6,2)$, versus the situation of $2$ molecules of $S$ in the first and $6$ in the second, which is the state $(2,6)$. In this situation, we would simply care that we have one compartment with two $S$ molecules, another with six, and there are no other compartments. 

To handle this, we  consider a function $n: \ZZ_{\ge 0} \to \ZZ_{\ge 0}$ in which $n_x:=n(x)$ gives the number of compartments present with precisely $x$ molecules of $S$ (hence the notation that ``$n$'' gives the \underline{\textit{n}}umber of compartments with different counts).  In this case,  the state of the example system described above would simply be the function with 
\[
    n_x = \begin{cases}
    1, & \text{ if } x=2\\
    1, & \text{ if } x = 6\\
    0, & \text{else}.
    \end{cases}
\]
Note that in this one-dimensional case we can also think of $n$ as an ``infinite vector.''  For example, in our example above we would have
\[
    n = (0,0,1,0,0,0,1,0,0,\dots),
\]
with only zeros continuing on.

For another example, we could consider the case discussed in Example \ref{ex:simulation representation}, where there are two species $A$ and $B$ and the state for the simulation representation was
\begin{align*}
  \left( \left[ \begin{array}{c} 2\\2\end{array} \right], \left[ \begin{array}{c} 0\\1\end{array} \right], \left[ \begin{array}{c} 2\\2\end{array} \right], \left[ \begin{array}{c} 1\\12\end{array} \right]\right).
\end{align*}
In this case, the state could naturally be described by the function
\[
    n_x = \begin{cases}
    2, & \text{ if } x = \left[ \begin{array}{c}
    2\\ 2 \end{array}\right]\vspace{.1in}\\ 
    1, & \text{ if } x= \left[ \begin{array}{c}
    0\\ 1 \end{array}\right]\vspace{.1in}\\
    1, & \text{ if } x= \left[ \begin{array}{c}
    1\\ 12 \end{array}\right]\\
    0, & \text{else}.
    \end{cases}
\]
Note that in this example, it is not natural to view $n$ as an ``infinite vector.''  Instead, it would be natural to view it as an ``infinite array'' with a two in the $(2,2)$ component, ones in the $(0,1)$ and $(1,12)$ components, and zeros elsewhere.

Thus, we may take the following approach, as done in \cite{Duso_Zechner_2020}.  
The state space of the coarse-grained model will be
\begin{align}\label{eq:coarsgrainedstatespace}
\begin{split}
\CoarseStateSpace:={}& \{ \text{functions } n: \ZZ^d_{\ge 0} \to \ZZ_{\ge 0} \text{ with compact support}\}\\
={}&\{ \text{functions } n: \ZZ^d_{\ge 0} \to \ZZ_{\ge 0} \text{ with finite support}\}\\
={}&\{ \text{functions } n: \ZZ^d_{\ge 0} \to \ZZ_{\ge 0} \text{ with finite $\ell^1$ norm}\},
\end{split}
\end{align}
where we observe that all three sets are the same. 
Given $n:\ZZ_{\ge0}^d\to\ZZ_{\ge0}$, we write $n=(n_x)_{x\in\ZZ^d_{\ge0}}$. For each possible state $x \in \ZZ^d_{\ge 0}$ of the chemistry, $n_x\in \ZZ_{\ge 0}$ represents the number of compartments whose chemistry has that particular state. Given Markov chains $M_C$ and $X^i_j$ as defined in Section \ref{sec:simulation}, let $N$ be the process where $N_x(t)$ is the number of compartments in state $x\in \ZZ^d_{\ge 0}$ at time $t \ge 0$:
\[
    N_x(t)=\sum_{i=0}^\infty\II\{t\in[T_i,T_{i+1})\}\sum_{j=1}^{M_C(T_i)}\II\{X^i_j(t)=x\}.
\]

Note that the total number of compartments at time $t\ge 0$ can be recovered from $N(t)$ via
\[
    M_C(t)= \norm{N(t)}_{\ell^1} := \sum_{x\in \ZZ^d_{\ge 0}} N_x(t).
\]
Note also that the process $N$ transitions iff $\Fsim$ does. This fact is important enough that we state it as a lemma:
\begin{lemma}\label{lemma:identical transition times}
Let $\Fsim$ and $N$ be as above. Then $N$ undergoes a transition at time $t$ iff $\Fsim$ does.
\end{lemma}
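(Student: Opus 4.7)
The plan is to prove each direction of the iff separately. The backward direction ($N$ transitions $\Rightarrow$ $\Fsim$ transitions) is immediate from the definition: the formula
\[
    N_x(t) = \sum_{i=0}^\infty \II\{t \in [T_i, T_{i+1})\} \sum_{j=1}^{M_C(T_i)} \II\{X^i_j(t) = x\}
\]
shows that $N(t)$ is a deterministic (coordinate-wise) function of $\Fsim(t)$, so if $\Fsim(t)=\Fsim(t-)$, then $N(t)=N(t-)$. The contrapositive gives the backward direction.

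For the forward direction, I would split into cases based on which of the five types of transition causes $\Fsim$ to jump at time $t$. The first four cases (inflow, exit, merger, and fragmentation) are handled uniformly via the relation $M_C(t) = \norm{N(t)}_{\ell^1}$: each of these compartment-level transitions changes $M_C$ by exactly $\pm 1$, so $\norm{N}_{\ell^1}$ changes at time $t$, which forces $N$ itself to change. The fifth case is an internal reaction $\nu \to \nu'$ firing in some compartment whose state is $x$; that one compartment's state becomes $x + \nu' - \nu$, so $n_x$ decreases by one and $n_{x + \nu' - \nu}$ increases by one. Under the standard CRN convention that $\nu \ne \nu'$ for every reaction (so that reactions are genuine directed edges between distinct complexes), these are distinct indices and $N$ changes.

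The only mild subtlety is the possibility that two transitions of different types occur at exactly the same time $t$ in such a way that their effects on $N$ cancel (for instance, an inflow places a new compartment in state $x$ at the same instant that an internal reaction moves some other compartment out of state $x$). However, in the explicit construction of Section \ref{sec:construction} all driving objects are independent unit-rate Poisson processes, so almost surely no two of them jump simultaneously, and this null event can be discarded. I do not anticipate any real obstacle here; the lemma is essentially a consistency check that the coarse-graining $\Fsim \mapsto N$ does not absorb any individual jumps, and the argument is purely combinatorial once the cases are laid out.
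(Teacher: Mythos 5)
Your proof is correct and follows essentially the same route as the paper's: the backward direction because $N$ is a deterministic function of $\Fsim$, and the forward direction by observing that compartment-level transitions change $\norm{N}_{\ell^1}=M_C$ while an internal reaction moves one compartment between two distinct states $x$ and $x+\nu'-\nu$, changing two coordinates of $N$. The extra care about simultaneous jumps and the $\nu\ne\nu'$ convention is fine but not needed beyond what the paper already records.
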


\begin{proof}
On the one hand, $N$ is defined as a function of $\Fsim$ and so $N$ cannot transition if $\Fsim$ does not. On the other hand, all possible transitions of $\Fsim$ cause a change in $N$: If $\Fsim$ transitions because $M_C$ does, then $\norm{N}_{\ell^1}=M_C$ changes, whereas if $\Fsim$ changes otherwise then the contents of some single compartment updated, which changes $N$.
\end{proof}

For the lemma below, we recall that for $x \in \ZZ^d_{\ge 0}$ we define $e_x$ to be the function taking the value of one at $x$ and zero otherwise.
\begin{lemma}
Let $N(t)$ be as defined above. Then $\{N(t)\}_{t\ge0}$ is a Markov chain taking values in $\CoarseStateSpace$, defined in \eqref{eq:coarsgrainedstatespace}. Moreover, for $n\in\CoarseStateSpace$, the transitions rates are as follows: 
\begin{center}
\begin{tabular}{lclcl}
Transition type &&   && Rate\\\hline
Compartment inflow && $n\mapsto n+e_x$ && $\kappa_I\mu(x)$\\[1ex]
Compartment exit && $n\mapsto n-e_x$ && $\kappa_En_x$\\[1ex]
Compartment coagulation, $x\ne y$ && $n\mapsto n+e_{x+y}-e_x-e_y$ && $\kappa_C n_xn_y$\\[1ex]
Compartment coagulation && $n\mapsto n+e_{2x}-2e_x$ && $\kappa_C \displaystyle\binom{n_x}{2}$\\[1ex]
Compartment fragmentation  && $n\mapsto n-e_{x+y}+e_x+e_y$ && $\kappa_F n_{x+y}\varphi(x+y,x)$\\
($x=y$ allowed here)\\[1ex]
Internal reaction $r \in \Reac$ && $n\mapsto n-e_x+e_{x+\nu_r'-\nu_r}$ && $n_x \kappa_r \displaystyle\binom{x}{\nu_r}$
\end{tabular}
\end{center}
where
\[
\varphi(z,x):=\prod_{k=1}^d\binom{z_k}{x_k}2^{-z_k}
\]
so that the distribution of the resulting compartments after a fragmentation is independently binomial in each species. Note that each row mentioning $x$ or $y$ corresponds to an infinite family of transitions and in the last row $r\in \Reac$ also ranges over all reactions of the reaction network $\chem$.
\end{lemma}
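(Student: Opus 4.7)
My plan is to lump the simulation representation $\Fsim$ down to $N$ to obtain the Markov property, then read off each row of the table by computing, for each type of $\Fsim$-transition, the contribution to the rate of the indicated change in $N$. By Lemma \ref{lemma:identical transition times}, $N$ and $\Fsim$ jump simultaneously. The total outgoing rate from $\Fsim(t)$ depends on $\Fsim(t)$ only through $N(t)$: the compartment-level contribution $\kappa_I+\kappa_E M_C+\kappa_F M_C+\kappa_C\binom{M_C}{2}$ depends only on $M_C=\norm{N}_{\ell^1}$, and the internal-reaction contribution $\sum_{r,j}\lambda_r(X^i_j)$ equals $\sum_{r,x}N_x\lambda_r(x)$. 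Moreover every update rule from Section \ref{sec:simulation}---uniform sampling of compartments for exit, coagulation, and fragmentation; independent binomial splits; inflow from $\mu$; per-compartment mass-action reactions---is invariant under permutations of compartment labels, so the conditional distribution of the post-jump value of $N$ given $\Fsim(t)$ depends on $\Fsim(t)$ only through $N(t)$. Together these two facts give that $N$ is a continuous-time Markov chain on $\CoarseStateSpace$.

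Next I would walk through the table row by row. Inflow at rate $\kappa_I$ adds a compartment sampled from $\mu$, giving rate $\kappa_I\mu(x)$ for $n\mapsto n+e_x$. Exit at rate $\kappa_E M_C$ removes a uniformly chosen compartment, which has state $x$ with probability $n_x/M_C$, producing rate $\kappa_E n_x$. For coagulation, at rate $\kappa_C\binom{M_C}{2}$ an unordered pair of compartments is chosen, and this pair has type $\{x,y\}$ with probability $n_x n_y/\binom{M_C}{2}$ when $x\ne y$ and with probability $\binom{n_x}{2}/\binom{M_C}{2}$ when $x=y$. For fragmentation, the chosen compartment has state $z=x+y$ with probability $n_z/M_C$, and the per-species binomial splits produce one daughter in state $x$ with probability $\varphi(z,x)$. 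Internal reactions follow \eqref{eq:massaction}: reaction $r$ fires in some state-$x$ compartment at rate $n_x \kappa_r\binom{x}{\nu_r}$, inducing the change $n\mapsto n-e_x+e_{x+\nu_r'-\nu_r}$.

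The one place where care is required is the fragmentation row, since the transition $n\mapsto n-e_{x+y}+e_x+e_y$ arises both from the labeled split $(x,y)$ and from $(y,x)$ when $x\ne y$, while the symmetry $\varphi(z,x)=\varphi(z,z-x)$ keeps the two contributions equal. Following the same convention as in the mass-action rate formula \eqref{eq:massaction}, the listed rate is understood as attached to each labeled way of producing the transition, with the true rate obtained by summing over labels yielding the same state change; I would make this explicit so that the reader can check that the resulting total matches $\kappa_F M_C\cdot(n_z/M_C)$ times the probability of the appropriate unordered binomial outcome. With that convention fixed, the remaining verifications are the direct counts above.
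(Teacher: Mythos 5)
Your proposal is correct and takes essentially the same route as the paper, which likewise obtains the Markov property and the rates from the infinitesimal behavior of $\Fsim$ (the paper verifies only the coagulation row with $x\ne y$ explicitly and states that the other rows follow similarly, with finite support of $N$ coming from Lemma \ref{lemma:sim-rep}). Your explicit treatment of permutation invariance for the lumping step and of the labeled-versus-unlabeled counting convention in the fragmentation row simply fills in details the paper leaves implicit, and both are consistent with the stated table.
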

\begin{proof}
The fact that $N$ has finite support follows from the fact that $\Fsim$ is always a finite tuple, proved in Lemma \ref{lemma:sim-rep}.

The fact that $N$ is Markovian with the rates given follows from consideration of the infinitesimal behavior of $\Fsim$. For example, for $x\ne y \in \ZZ^d_{\ge 0}$, 
\begin{align*}
    \PP(N(t+h) = n+e_{x+y}-e_x - e_y | N(t) = n) &= \kappa_C n_x n_y h + o(h), \text{ as $h\to 0$},
\end{align*}
since, to leading order, the probability that some compartment in state $x$ merges with a compartment in state $y$ in the time interval $[t,t+h)$ is $\kappa_C n_x n_yh$.  The other rows of the table follow similarly.
\end{proof}

\begin{example}\label{Ex: Intro compartment model}
Consider the following  possible compartment model:
\begin{equation*}
\begin{tikzcd}
    0\arrow[yshift=.7ex]{r}{\kappa_b}& S\arrow[yshift=-.7ex]{l}{\kappa_d} &&
    0\arrow[yshift=.7ex]{r}{\kappa_I}& C\arrow[yshift=-.7ex]{l}{\kappa_E}&
    2C\arrow[swap]{l}{\kappa_C}&&
    \frac12\delta_5+\frac12\delta_{17}
\end{tikzcd}
\end{equation*}
Here we are keeping track of some chemical $S$ which forms with rate $\kappa_b$ and degrades with rate $\kappa_d$. Compartments are allowed to enter with rate $\kappa_I$, and new compartments that enter this way have either $5$ or $17$ molecules of $S$, each with probability $1/2$. Compartments can also exit with rate constant $\kappa_E$, and merge (or coagulate) with rate constant $\kappa_C$. Since there is only one species, the state space for the chemistry is $\ZZ_{\ge0}^1=\ZZ_{\ge0}$. As we detail below, we will be assuming mass-action kinetics; in this case that means when the model is in state $n\in\CoarseStateSpace$ the transition rates are given by
\begin{center}
\begin{tabular}{lclcl}
Transition type &&   && Rate\\\hline
Compartment inflow && $n\mapsto n+e_5$ && $\kappa_I/2$\\[1ex]
Compartment inflow && $n\mapsto n+e_{17}$ && $\kappa_I/2$\\[1ex]
Compartment exit && $n\mapsto n-e_x$ && $\kappa_En_x$\\[1ex]
Compartment coagulation ($x\ne y$) && $n\mapsto n+e_{x+y}-e_x-e_y$ && $\kappa_C n_xn_y$\\[1ex]
Compartment coagulation && $n\mapsto n+e_{2x}-2e_x$ && $\kappa_C \displaystyle \binom{n_x}{2}$\\[1ex]
$S$ birth && $n\mapsto n-e_x+e_{x+1}$ && $\kappa_b n_x$\\[1ex]
$S$ death && $n\mapsto n-e_x+e_{x-1}$ && $\kappa_d n_x x$
\end{tabular}
\end{center}
As before, each row mentioning $x$ or $y$ corresponds to an infinite family of transitions, one for each $x\ne y\in\ZZ^d_{\ge 0}$, and as always $e_x$ is the unit vector in direction $x$. \hfill $\triangle$
\end{example}

\section{Non-Explosivity}
\label{sec:explosivity}
A Markov Chain is explosive if it can undergo infinitely many transitions in finite time. %In the context of this paper, with finitely many species and finitely many compartments at any given time, infinitely many reactions can only occur if either the number of compartments or the number of some species goes to infinity in finite time. 
The formal definition is below \cite{Norris_1997}.

\begin{defn}[Explosivity]
Let $\{X(t)\}_{t\ge0}$ be a continuous-time Markov chain with countable state space $\mathbb S$. For each $m\in\ZZ_{\ge0}$, let $\tau_m$ be the time of the $m$-th transition of $X$ (formally, $\tau_0=0$ and $\tau_m=\inf\{t>\tau_{m-1}:X(t)\ne X(\tau_{m-1})\}$), and let $\tau_\infty =  \lim_{m\to \infty} \tau_m$. We say that $X$ \textit{explodes} if $\tau_\infty<\infty$. If there is some state $x\in \mathbb S$ such that with positive probability $X$ explodes when started in state $x$, we say that $X$ is \textit{explosive}.
\end{defn}

We will show that explosivity for the RNIC model $\whole=(\chem_\Rate,\compart_\Rate,\mu)$ is determined by explosivity for the internal reaction network $\chem_\Rate$.   But to even talk about explosivity for $\whole$ instead of just the Markov chains $\Fsim$ or $N$, we need the following simple proposition.

\begin{prop}
Suppose we have a RNIC $\whole=(\chem_\Rate,\compart_\Rate,\mu)$. Let $\Fsim$ and $N$ be the corresponding simulation and coarse-grained representations. Then $\Fsim$ is explosive iff $N$ is explosive.
\end{prop}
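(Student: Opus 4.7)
The plan is to leverage Lemma \ref{lemma:identical transition times}, which already tells us that $\Fsim$ and $N$ undergo transitions at exactly the same times. Since explosivity is defined purely in terms of the sequence of jump times $\{\tau_m\}$, identifying these sequences for the two processes will force their explosion behavior to coincide.

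More concretely, I would first observe that $N$ is a deterministic (measurable) function of $\Fsim$, so the two processes can be realized on the same probability space with a common filtration, and both can be taken to share the initial time $\tau_0 = 0$. Next, I would let $\{\tau_m^{\Fsim}\}$ and $\{\tau_m^N\}$ denote the jump times of $\Fsim$ and $N$ respectively, and argue by induction on $m$ that $\tau_m^{\Fsim} = \tau_m^N$ almost surely: given equality up to index $m-1$, Lemma \ref{lemma:identical transition times} (applied on the interval $(\tau_{m-1},\infty)$) shows that the next jump of one process occurs at the same instant as the next jump of the other, since a jump of either process forces a jump of the other at that same time.

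Taking $m \to \infty$ then gives $\tau_\infty^{\Fsim} = \tau_\infty^N$ almost surely, and hence $\{\tau_\infty^{\Fsim} < \infty\} = \{\tau_\infty^N < \infty\}$ as events. Since explosivity of a chain means there exists some initial state from which this event has positive probability, and since every initial state of $\Fsim$ corresponds to a unique initial state of $N$ (and vice versa, up to the permutation of compartments which does not affect $N$), we conclude that $\Fsim$ is explosive iff $N$ is explosive.

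I do not anticipate any serious obstacle; the only mild subtlety is making sure the correspondence between initial states is handled cleanly, i.e., that for every initial condition of $N$ there is an initial condition of $\Fsim$ producing that $N$ (which is immediate: order the compartments arbitrarily), and conversely that re-orderings of $\Fsim$'s initial condition yield the same $N$ and hence the same explosion probability. This is essentially bookkeeping and follows directly from the construction in Section \ref{sec:simulation}.
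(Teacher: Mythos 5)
Your proposal is correct and follows the same route as the paper, which likewise deduces the result directly from Lemma \ref{lemma:identical transition times} (the paper's proof is a one-liner, while you spell out the induction on jump times and the correspondence of initial states). No issues.
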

\begin{proof}
This is immediate from lemma \ref{lemma:identical transition times}, which says that $\Fsim$ and $N$ transition at the  same times.
\end{proof}

In light of the proposition, we will speak merely of $\whole=(\chem_\Rate,\compart_\Rate,\mu)$ being explosive, and check the explosivity of either $\Fsim$ or $N$ depending on convenience. As it turns out, it will be most convenient to check explosivity for $\Fsim$. (Indeed, the fact that explositivity is more easily checked for $\Fsim$ is one of the major reasons for introducing $\Fsim$ in the first place.) 

\begin{theorem}
Suppose we have a RNIC $\whole=(\chem_\Rate,\compart_\Rate,\mu)$. Then $\whole$ is explosive iff $\chem_\Rate$ is explosive.
\end{theorem}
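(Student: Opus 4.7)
I will prove the two directions separately, working throughout with the simulation representation $\Fsim$ and its explicit construction from Section \ref{sec:construction}, where the Poisson processes driving compartment transitions are independent of those driving internal reactions.

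\textbf{If $\chem_\Rate$ is explosive, then $\whole$ is explosive.} By hypothesis there is some $x\in\ZZ^d_{\ge0}$ for which the internal CRN explodes with positive probability from initial state $x$; let $\tau_x$ denote its explosion time. I start $\whole$ from the state with a single compartment whose contents equal $x$. While only one compartment is present, the total rate of compartment transitions is $\alpha:=\kappa_I+\kappa_E+\kappa_F$, since the coagulation contribution $\kappa_C\binom{1}{2}=0$ vanishes. Hence the first compartment transition time $T_1$ is $\mathrm{Exp}(\alpha)$ (interpreted as $+\infty$ when $\alpha=0$) and, by the independence of the driving Poisson processes, $T_1$ is independent of $\tau_x$. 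If $\alpha=0$ the lone compartment evolves exactly as the internal CRN and explodes with positive probability. If $\alpha>0$, pick $s<\infty$ with $\PP(\tau_x\le s)>0$ (possible since $\PP(\tau_x<\infty)>0$); then $\PP(\tau_x<T_1)\ge \PP(\tau_x\le s)\,e^{-\alpha s}>0$, and on $\{\tau_x<T_1\}$ infinitely many internal reactions occur in the single compartment strictly before any compartment transition, so $\Fsim$ undergoes infinitely many transitions in finite time.

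\textbf{If $\chem_\Rate$ is non-explosive, then $\whole$ is non-explosive.} I will prove the contrapositive. Fix any initial state for $\whole$ and any $t>0$. The total number of transitions of $\Fsim$ in $[0,t]$ is the sum of the number of compartment transitions in $[0,t]$ and the number of internal reactions in $[0,t]$. The first summand is a.s. finite because $\compart_\Rate$ is a one-species mass-action stochastic reaction network, which is never explosive (this was already used in the proof of Lemma \ref{lemma:sim-rep}, citing \cite{Xu_Hansen_Wiuf_2022}). Hence the jump times $0=T_0<T_1<T_2<\cdots$ of $M_C$ satisfy $T_i\to\infty$ a.s., and only finitely many of them lie in $[0,t]$. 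On each inter-jump interval $[T_i,T_{i+1}\MIN t]$ the $M_C(T_i)$ compartment contents evolve, by construction, as conditionally independent copies of the internal CRN $\chem_\Rate$ started from their respective states at time $T_i$. By the assumed non-explosivity of $\chem_\Rate$ from every initial state, each such copy undergoes only finitely many reactions on a finite-length interval, a.s. Summing finitely many a.s.-finite quantities over the $M_C(T_i)$ compartments and the finitely many relevant $i$ yields an a.s.-finite total, so $\Fsim$ is non-explosive.

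\textbf{Where the care goes.} There is no deep obstacle; the one point that deserves attention is verifying that on each interval $[T_i,T_{i+1}\MIN t]$ the individual compartments really do behave as conditionally independent copies of $\chem_\Rate$. This is exactly what the construction in Section \ref{sec:construction} delivers: the unit-rate Poisson processes $\{Y_r\}_{r\in\Reac}$ driving internal reactions are independent of the Poisson processes $Y_I,Y_E,Y_F,Y_C$ and of the auxiliary uniform variables used to carry out compartment updates, while the initial condition of each $X^{i+1}_j$ at $T_{i+1}$ is set deterministically from the state just before $T_{i+1}$. With that in hand, both implications follow from the first-jump comparison in one direction and a finite-sum bookkeeping argument in the other.
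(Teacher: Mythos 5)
Your proposal is correct and follows essentially the same route as the paper's proof: in the forward direction you start with a single compartment in the exploding state and use independence of the compartment-transition clock from the internal dynamics to get positive probability of explosion before the first compartment event (the paper phrases this via a fixed finite time $t$ rather than comparing $\tau_x$ with $T_1$, but the argument is the same); in the reverse direction you use non-explosivity of the one-species compartment network and finite bookkeeping over the inter-jump intervals exactly as the paper does.
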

\begin{proof}
First, suppose that $\chem_\Rate$ is explosive. As discussed above, we intend to show that $\Fsim$ is explosive. By assumption, there is some $x\in\ZZ^d$ such that when the Markov chain corresponding to $\chem_\Rate$ is started in state $x$ it explodes with positive probability. In particular, there is some finite (nonrandom) time $t$ so that the chemistry undergoes infinitely many transitions before time $t$ with positive probability. Start $\Fsim$ in the state with one compartment whose state is $x$. With positive probability, no compartment transitions happen before time $t$. But the compartment transition times are independent of what is happening inside them by construction, and the compartment evolves according to $\chem_\Rate$, so on the event that no compartment transition happens before time $t$ the compartment undergoes infinitely many transitions before time $t$ with positive probability. It follows that $\Fsim$ is explosive.

Conversely, suppose that $\chem_\Rate$ is not explosive. Note that $\compart$, the compartment network, is not explosive for any choice of rate constants (see e.g.~\cite{Xu_Hansen_Wiuf_2022}). So with probability one $\Fsim$ undergoes only finitely many compartment transitions in finite time. But between each pair of consecutive compartment transitions there are finitely many compartments each evolving according to $\chem_\Rate$, and by assumption each of these undergoes only finitely many reactions in finite time a.s.. It follows that $\Fsim$ undergoes only finitely many transitions total in finite time, and hence is not explosive.
\end{proof}

\section{Transience, recurrence, and positive recurrence}
\label{sec:recurrenc}

The following definitions are standard.  For example, see \cite{Norris_1997}.

\begin{defn}
Let $M$ be a Markov chain with countable state space $\mathbb S$, and for $x\in\mathbb S$ let $T_x=\inf\{t>0:M_t=x\text{ but }\exists s\in[0,t], M_s\ne x\}$ be the first time the process returns to $x$ (or just arrives at $x$, if the process does not start from $x$). If $\PP_x(T_x<\infty)=1$, we say that the state $x$ is \emph{recurrent}, and if $\EE_x(T_x)<\infty$ we say that the state $x$ is \emph{positive recurrent}. A state which is not recurrent is called \emph{transient}, and a recurrent state which is not positive recurrent is \emph{null recurrent}. If $\PP_x(T_y<\infty)>0$ we say that $y$ is \emph{reachable} from $x$. If every state $x\in \mathbb{S}$  is positive recurrent, null recurrent, or transient, we say $M$ is \emph{positive recurrent}, \emph{null recurrent}, or \emph{transient}, respectively.
\end{defn}

A standard fact about (positive) recurrence is that it is a class property:
\begin{prop}[Theorems 3.4.1(iv) and 3.5.3(i)$\iff$(ii) in \cite{Norris_1997}]
Suppose that $y$ is reachable from $x$ and $x$ is recurrent (resp.\ positive recurrent). Then $y$ is recurrent (resp.\ 
positive recurrent).
\end{prop}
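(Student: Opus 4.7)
The plan is to treat this as the classical class-property result for Markov chains, as the statement itself flags by citing Norris. The overall strategy for both clauses is to first reduce to the embedded discrete-time jump chain (recurrence and non-explosion make this reduction clean) and then to exploit either the Green function or a stationary measure.

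For recurrence, I would begin by upgrading one-sided reachability into mutual reachability: if $y$ is reachable from $x$ but $x$ is not reachable from $y$, then with positive probability the chain started at $x$ reaches $y$ and then never returns to $x$, contradicting $\PP_x(T_x<\infty)=1$. Letting $G(x,y)=\sum_n p_{xy}^{(n)}$ denote the Green function of the embedded chain, recurrence of $x$ says $G(x,x)=\infty$. Choosing integers $a,b$ with $p_{xy}^{(a)}, p_{yx}^{(b)}>0$, the Chapman--Kolmogorov bound $p_{yy}^{(b+n+a)}\ge p_{yx}^{(b)} p_{xx}^{(n)} p_{xy}^{(a)}$, summed over $n$, yields $G(y,y)=\infty$, so $y$ is recurrent.

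For positive recurrence, the cleanest route is via a stationary distribution: positive recurrence of $x$ gives an invariant distribution $\pi$ with $\pi(x)>0$, supported on the communicating class of $x$; global balance combined with reachability of $y$ from $x$ then forces $\pi(y)>0$, and the classical identity $\EE_y[T_y]=1/(\pi(y)\,q(y))$, where $q(y)$ is the total exit rate from $y$, delivers $\EE_y[T_y]<\infty$. I do not expect any genuine obstacles here; the only bookkeeping subtlety is to confirm that recurrence of the continuous-time chain agrees with recurrence of its embedded jump chain and that positive recurrence itself rules out explosion. Since everything reduces to results already worked out in Norris, the final write-up can plausibly be no more than a short sketch or even just the citation already present in the statement.
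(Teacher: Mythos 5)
The paper gives no proof of this proposition at all---it is stated purely as a citation to Norris---so there is nothing internal to compare against; your sketch is a correct reconstruction of the standard textbook arguments (the Chapman--Kolmogorov/Green-function bound for recurrence and the invariant-distribution identity $\EE_y[T_y]=1/(\pi(y)q(y))$ for positive recurrence), which is exactly the route the cited theorems in Norris take. The one point worth making explicit in a full write-up is the preliminary step you already flag: recurrence of $x$ upgrades one-sided reachability to mutual reachability, so $x$ and $y$ lie in the same closed communicating class, which is what licenses both the choice of $a,b$ with $p_{xy}^{(a)},p_{yx}^{(b)}>0$ and the restriction of the invariant distribution to that class.
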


In other words, if you can get between $x$ and $y$ with positive probability (in both directions), then $x$ and $y$ are either both transient, both null recurrent, or both positive recurrent. So for irreducible chains (ones where you can pass between any two points of the state space with positive probability), the chain $M$ is always positive recurrent, null recurrent, or transient.

Before proceeding with the theory, we summarize the results of this section with a table. The way to read Table \ref{tab:Results Summary} is as follows:
\begin{itemize}
    \item Suppose we have a RNIC $(\chem_\Rate,\compart_\Rate,\mu)$, and $N$ is the associated coarse-grained model.

    \item The top row indicates possible dynamics (transient, null recurrent, or positive recurrent) for $\chem_\Rate$, the chemical model, and the left column indicates possible dynamics for $\compart_\Rate$, the compartment model. Since the possible dynamics for $N$ will turn out to depend crucially on whether the compartments can exit ($\kappa_E>0$) or not ($\kappa_E=0$), the left column is further subdivided along these lines.
    
    \item Several cells are marked ``Impossible", because $\compart_\Rate$ cannot be null recurrent if $\kappa_E=0$.

    \item The numbers inside each cell refer to the relevant theorems, lemmas, or examples that demonstrates the result. 
\end{itemize}

\begin{table}[ht]
\centering
\begin{tabular}{cc|c||c|c|c}
 &\multicolumn{2}{c}{}&\multicolumn{2}{c}{Chemistry ($\chem_\Rate$)}\\
    \multicolumn{2}{c}{}&& Transient (Trans.) & Null Recurrent (NR) & Positive Recurrent (PR)\\\cline{2-6}\noalign{\vskip\arrayrulewidth \vskip\doublerulesep}\cline{2-6}
    &\multicolumn{2}{c||}{\multirow{2}{*}{Transient}}& \multicolumn{3}{c}{$N$ must be Transient}\\
    &\multicolumn{2}{c||}{}& \multicolumn{3}{c}{Remark \ref{Rmk:Recurrence}}\\\cline{2-6}
    \multirow{8}{*}{\rotatebox[origin=c]{90}{Compartments ($\compart_\Rate$)}}&\multicolumn{1}{c|}{\multirow{4}{*}{NR}} & \multirow{2}{*}{$\kappa_E=0$} & \multicolumn{3}{c}{Impossible}\\
    &&& \multicolumn{3}{c}{Lemma \ref{Lem: Compartment Dynamics} }\\\cline{3-6}
    && \multirow{2}{*}{$\kappa_E>0$}& \multicolumn{3}{c}{$N$ must be Null Recurrent}\\
    &&& \multicolumn{3}{c}{Theorem \ref{Thm:Recurrence}}\\\cline{2-6}
    &\multicolumn{1}{c|}{\multirow{6}{*}{PR}} & \multirow{4}{*}{$\kappa_E=0$} & $N$ can be Trans.& $N$ can be Trans. & $N$ can be Trans.\\
     &&& Ex \ref{ex:both trans} & Ex \ref{Ex:Null recurrent chem} & Ex \ref{ex:bad mu}, \ref{ex:conservation law}, \ref{ex:chem positive recurrent}\\
    &&& $N$ can be PR& $N$ can be PR & $N$ can be PR\\
    &&& Ex \ref{ex:chem trans N recurrent} & Ex \ref{ex:chem null rec, N pos rec} & Ex \ref{ex:conservation law}\\\cline{3-6}
    && \multirow{2}{*}{$\kappa_E>0$}& \multicolumn{3}{c}{$N$ must be Positive Recurrent}\\
    &&& \multicolumn{3}{c}{Theorem \ref{Thm:Recurrence}}
\end{tabular}
\caption{The possibly dynamics for $N$, classified in terms of the dynamics for $\compart_\Rate$ and $\chem_\Rate$. In the above ``NR'' and ``PR'' stand for ``null recurrent'' and ``positive recurrent'', respectively, whereas ``Trans.'' stands for ``transient.''
}\label{tab:Results Summary}
\end{table}

Note that in all cases where we give an example of a recurrent $N$, the example is actually positive recurrent. We suspect that null recurrent examples will also exist, but we felt it more interesting to cover the behavioral extremes.

Moving to our theory, we begin by considering the dynamics of the compartment model of section \ref{sec:compartment}, which takes the form of a relatively simple reaction network, namely,
\begin{equation}\label{eq:compartment network}
\begin{tikzcd}
    0\arrow[yshift=.7ex]{r}{\kappa_I}& C\arrow[yshift=-.7ex]{l}{\kappa_E}\arrow[yshift=.7ex]{r}{\kappa_F}& 2C\arrow[yshift=-.7ex]{l}{\kappa_C}
\end{tikzcd}
\end{equation}
The  (positive) recurrence of this model  is already completely classified \cite{Xu_Hansen_Wiuf_2022}. We state this classification now as a lemma.

\begin{lemma}\label{Lem: Compartment Dynamics}
Consider the CRN in \eqref{eq:compartment network}.

\begin{itemize}
    \item Suppose  $\kappa_I=0$. Then $0$ is an absorbing state. If some other rate constant is non-zero then all other states are transient, whereas if all four rate constants are zero then all states are absorbing.

\item Suppose $\kappa_I>0$ and $\kappa_E > 0$. The irreducible state space is $\{0,1,2,\dots\}$ and:
\begin{itemize}
    \item If $\kappa_C>0$, then the chain is positive recurrent.
    \item If $\kappa_C=0$ but $\kappa_F<\kappa_E$, then the chain is  positive recurrent.
    \item If $\kappa_C=0$ and $\kappa_F>\kappa_E$, then the chain is transient.
    \item If $\kappa_C=0$ and $\kappa_F=\kappa_E$, then either $\kappa_I\le \kappa_E$ and the chain is null recurrent, or $\kappa_I>\kappa_E$ and the chain is transient.
\end{itemize}
\item Suppose $\kappa_I>0$ and $\kappa_E = 0$. Then all statements remain the same as in the case $\kappa_I>0$ and $\kappa_E > 0$ except the irreducible state space is now $\{1,2,\dots\}$ (and the state 0 is transient).
\end{itemize}
\end{lemma}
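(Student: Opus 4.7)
The plan is to recognize that $M_C$ is a continuous-time birth–death chain on $\ZZ_{\ge 0}$ with birth rates $\lambda_n = \kappa_I + \kappa_F n$ and death rates $\mu_n = \kappa_E n + \kappa_C \binom{n}{2}$ (with $\mu_0 = 0$), and then apply the classical criteria for birth–death chains \cite{Norris_1997}: on the irreducible class, $M_C$ is recurrent iff $\sum_{n\ge 1}\prod_{k=1}^n \mu_k/\lambda_k = \infty$, and $M_C$ is positive recurrent iff $\sum_{n\ge 1}\prod_{k=1}^n \lambda_{k-1}/\mu_k < \infty$. The identification of the irreducible class, and the absorbing picture when $\kappa_I=0$, follows from a direct inspection of which jumps are possible, so the real content lies in analyzing those two series whenever $\kappa_I>0$.

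The regimes in which some ratio is bounded away from $1$ are dispatched geometrically. When $\kappa_C>0$ we have $\lambda_{k-1}/\mu_k \to 0$, so the positive-recurrence product decays superexponentially and $M_C$ is positive recurrent. When $\kappa_C=0$ and $\kappa_F<\kappa_E$, the ratio $\lambda_{k-1}/\mu_k \to \kappa_F/\kappa_E < 1$ gives geometric decay and again positive recurrence. When $\kappa_C=0$ and $\kappa_F>\kappa_E$, the ratio $\mu_k/\lambda_k \to \kappa_E/\kappa_F < 1$ forces the recurrence series to converge, so $M_C$ is transient.

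The delicate case is $\kappa_C=0$ with $\kappa_F=\kappa_E$. Writing $\rho := \kappa_I/\kappa_E > 0$, a telescoping computation gives
$$\prod_{k=1}^n \frac{\lambda_{k-1}}{\mu_k} = \prod_{k=1}^n \frac{k-1+\rho}{k} = \frac{\Gamma(n+\rho)}{\Gamma(\rho)\,\Gamma(n+1)} \sim \frac{n^{\rho-1}}{\Gamma(\rho)},$$
$$\prod_{k=1}^n \frac{\mu_k}{\lambda_k} = \prod_{k=1}^n \frac{k}{k+\rho} = \frac{\Gamma(1+\rho)\,\Gamma(n+1)}{\Gamma(n+1+\rho)} \sim \Gamma(1+\rho)\,n^{-\rho},$$
with asymptotics from Stirling. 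The first series diverges for every $\rho>0$, so positive recurrence fails throughout this regime; the second diverges iff $\rho \le 1$, i.e.\ iff $\kappa_I \le \kappa_E$, yielding null recurrence when $\kappa_I \le \kappa_E$ and transience when $\kappa_I > \kappa_E$. The case $\kappa_I > 0, \kappa_E = 0$ is then handled by noting that $0$ is no longer reachable once left, so the irreducible class shifts to $\{1,2,\dots\}$; however, the tail behavior of both products is unchanged (the same $\lambda_{k-1}/\mu_k$ and $\mu_k/\lambda_k$ control the series), so the identical trichotomy carries over verbatim.

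The main obstacle is precisely the boundary case $\kappa_F=\kappa_E$ with $\kappa_C=0$, where the driving ratios tend to $1$ and geometric comparisons are useless; there the proof pivots on extracting the sharp polynomial rate $\pi_n \sim c\,n^{\rho-1}$ from the Gamma-function identity above and comparing against the harmonic scale to separate the null-recurrent and transient sub-regimes.
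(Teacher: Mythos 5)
Your proposal is correct in substance, but it takes a genuinely different route from the paper for the simple reason that the paper does not prove this lemma at all: it is stated as a quotation of the complete classification in the cited work of Xu, Hansen, and Wiuf, so the ``paper's proof'' is a citation. Your self-contained argument via the classical birth--death criteria is sound where it matters. The identification $\lambda_n=\kappa_I+\kappa_F n$, $\mu_n=\kappa_E n+\kappa_C\binom n2$ matches the paper's binomial mass-action convention, the geometric regimes are dispatched correctly, and the Gamma-function asymptotics in the critical case $\kappa_C=0$, $\kappa_F=\kappa_E$ correctly yield $\pi_n\sim c\,n^{\rho-1}$ (never summable) and escape products $\sim c\,n^{-\rho}$ (summable iff $\rho>1$), reproducing the stated trichotomy; what your approach buys is a transparent, checkable derivation in place of an external reference. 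Two small caveats. First, concluding positive recurrence from summability of the invariant measure for a CTMC requires either recurrence or non-explosivity as a companion fact; both hold here (the recurrence series diverges in every regime where you claim positive recurrence, and the paper separately records non-explosivity of this one-species network), but you should say so explicitly. Second, the $\kappa_I=0$ bullet does not quite follow from ``direct inspection'' as stated: if $\kappa_I=\kappa_E=0$ but $\kappa_C>0$, the class $\{1,2,\dots\}$ is closed, and state $1$ is absorbing (if $\kappa_F=0$) or the class is positive recurrent (if $\kappa_F>0$), so ``all other states are transient'' fails in these degenerate sub-cases. This is really an imprecision in the lemma as transcribed rather than in your argument, and it is harmless for the paper's downstream use (which only invokes $\kappa_I=0$ together with $\kappa_E>0$), but a complete proof should either restrict or correct that bullet rather than wave at inspection.
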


Now we begin with our positive results. The first fact is simple enough to be stated as a remark:

\begin{remark}\label{Rmk:Recurrence}
Notice that if $N$ is the course-grained representation for $\whole=(\chem_\Rate,\compart_\Rate,\mu)$ and $n$ is a (positive) recurrent state for $N$, then the number of compartments in $n$, $\norm n_{\ell^1}$, is a (positive) recurrent state for $\compart_\Rate$, since the return time to $\norm n_{\ell^1}$ is bounded by the return time to $n$.  
\end{remark}

Said succinctly, if $n$ is a positive recurrent state of the full model, then so is $\norm n_{\ell^1}$ for the compartment model. One might hope that the converse would be true, and it turns out under relatively mild assumptions it is:

\begin{theorem}\label{Thm:Recurrence}
Consider a non-explosive model $\whole=(\chem_\Rate,\compart_\Rate,\mu)$ where $\kappa_E>0$, and let $N$ be its course-grained representation. Then a state $n$ is (positive) recurrent for $N$ iff $n$ is reachable from the empty state $\vec 0$ for $N$ and the state $\norm n_{\ell^1}$ is (positive) recurrent for $\compart_\Rate$.
\begin{proof}
If $\kappa_I=0$ the conclusions of the theorem are clear, since by Lemma \ref{Lem: Compartment Dynamics} the state with no compartments is absorbing for both $N$ and $\compart_\Rate$ and all other states are transient. From here on we assume $\kappa_I>0$.

Let $M_C=\norm N_{\ell^1}$ be the number of compartments; recall that $M_C$ is a Markov chain which evolves according to $\compart_\Rate$. Suppose first that $n$ is recurrent for $N$. By Remark \ref{Rmk:Recurrence}, $\norm n_{\ell^1}$ is recurrent for $\compart_\Rate$. Since $\kappa_E>0$ and $\kappa_I>0$, by Lemma \ref{Lem: Compartment Dynamics} $\compart_\Rate$ is irreducible, so $\compart_\Rate$ eventually hits zero with probability one when started from $\norm n_{\ell^1}$. But when $M_C$ hits zero, $N=\vec 0$. Since $n$ is recurrent for $N$, it must be that $N$ eventually returns to state $n$ after hitting state $\vec 0$. This proves that $n$ is reachable from $\vec 0$ for $N$.

Now suppose that $n$ is reachable from $\vec 0$ and the state $\norm n_{\ell^1}$ is positive recurrent (resp. recurrent) for $\compart_\Rate$. Since $\compart_\Rate$ is irreducible as in the previous paragraph, it follows that zero is positive recurrent (resp. recurrent) for $\compart_\Rate$. But $N=\vec 0$ exactly when $M_C$ is $0$, so $\vec 0$ is positive recurrent (resp. recurrent) for $N$. But positive recurrence (resp. recurrence) is a class property and by assumption $n$ is reachable from $\vec 0$, so we conclude that $n$ is positive recurrent (resp. recurrent) for $N$, as desired.
\end{proof}
\end{theorem}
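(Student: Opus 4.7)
The overall strategy hinges on the elementary observation that $\{N=\vec 0\}=\{M_C=0\}$, where $M_C:=\|N\|_{\ell^1}$ is the compartment-count process, itself a Markov chain distributed as $\compart_\Rate$. This identity will let me transfer (positive) recurrence statements between the tractable one-dimensional chain $\compart_\Rate$ and the much larger coarse-grained chain $N$ through their single shared ``zero'' state, after which the class property takes care of extending the conclusion to $n$. The degenerate case $\kappa_I=0$ should be dispensed with separately at the outset: by Lemma \ref{Lem: Compartment Dynamics}, the state $0$ is absorbing for $\compart_\Rate$ and every other state transient, so $\|n\|_{\ell^1}$ being (positive) recurrent forces $n=\vec 0$, and the equivalence then holds trivially. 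Henceforth I assume $\kappa_I>0$, so that $\compart_\Rate$ is irreducible on $\ZZ_{\ge 0}$.

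For the forward direction, suppose $n$ is (positive) recurrent for $N$. Remark \ref{Rmk:Recurrence} instantly gives (positive) recurrence of $\|n\|_{\ell^1}$ for $\compart_\Rate$. For reachability of $n$ from $\vec 0$: irreducibility of $\compart_\Rate$ forces $M_C$ to visit $0$ from any initial state with probability one, and the identity above then forces $N$ to visit $\vec 0$; recurrence of $n$ for $N$ requires $N$ to return to $n$ afterwards, so $\vec 0$ must be able to reach $n$ with positive probability.

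For the backward direction, assume $n$ is reachable from $\vec 0$ and that $\|n\|_{\ell^1}$ is (positive) recurrent for $\compart_\Rate$. By the class property applied within the irreducible chain $\compart_\Rate$, the state $0$ is (positive) recurrent for $\compart_\Rate$. The identity $\{N=\vec 0\}=\{M_C=0\}$ makes the return time of $N$ to $\vec 0$ and the return time of $M_C$ to $0$ agree when both are started at the zero state, so $\vec 0$ is (positive) recurrent for $N$. A second use of the class property, now for $N$, combined with the assumed reachability of $n$ from $\vec 0$, yields (positive) recurrence of $n$.

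The only conceptual subtlety, and the part that would require the most care to write out cleanly, is justifying the two appeals to the class property: both rely on the identity $\{N=\vec 0\}=\{M_C=0\}$ and on irreducibility of $\compart_\Rate$, which is precisely where the hypothesis $\kappa_E>0$ is used. Non-explosivity of $\whole$ is invoked implicitly to ensure that the various return times are honest a.s.\ finite random variables rather than being obscured by an explosion. In particular, the hypothesis $\kappa_E>0$ is not a cosmetic regularity assumption; without it the compartment chain need not be irreducible, and one would have to replace the single state $\vec 0$ by a more delicate absorbing set or use a conservation argument.
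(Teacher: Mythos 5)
Your proposal is correct and follows essentially the same route as the paper's own proof: dispose of $\kappa_I=0$ via Lemma \ref{Lem: Compartment Dynamics}, use Remark \ref{Rmk:Recurrence} and the identity $\{N=\vec 0\}=\{M_C=0\}$ to shuttle (positive) recurrence through the shared zero state, and finish with the class property in both directions. No gaps.
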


The same theorem holds, \textit{mutatis mutandis}, for $\Fsim$. The proof is the same so we omit it.
\begin{theorem}\label{Thm:another}
Consider a non-explosive model $\whole=(\chem_\Rate,\compart_\Rate,\mu)$ where $\kappa_E>0$, and let $\Fsim$ be its simulation representation. Then a state $(x_1,\dots,x_k)$ is (positive) recurrent for $\Fsim$ iff $(x_1,\cdots,x_k)$ is reachable from the empty vector $()$ for $\Fsim$ and the state $k$ is (positive) recurrent for $\compart_\Rate$.
\end{theorem}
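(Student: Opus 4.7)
The plan is to mirror the proof of Theorem \ref{Thm:Recurrence} line by line, with the empty vector $()$ playing the role of $\vec 0$. The only substantive observation one must verify at the outset is that both directions of the argument still go through when the ``state of the full model'' is a tuple rather than a function of compartment counts; this works because the quantity $M_C = \norm{N}_{\ell^1}$, which governed everything in the $N$-proof, equals the length of the tuple $\Fsim$ by construction, and $M_C$ is itself a Markov chain evolving as $\compart_\Rate$ regardless of which representation we use.

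First I would dispense with $\kappa_I = 0$. By Lemma \ref{Lem: Compartment Dynamics}, the state $0$ is absorbing for $\compart_\Rate$ and every other state is transient. For $\Fsim$ this translates to: the empty vector $()$ is absorbing, and any tuple of positive length is transient (since the length is a lower bound on the hitting time to $0$ for $\compart_\Rate$, which is finite a.s.\ when $\kappa_I = 0$). So neither condition in the theorem can hold for a nonempty tuple, and both sides of the ``iff'' are vacuously aligned. Assume $\kappa_I > 0$ henceforth, so that by Lemma \ref{Lem: Compartment Dynamics} $\compart_\Rate$ is irreducible on its irreducible class containing $0$.

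Next, the forward direction. Suppose $(x_1,\dots,x_k)$ is recurrent (resp.\ positive recurrent) for $\Fsim$. I would first observe the $\Fsim$-analog of Remark \ref{Rmk:Recurrence}: any return of $\Fsim$ to $(x_1,\dots,x_k)$ forces $M_C$ to return to $k$, so the return time to $k$ for $\compart_\Rate$ is dominated by the return time to $(x_1,\dots,x_k)$ for $\Fsim$, hence $k$ inherits (positive) recurrence for $\compart_\Rate$. By irreducibility of $\compart_\Rate$, starting from $k$ the chain $M_C$ hits $0$ in finite time a.s.; at that moment $\Fsim = ()$. Because $\Fsim$ is recurrent at $(x_1,\dots,x_k)$ and has just visited $()$, it must subsequently return to $(x_1,\dots,x_k)$, which gives a sample path from $()$ to $(x_1,\dots,x_k)$. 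This establishes reachability.

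Finally, the reverse direction. Suppose $(x_1,\dots,x_k)$ is reachable from $()$ for $\Fsim$ and $k$ is (positive) recurrent for $\compart_\Rate$. By irreducibility of $\compart_\Rate$, the state $0$ is (positive) recurrent for $\compart_\Rate$. Since the event $\{M_C(t) = 0\}$ coincides with $\{\Fsim(t) = ()\}$, the return time to $()$ for $\Fsim$ equals the return time to $0$ for $\compart_\Rate$, so $()$ is (positive) recurrent for $\Fsim$ (with the same mean return time, which handles the positive-recurrent case). Recurrence and positive recurrence are class properties, and $(x_1,\dots,x_k)$ is by hypothesis in the communicating class of $()$; therefore $(x_1,\dots,x_k)$ is (positive) recurrent for $\Fsim$. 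The only place that required any thought beyond a transliteration is the matching ``$M_C = 0 \iff \Fsim = ()$'' identity, which is immediate from how $\Fsim$ was constructed in Section \ref{sec:simulation}; there is no real obstacle.
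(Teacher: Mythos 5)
Your proposal is correct and is essentially the paper's own argument: the paper explicitly omits the proof of this theorem, stating it is identical (\emph{mutatis mutandis}) to that of Theorem \ref{Thm:Recurrence}, and your transliteration --- using $M_C = $ length of the tuple, the identification $\{M_C = 0\} = \{\Fsim = ()\}$, and the one-directional reachability form of the class-property proposition --- matches that intended proof step for step.
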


\begin{remark}\label{rmk:PR_NMA}
Theorems \ref{Thm:Recurrence} and \ref{Thm:another} hold under more general assumptions.   Note that the key idea of both is that $0$ is (positive) recurrent for $\compart_\Rate$. Hence, one can generalize to the situation in which $\whole=(\chem_\Rate,\compart_\Rate,\mu)$ has non-mass action kinetics for either $\chem_\Rate$ or $\compart_\Rate$, so long as the system is non-explosive and $0$ is (positive) recurrent for $\compart_\Rate$.
\end{remark}

\subsection{Lyapunov Functions}

In what follows we will need to make use of the  theory of Lyapunov functions for Markov chains. This short section is devoted to introducing the extent of the theory  we will use.

The following theorem is well-known. In full generality, it is due to Meyn and Tweedie \cite{Meyn_Tweedie_1993}. The version below is a specialization to the countable state space case. For a proof of the version given below, see the later paper \cite{Anderson_Cappelletti_Kim_Nguyen_2020}.
\begin{theorem}\label{thm:lyapunov-recurrence}
    Let $X$ be a continuous-time Markov chain on a countable state space $\mathbb S$ with generator $\mathcal L$. Suppose there exists a finite set $K\subset\mathbb S$ and a positive function $V$ on $\mathbb S$ such that
    \[
    \mathcal LV(x)\le-1
    \]
    for all $x\in\mathbb S\setminus K$.  Suppose further that $V$ is ``norm-like,'' in the sense that $\{x\in\mathbb S:V(x)<B\}$ is finite for every $B>0$. Then each state in a closed, irreducible component of $\mathbb S$ is positive recurrent. Moreover, if $\tau_{x_0}$ is the time for the process to enter the union of the closed irreducible components given an initial condition $x_0$, then $\EE_{x_0}[\tau_{x_0}]<\infty$.
\end{theorem}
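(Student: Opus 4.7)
The plan is to use Dynkin's formula for $V$, combined with a localization argument to control possible explosion, in order to extract the uniform bound $\EE_x[\tau_K] \le V(x)$ on the hitting time of $K$, and then to leverage finiteness of $K$ together with irreducibility of each closed component to conclude positive recurrence.

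First, to handle the fact that $V$ may be unbounded and that the chain might in principle explode, I would localize using an increasing sequence of finite sets $B_1 \subset B_2 \subset \cdots$ with $K \subset B_1$ and $\bigcup_n B_n = \mathbb{S}$, and set $\tau_n = \inf\{t \ge 0 : X(t) \notin B_n\}$ and $\tau_K = \inf\{t \ge 0 : X(t) \in K\}$. On the bounded stopping time $t \MIN \tau_n \MIN \tau_K$ the process visits only finitely many states, so Dynkin's formula applies:
\begin{equation*}
\EE_x\bigl[V(X(t\MIN\tau_n\MIN\tau_K))\bigr] = V(x) + \EE_x\!\left[\int_0^{t\MIN\tau_n\MIN\tau_K} \mathcal{L}V(X(s))\, ds\right].
\end{equation*}
On $[0,\tau_K)$ the chain is in $\mathbb{S}\setminus K$, so the integrand is at most $-1$; combined with $V\ge 0$ this yields $\EE_x[t\MIN\tau_n\MIN\tau_K] \le V(x)$ for all $t,n$. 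Note this already exploits the strength of the hypothesis: the constant upper bound $-1$ (rather than $\le 0$) is what produces a linear-in-time estimate and hence \emph{positive} recurrence rather than mere recurrence.

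Next, I would pass to the limit. Monotone convergence in $t$ and then in $n$ gives $\EE_x[\tau_\infty \MIN \tau_K] \le V(x)$, where $\tau_\infty = \lim_n \tau_n$ is the explosion time. The main obstacle is ruling out explosion before the chain reaches $K$. This is exactly where the norm-like hypothesis on $V$ enters: on the event $\{\tau_\infty \le \tau_K\}$, the process escapes every finite set before hitting $K$, so by norm-likeness $V(X(\tau_n)) \to \infty$; applying Fatou's lemma to $V(X(t\MIN\tau_n\MIN\tau_K))$ against the uniform bound $\EE_x[V(X(t\MIN\tau_n\MIN\tau_K))] \le V(x)$ forces $\PP_x(\tau_\infty \le \tau_K) = 0$. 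Hence $\tau_K \le \tau_\infty$ almost surely, and $\EE_x[\tau_K] \le V(x) < \infty$ for every $x \in \mathbb{S}$.

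Finally, I would deduce both conclusions of the theorem. Let $\mathcal{C}$ denote the union of the closed irreducible components. From any $x_0$ the chain reaches $K$ in expected time at most $V(x_0)$; from states in $K\cap\mathcal{C}$ the entry time into $\mathcal{C}$ is zero, while from the remaining (transient) states of $K$ only finitely many excursions away from $K$ can occur before absorption, and each excursion has finite expected length by the same bound applied strong-Markov-style at each return to $K\setminus\mathcal{C}$. Since $K$ is finite, this gives $\EE_{x_0}[\tau_{x_0}] < \infty$. For positive recurrence, fix any state $y$ in a closed irreducible component $\mathcal{C}_i$. Starting from $y$, the bound $\EE_y[\tau_K] \le V(y)$ shows the chain hits the finite set $K\cap\mathcal{C}_i$ in finite expected time; irreducibility of $\mathcal{C}_i$ then gives a finite expected return time from $K\cap\mathcal{C}_i$ back to $y$, and combining these yields $\EE_y[T_y] < \infty$, as desired.
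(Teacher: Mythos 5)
The paper does not actually prove Theorem \ref{thm:lyapunov-recurrence}; it imports it from the literature, citing \cite{Meyn_Tweedie_1993} for the general version and \cite{Anderson_Cappelletti_Kim_Nguyen_2020} for the countable-state specialization, so there is no in-paper proof to compare against. Your argument is the standard Foster--Lyapunov proof given in those references, and its core is sound: localizing Dynkin's formula over an exhausting sequence of finite sets, using $\mathcal LV\le -1$ off $K$ together with $V>0$ to obtain $\EE_x[t\MIN\tau_n\MIN\tau_K]\le V(x)$, and then invoking norm-likeness plus Fatou to rule out escape to infinity (in particular explosion) before $\tau_K$, which upgrades the bound to $\EE_x[\tau_K]\le V(x)$ for every $x$. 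Two of your closing steps are stated more loosely than they deserve, though both repairs are routine. First, ``irreducibility of $\mathcal C_i$ then gives a finite expected return time'' is not literally true---irreducibility alone gives nothing; what you need is the standard argument that a finite set $K\cap\mathcal C_i$ with uniformly bounded expected return time to itself (one mean holding time out of $z$ plus $\sum_w \frac{q(z,w)}{q(z)}V(w)$, finite because $\mathcal LV(z)$ is well defined) forces positive recurrence of every state in the class, e.g.\ via the embedded chain on $K\cap\mathcal C_i$. Second, ``finitely many excursions a.s., each with finite expected length'' does not by itself yield a finite expected total time; you need that the expected \emph{number} of visits to the finite transient set $K\setminus\mathcal C$ is finite (geometric return probabilities) together with a uniform bound on the conditional expected excursion lengths, followed by a Wald-type summation. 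With those two points spelled out, the proof is complete and matches the cited references' approach.
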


We will also need the following, which provides a method to check for transience.

\begin{theorem}\label{thm:lyapunov-transience}
    Let $X$ be a non-explosive continuous-time Markov chain on a countable discrete state space $\mathbb S$ with generator $\mathcal L$. Let $B\subset\mathbb S$, and let $\tau_B$ be the time for the process to enter $B$. Suppose there is some bounded function $V$ such that for all $x\in B^c$,
    \[
    \mathcal LV(x)\ge0.
    \]
    Then $\PP_{x_0}(\tau_B<\infty)<1$ for any $x_0$ such that
    \[
    \sup_{x\in B}V(x)<V(x_0).
    \]
\end{theorem}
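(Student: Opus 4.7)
The plan is to argue by contradiction: assume $\PP_{x_0}(\tau_B<\infty)=1$, and use the submartingale-like property of $V(X(t\wedge \tau_B))$ to derive an inequality that contradicts $V(x_0)>\sup_B V$. The key tool will be Dynkin's formula applied to the stopped process, which is available here because $X$ is non-explosive and $V$ is bounded.

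More concretely, I would first invoke Dynkin's formula to obtain
\[
\EE_{x_0}[V(X(t\wedge\tau_B))] \;=\; V(x_0) + \EE_{x_0}\!\left[\int_0^{t\wedge\tau_B}\!\mathcal LV(X(s))\,ds\right].
\]
For every $s<\tau_B$ we have $X(s)\in B^c$, hence $\mathcal LV(X(s))\ge 0$ by hypothesis, so the integral term is nonnegative. This yields $\EE_{x_0}[V(X(t\wedge\tau_B))]\ge V(x_0)$ for all finite $t\ge 0$.

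Next I would split the left-hand expectation along $\{\tau_B\le t\}$ and $\{\tau_B>t\}$. On $\{\tau_B\le t\}$, $V(X(t\wedge\tau_B))=V(X(\tau_B))\le \sup_{x\in B} V(x)$. On $\{\tau_B>t\}$, $V(X(t\wedge\tau_B))\le \sup_{x\in\mathbb S}V(x)=:V^*$, which is finite because $V$ is bounded. Combining,
\[
V(x_0) \;\le\; \Bigl(\sup_{x\in B}V(x)\Bigr)\PP_{x_0}(\tau_B\le t) \;+\; V^*\,\PP_{x_0}(\tau_B>t).
\]
Since $V^*<\infty$, we may let $t\to\infty$ and use monotone convergence on each indicator to obtain
\[
V(x_0) \;\le\; \Bigl(\sup_{x\in B}V(x)\Bigr)\PP_{x_0}(\tau_B<\infty) \;+\; V^*\,\PP_{x_0}(\tau_B=\infty).
\]
If $\PP_{x_0}(\tau_B<\infty)=1$, the right-hand side collapses to $\sup_{x\in B}V(x)$, contradicting $\sup_{x\in B}V(x)<V(x_0)$. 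Hence $\PP_{x_0}(\tau_B<\infty)<1$.

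The only point requiring genuine care is the application of Dynkin's formula to the stopped process: for a non-explosive chain on a countable state space and a bounded $V$, the martingale $V(X(t))-V(X(0))-\int_0^t \mathcal LV(X(s))\,ds$ is well-defined and has bounded quadratic variation on bounded time intervals, and optional stopping at the bounded stopping time $t\wedge\tau_B$ gives exactly the displayed identity. Boundedness of $V$ is essential here, both to justify this identity and to ensure $V^*<\infty$ in the passage to the limit. Everything else is a direct computation.
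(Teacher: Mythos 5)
Your overall strategy is the same as the paper's (Dynkin's formula, nonnegativity of the drift term off $B$, then splitting the expectation at $\tau_B$ and passing to the limit), but there is a genuine gap at exactly the step you wave at in your last paragraph. You apply Dynkin's formula directly to the infinite-state chain and justify it by asserting that the local martingale $M_t = V(X_t)-V(X_0)-\int_0^t\mathcal LV(X_s)\,ds$ ``has bounded quadratic variation on bounded time intervals.'' That is false in general: the quadratic variation of $M$ on $[0,t]$ is a sum of squared jumps of $V(X)$ and is controlled only by the number of jumps of $X$ in $[0,t]$, which for a non-explosive chain is a.s.\ finite but can have infinite expectation (e.g.\ a pure birth chain with rates $q_k=k\log k$). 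Likewise, boundedness of $V$ does not make $\mathcal LV$ bounded, and $\EE_{x_0}\!\int_0^{t\wedge\tau_B}|\mathcal LV(X_s)|\,ds$ can be infinite, so the displayed identity is not even obviously well posed. This is precisely the point the paper flags: its Dynkin lemma is stated only for \emph{finite} state spaces, and the bulk of its proof consists of truncating $\mathbb S$ to finite sets $\mathbb S_m$, sending the killed chain to a cemetery state $\Delta$ with $W(\Delta)=\sup W$, checking $\mathcal LW\le\mathcal L_mW$ on $\mathbb S_m$, applying Dynkin at the bounded time $\tau_B\wedge\tau_m\wedge m$, and only then letting $m\to\infty$ using non-explosivity ($\tau_m\to\infty$).

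Your key inequality $\EE_{x_0}[V(X_{t\wedge\tau_B})]\ge V(x_0)$ is in fact true, and your derivation of the contradiction from it is clean and correct; but to establish it you must localize. Either reproduce the paper's truncation, or argue that $V(X_{t\wedge\tau_B})$ is a local submartingale (localize by the exit times $\tau_m$ from finite sets, where Dynkin is unproblematic) that is bounded above, so that a conditional Fatou argument applied to $\|V\|_\infty - V(X_{t\wedge\tau_B})$ upgrades it to a true submartingale. As written, the proposal asserts the hardest step with an incorrect justification, so it does not yet constitute a proof.
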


For a version of the theorem above that applies in much greater generality, see Theorem 3.3(i) in \cite{Stramer_Tweedie_1994}. Our theorem is not an immediate corollary of theirs (they define restricted versions of the chain $X$ and state their theorem in terms of the generators of the restricted processes), so we will provide a proof of Theorem \ref{thm:lyapunov-transience} in the appendix.

\subsection{Instructive examples}
We now consider some examples. The first is an application of Theorem \ref{Thm:Recurrence}, and the rest show the various ways the conclusion of the theorem can fail if the hypothesis $\kappa_E>0$ is not satisfied. These examples also serve to illustrate various techniques that are useful for analysing recurrence and transience of RNIC models. In Example \ref{ex:chem trans N recurrent}, positive recurrence for the RNIC is shown via a Lyapunov function, applying Theorem \ref{thm:lyapunov-recurrence}. In Example \ref{Ex:Null recurrent chem}, transience for the RNIC is shown via a Lyapunov function, applying Theorem \ref{thm:lyapunov-transience}. And in Example \ref{ex:chem positive recurrent}, transience for the RNIC is shown with the help of the construction of $\Fsim$ given in section \ref{sec:construction}.

In the following, any rate constants not specified are assumed to be positive.

\begin{example}\label{ex:Apply Thm}
Consider the following RNIC.
\begin{center}
\begin{tikzcd}
    0\arrow{r}{\kappa_b}& 2S & &
    0\arrow[yshift=.7ex]{r}{\kappa_I}& C\arrow[yshift=-.7ex]{l}{\kappa_E}& 2C\arrow{l}{\kappa_C} & &
    \delta_0
\end{tikzcd}
\end{center}
where $\delta_0$ is the point mass at zero (so each compartment enters empty). Even though $\chem_\Rate$ is transient, by Theorem \ref{Thm:Recurrence} the empty state is positive recurrent for $N$. Any state where every compartment has an even number of $S$ molecules is reachable from the empty state, hence positive recurrent. Any state where any compartment has an odd number of $S$ molecules is not reachable from the empty state, hence transient.\hfill $\triangle$
\end{example}

In all of the remaining examples in this section, we have $\kappa_E=0$ and hence the state $0$ will be transient for $\compart_\Rate$. Hence, when discussing the properties of the model we restrict ourselves to the state space $\CoarseStateSpace\setminus\{0\}$ that does not include the state with zero compartments.

The case where $\kappa_E=0$ is more complicated than the $\kappa_E\ne0$ case. For one thing, it is no longer enough just to look at $\compart_\Rate$ to decide if all states are transient. Indeed, if Example \ref{ex:Apply Thm} is modified so that $\kappa_E=0$ then every state becomes transient, despite the fact that all states are positive recurrent for the compartment network $\compart_\Rate$:

\begin{example}\label{ex:both trans}
Consider the model $\whole=(\chem_\Rate,\compart_\Rate,\mu)$ described by
\begin{equation}
\label{eq:both trans}
\begin{tikzcd}
    0\arrow{r}{\kappa_b}& 2S & &
    0\arrow{r}{\kappa_I}& C & 2C\arrow[swap]{l}{\kappa_C} & &
    \delta_0
\end{tikzcd}
\end{equation}
where $\delta_0$ is again the point mass at zero.

We reiterate that this is exactly the same as the previous example but with $\kappa_E$ set to zero. However, that is enough to make every state transient for $\whole$:

\begin{prop}
In the RNIC model \eqref{eq:both trans}, $\chem_\Rate$ is transient, $\compart_\Rate$ is positive recurrent on the irreducible state space $\{1,2,\dots\}$, and $N$ (the coarse-grained model corresponding to $\whole$) is transient.
\end{prop}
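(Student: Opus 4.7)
The first two claims are immediate from earlier results. The internal network $\chem_\Rate$ has only the reaction $0\to 2S$, so the count of $S$ is strictly monotone increasing along every trajectory and no state can ever be revisited; hence $\chem_\Rate$ is transient. For $\compart_\Rate$ we are in the regime $\kappa_I,\kappa_C>0$, $\kappa_E=\kappa_F=0$, so Lemma \ref{Lem: Compartment Dynamics} gives positive recurrence on the irreducible class $\{1,2,\dots\}$.

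For the main assertion on $N$, my plan is to exhibit a state function whose value diverges almost surely, and then observe that a fixed state cannot be visited infinitely often. Define
\[
T(n):=\sum_{x\in\ZZ_{\ge0}} x\,n_x,
\]
the total number of $S$ molecules present across all compartments in state $n$. Reading off the coarse-grained transition table (specialized to this model as in Example \ref{Ex: Intro compartment model}), every compartment transition preserves $T$: inflows add an empty compartment because $\mu=\delta_0$, there are no exits since $\kappa_E=0$, and both coagulation types $n\mapsto n+e_{x+y}-e_x-e_y$ and $n\mapsto n+e_{2x}-2e_x$ leave $\sum_x x\,n_x$ unchanged. The only transitions that move $T$ are the internal births $0\to 2S$, each of which increases $T$ by exactly $2$ and fires at total rate $\kappa_b M_C(t)$, where $M_C(t)=\norm{N(t)}_{\ell^1}$ is the current number of compartments.

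The rest is a short argument. Because $\kappa_E=0$, compartments are never destroyed, so starting from any state $n_0$ with $M_C(n_0)\ge 1$ we have $M_C(t)\ge 1$ for every $t\ge0$. Using a Kurtz random-time-change representation with a unit-rate Poisson process $Y_b$ (independent of the compartment clocks),
\[
T(N(t))=T(N(0))+2\,Y_b\!\left(\kappa_b\int_0^t M_C(s)\,ds\right),
\]
and the internal clock satisfies $\kappa_b\int_0^t M_C(s)\,ds\ge \kappa_b t\to\infty$. Therefore $T(N(t))\to\infty$ almost surely. Since $T$ depends only on the state, once $T(N(t))>T(n_0)$ the process can never return to $n_0$, so $n_0$ is visited only finitely often almost surely, and every state of $N$ is transient.

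The only delicate point is isolating the right monotone quantity and seeing that $\kappa_E=0$ is precisely what keeps $M_C$ bounded away from zero and thereby forces the conserved-except-for-births quantity $T$ to grow without bound. Once this observation is made, the argument avoids any explicit Lyapunov construction and reduces transience of $N$ to the divergence of a Poisson-driven integral.
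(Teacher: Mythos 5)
Your proof is correct and follows essentially the same route as the paper: the paper's own argument is precisely that the total number of $S$ molecules across all compartments can never shrink and grows at rate at least $\kappa_b$ (since $\kappa_E=0$ keeps $M_C\ge 1$), forcing transience of every state of $N$. You have simply written out the details the paper leaves implicit, via the random-time-change representation of the divergent birth count.
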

\begin{proof}
Except for the zero-compartment state (which cannot be returned to), all states are positive recurrent for $\compart_\Rate$ by Lemma \ref{Lem: Compartment Dynamics}. However,  the total number of $S$ molecules across all compartments can never shrink, and grows with some positive rate (at least $\kappa_b$, and larger if there are more compartments), so all states are transient for $N$.
\end{proof}
Thus we see that,  in this example, the long-term behavior of $\compart_\Rate$ and the course-grained model $N$ are different.
\hfill $\triangle$
\end{example}

The above example shows that when $\kappa_E=0$ and $\chem_\Rate$ is transient, $\whole$ may be transient even if $\compart_\Rate$ is not. However, this need not always be the case. Below we have an example that demonstrates that, when $\kappa_E=0$ and $\chem_\Rate$ is transient, it is still possible for $\whole$ to be positive recurrent.

\begin{example}\label{ex:chem trans N recurrent}
Consider the model $\whole=(\chem_\Rate,\compart_\Rate,\mu)$ described by
\begin{equation}\label{eq:chem trans N recurrent}
\begin{tikzcd}
    2A+B\arrow{r}{1}&B\arrow{r}{1}&0\arrow{r}{1}& A &
    0\arrow{r}{1}& C & 2C\arrow[swap]{l}{6} &
    \delta_{(0,1)}(a,b)
\end{tikzcd}
\end{equation}
where $\delta_{(0,1)}$ is a point mass with zero $A$ molecules and one $B$ molecule.  We will show that the chemical model $\chem_\Rate$ is transient but that the course-grained model, $N$, is positive recurrent.  Intuitively, this can be understood in the following manner: $B$ should be thought of as an enzyme that degrades the substrate $A$.  Without the compartment model, the enzyme would simply disappear over time, and then the substrate would grow without bound (from the reaction $0 \to A$).  However, each compartment brings in a new enzyme allowing for the further degradation of $A$.

\begin{prop}
In the RNIC model \eqref{eq:chem trans N recurrent}, $\chem_\Rate$ is transient, $\compart_\Rate$ is positive recurrent on the irreducible state space $\{1,2,\dots\}$, and $N$ (the coarse-grained model corresponding to $\whole$) is positive recurrent.
%I believe the Lyapunov function we will construct can be modified to work for any choice of rate constants provided $\kappa_C>4\kappa_I$. I don't have a Lyapunov function that works in general.
\begin{proof}
$\compart_\Rate$ is positive recurrent by Lemma \ref{Lem: Compartment Dynamics}. $\chem_\Rate$ is transient by the discussion above.  

It just remains to check positive recurrence of $N$. For $n\in\CoarseStateSpace$, let $C(n)=\norm n_{\ell^1}=\sum_{a=0}^\infty\sum_{b=0}^\infty n_{(a,b)}$ denote the number of compartments, and let $A(n)=\sum_{a=0}^\infty\sum_{b=0}^\infty an_{(a,b)}$ and $B(n)=\sum_{a=0}^\infty\sum_{b=0}^\infty bn_{(a,b)}$ be the total number of $A$ and $B$ molecules, respectively, across all compartments. Define $V:\CoarseStateSpace\to[0,\infty)$ via
\begin{align*}
    V(n)=\begin{cases}
        A(n)+B(n)+5C(n)-1   &   B(n)\ne0\\
        A(n)+B(n)+5C(n)+7   &   B(n)=0.
    \end{cases}
\end{align*}
We claim that this is a Lyapunov function for $N$. An upper bound for $\mathcal LV(n)$, the generator applied to $V$ at $n$, is given by
\begin{align*}
    \mathcal LV(n)\le\begin{cases}
        -B(n)+7-15C(n)(C(n)-1) & B(n)\ge2\text{ and }C(n)\ge2\\
        14-15C(n)(C(n)-1) & B(n)=1\text{ and }C(n)\ge2\\
        -1-15C(n)(C(n)-1) & B(n)=0\\
        -A(n)(A(n)-1)B(n)-B(n)+7 & B(n)\ge2\text{ and }C(n)=1\\
        -A(n)(A(n)-1)+14 & B(n)=1\text{ and }C(n)=1
    \end{cases}
\end{align*}
Note that the first two rows are upper bounds and the last three rows are exact. Specifically, in the first two rows we neglected the contribution of the $2A+B\to B$ reaction --- unlike everything else it crucially depends on how the $A$ and $B$ molecules are distributed across the compartments.

We see that $\mathcal LV(n)\le-1$ for all $n$ outside a finite set of states---for instance, you could take the states where there is exactly one compartment and it has at most $7$ $B$ and at most $4$ $A$. So $V$ is indeed a Lyapunov function for $N$, and hence $N$ is positive recurrent by Theorem \ref{thm:lyapunov-recurrence}.
\end{proof}
\end{prop}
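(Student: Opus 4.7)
The plan is to verify each of the three claims separately, with the substantial work being the positive recurrence of $N$.

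For $\chem_\Rate$ being transient: $B$ appears only as a reactant in $\chem_\Rate$, so $B(t)$ is monotonically non-increasing and reaches zero in finite time almost surely. Once $B=0$, both $B\to 0$ and $2A+B\to B$ have rate zero, leaving only $0\to A$ active, which drives $A$ monotonically to infinity. Hence every state is transient. For $\compart_\Rate$ being positive recurrent on $\{1,2,\dots\}$: this is immediate from Lemma \ref{Lem: Compartment Dynamics} with $\kappa_I=1>0$, $\kappa_E=\kappa_F=0$, and $\kappa_C=6>0$.

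For positive recurrence of $N$, the plan is to construct a norm-like Lyapunov function $V$ and apply Theorem \ref{thm:lyapunov-recurrence}. Let $A(n)$ and $B(n)$ denote the total counts of $A$ and $B$ across all compartments and let $C(n)=\norm{n}_{\ell^1}$ be the compartment count. A natural first guess is $V_0(n)=A(n)+B(n)+\alpha C(n)$. Computing $\mathcal L V_0$ yields positive contributions $+C(n)$ from $0\to A$ and $+(1+\alpha)$ from compartment arrivals (each bringing one $B$ and one compartment), along with negative contributions $-B(n)$ from $B\to 0$ and $-3\alpha C(n)(C(n)-1)$ from coagulations, plus a non-positive contribution from $2A+B\to B$. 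For $\alpha=5$, the coagulation term beats $+C(n)$ whenever $C(n)\ge 2$, and the $-B(n)$ term handles large $B(n)$; the stubborn regime is $C(n)=1$ with $B(n)$ small, where coagulations vanish and arrivals alone push $V_0$ upward.

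To address this, I would modify $V_0$ by an indicator on $\{B=0\}$: set $V(n)=A(n)+B(n)+5C(n)-1$ when $B(n)\ne 0$ and $V(n)=A(n)+B(n)+5C(n)+7$ when $B(n)=0$. Under this $V$, the destruction of the final $B$ molecule (rate $1$) changes $V$ by $+7$ rather than $-1$, while a subsequent arrival into the $B=0$ state changes $V$ by $-2$ rather than $+6$, so the extra $+8$ incurred upon exhausting $B$ is paid back when the system recovers. To dispose of the remaining $C(n)=1$, $B(n)\ge 1$ case, the $2A+B\to B$ reaction fires in the single compartment at rate $\binom{A}{2}B$ and contributes $-A(A-1)B$ to $\mathcal L V$, which dominates for large $A$. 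A short case analysis over the five regimes $(B,C)\in\{(0,\ast),(1,\ge 2),(\ge 2,\ge 2),(1,1),(\ge 2,1)\}$ then shows $\mathcal L V(n)\le -1$ outside a finite set of states with $C(n)=1$ and $A(n),B(n)$ small, and Theorem \ref{thm:lyapunov-recurrence} delivers positive recurrence. The main obstacle throughout is precisely this $C=1$, $B$ small regime: no linear $V_0$ works there, because arrivals always contribute $+(1+\alpha)>0$ to $\mathcal L V_0$ while no other mechanism can counteract them; the indicator-style correction, combined with the quadratic control from $2A+B\to B$, is what closes the argument.
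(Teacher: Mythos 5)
Your proposal is correct and follows essentially the same route as the paper: the identical Lyapunov function $V(n)=A(n)+B(n)+5C(n)-1+8\cdot\II\{B(n)=0\}$, the same appeal to Theorem \ref{thm:lyapunov-recurrence}, and the same five-regime case analysis, with the per-reaction contributions to $\mathcal LV$ (the $+7$ upon destroying the last $B$, the $-2$ upon an arrival into a $B=0$ state, the $-A(A-1)B$ from $2A+B\to B$ when $C=1$, and the $-15C(C-1)$ from coagulation) all computed correctly. The only difference is expository --- you motivate the indicator correction by first showing why a purely linear $V_0$ fails in the $C=1$, $B$ small regime, and you sketch rather than tabulate the final case bounds --- but the argument is the paper's.
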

\hfill$\triangle$
\end{example}

In the previous example we saw that even when $\kappa_E=0$, positive recurrent compartments $\compart_\Rate$ can still tame transient chemistry $\chem_\Rate$. It should not be surprising, then, that positive recurrent compartments can tame null recurrent chemistry in the same manner. For the sake of filling in Table \ref{tab:Results Summary} completely, we present a modification of Example \ref{ex:chem trans N recurrent} where $\chem_\Rate$ is null recurrent instead of transient.

\begin{example}\label{ex:chem null rec, N pos rec}
Consider the model $\whole=(\chem_\Rate,\compart_\Rate,\mu)$ described by
\begin{equation}\label{eq:chem null rec, N pos rec}
\begin{tikzcd}
   \hspace{-.2in} 2A+B\arrow{r}{1}&B\arrow{r}{1}&0\arrow[yshift=.7ex]{r}{1}& A\arrow[yshift=-.7ex]{l}{1}\arrow{r}{1}&2A &\hspace{-.2in}
    0\arrow{r}{1}& C & 2C\arrow[swap]{l}{6} &\hspace{-.2in}
    \delta_{(0,1)}(a,b)
\end{tikzcd}
\end{equation}
where $\delta_{(0,1)}$ is a point mass with zero $A$ molecules and one $B$ molecule.

The verification of this example is similar enough to that of Example \ref{ex:chem trans N recurrent} that we provide only a sketch.
\begin{prop}
In the RNIC model \eqref{eq:chem null rec, N pos rec}, $\chem_\Rate$ is null recurrent on the irreducible state space $\{0,1,2,\dots\}\times \{0\}$, $\compart_\Rate$ is positive recurrent on the irreducible state space $\{1,2,\dots\}$, and $N$ (the coarse-grained model corresponding to $\whole$) is positive recurrent.
\begin{proof}[Proof Sketch]
    Similarly to Example \ref{ex:chem trans N recurrent}, $\compart_\Rate$ is positive recurrent and $\chem_\Rate$ is eventually reduces (after all the $B$ molecules degrade) to the network
    \begin{center}
    \begin{tikzcd}
        0\arrow[yshift=.7ex]{r}{1}& A\arrow[yshift=-.7ex]{l}{1}\arrow{r}{1}&2A.
    \end{tikzcd}
    \end{center}
    This model is null recurrent by Lemma \ref{Lem: Compartment Dynamics}.

    As for $N$, let $V$ be the very same Lyapunov function used to prove positive recurrence in Example \ref{ex:chem trans N recurrent}. The only difference between this example and that one is the addition of the reactions $A\to0$ and $A\to 2A$. But notice that the contribution of $A\to0$ in $\mathcal LV(n)$ is $-A(n)$, and the contribution of $A\to2A$ is $A(n)$. These are equal and opposite, so $\mathcal LV(n)$ is exactly the same in this example and Example \ref{ex:chem trans N recurrent}. Thus the remainder of the proof is identical.
\end{proof}
\end{prop}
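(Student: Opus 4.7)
The plan is to verify each of the three claims separately, closely following the structure suggested by the authors. For $\chem_\Rate$, I would first note that no reaction produces $B$ molecules while $B\to 0$ has positive rate, so the count of $B$ molecules is a non-increasing process which almost surely reaches $0$ in finite time from any starting state. Once the $B$-count is zero, the only active reactions are $0\to A$, $A\to 0$, and $A\to 2A$. This reduced one-species system is structurally identical to the compartment network \eqref{eq:compartment network} (with $A$ in place of $C$), with rate constants $\kappa_I=\kappa_E=\kappa_F=1$ and $\kappa_C=0$. Applying Lemma \ref{Lem: Compartment Dynamics} in the sub-case $\kappa_C=0$, $\kappa_F=\kappa_E$, and $\kappa_I\le\kappa_E$ then yields null recurrence on the irreducible component $\{(a,0):a\ge 0\}$. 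For $\compart_\Rate$, since $\kappa_I>0$, $\kappa_C>0$, and $\kappa_E=0$, Lemma \ref{Lem: Compartment Dynamics} directly gives positive recurrence on $\{1,2,\dots\}$.

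For the positive recurrence of $N$, I would reuse verbatim the Lyapunov function $V$ from Example \ref{ex:chem trans N recurrent}, namely
\[
V(n)=A(n)+B(n)+5C(n)+\begin{cases}-1 & B(n)\ne 0\\ 7 & B(n)=0.\end{cases}
\]
The key observation is that, relative to Example \ref{ex:chem trans N recurrent}, the only new reactions here are $A\to 0$ and $A\to 2A$. Neither alters $B(n)$ or $C(n)$, and their total intensities (summed over compartments) are both equal to $A(n)$: the reaction $A\to 0$ decreases $A(n)$ by $1$ at total rate $A(n)$, contributing $-A(n)$ to $\mathcal LV(n)$, while $A\to 2A$ increases $A(n)$ by $1$ at total rate $A(n)$, contributing $+A(n)$. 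Because $B(n)$ is unchanged by both of these reactions, the piecewise additive constant in $V$ is not triggered by them, so the two contributions cancel exactly. Consequently $\mathcal LV(n)$ is identical in this example to its value in Example \ref{ex:chem trans N recurrent}, the same bound $\mathcal LV(n)\le -1$ holds outside the same finite set of states, and Theorem \ref{thm:lyapunov-recurrence} yields positive recurrence of $N$.

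The main subtlety I foresee is confirming that the piecewise structure of $V$ at $B(n)=0$ does not introduce any hidden contribution from the new reactions. Since both $A\to 0$ and $A\to 2A$ leave $B(n)$ fixed, the pre- and post-transition values of $V$ always lie on the same branch of the piecewise definition, so the cancellation is clean and no new terms enter the generator computation. Beyond this bookkeeping check, no calculation beyond what was already carried out in Example \ref{ex:chem trans N recurrent} is required, so the argument goes through essentially by quotation.
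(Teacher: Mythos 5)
Your proposal is correct and follows essentially the same route as the paper's proof sketch: reduce the chemistry to the birth--death--fragmentation network on $A$ after the $B$ molecules degrade and invoke Lemma \ref{Lem: Compartment Dynamics} for null recurrence, then reuse the Lyapunov function of Example \ref{ex:chem trans N recurrent} and observe that the contributions $-A(n)$ and $+A(n)$ of the new reactions $A\to0$ and $A\to2A$ cancel in $\mathcal LV(n)$. Your additional check that the piecewise branch of $V$ at $B(n)=0$ is never crossed by these reactions is a worthwhile bookkeeping detail the paper leaves implicit, but it does not change the argument.
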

\hfill$\triangle$
\end{example}

Examples \ref{ex:both trans} and \ref{ex:chem trans N recurrent} showed that $\whole=(\chem_\Rate,\compart_\Rate,\mu)$ can be either positive recurrent or transient when $\kappa_E=0$ and $\chem_\Rate$ is transient. The next few examples are dedicated to showing the same when $\chem_\Rate$ is recurrent. First, if new compartments enter with a huge number of molecules, it can overwhelm otherwise positive recurrent chemistry:

\begin{example}\label{ex:bad mu}
Consider the RNIC model $\whole=(\chem_\Rate,\compart_\Rate,\mu)$ described by
\begin{equation}\label{eq:113241234}
\begin{tikzcd}
    0\arrow[yshift=.7ex]{r}{\kappa_b}& S\arrow[yshift=-.7ex]{l}{\kappa_d}&& 
    0\arrow{r}{\kappa_I}& C\arrow[yshift=.7ex]{r}{\kappa_F}& 2C\arrow[yshift=-.7ex]{l}{\kappa_C} &&
    \mu,
\end{tikzcd}
\end{equation}
where $\mu$ is not yet specified.
\end{example}

\begin{prop}
Let $N$ be the coarse-grained model associated with the RNIC model \eqref{eq:113241234}. For any choice of non-negative rate constants such that $\kappa_I>0$, there is a distribution $\mu$ on the non-negative integers such that $N$ is transient.
\begin{proof}
We will show that in the case $\kappa_b = 0$, $\mu$ can be chosen so that the total number of $S$ molecules is itself a transient Markov chain. The case of $\kappa_b>0$ then immediately follows by a coupling argument. That portion of the proof is straightforward and is omitted. 

Let $M(t)$ denote the number of $S$ molecules across all compartments at time $t$. Under the assumption that $\kappa_b=0$, $M$ is a Markov chain which transitions from state $m\in\NN$ to state $m-1$ with rate $\kappa_d m$ and to state $m+j$ with rate $\kappa_I\mu(j)$.

Our plan is the following: we will recursively define an increasing sequence of integers $m_k$ for $k=1,2,3,\dots$, and define $\mu(m_k)=2^{-k}$ and $\mu(j)=0$ otherwise. For $k=2,3,4,\dots$, we will let $A_k$ denote the event that the process $M$ reaches $m_{k-1}$ before it reaches (or exceeds) $m_{k+1}$. It then suffices to show that $\sup_k \PP_{m_k}(A_k)<1/2$ to prove transience of $M$. 

Continuing, we begin by letting $m_1 = 0$.  Now suppose $m_1,\dots,m_{k-1}$ have been defined. We will show that for any $\varepsilon>0$ it is possible to pick $m_k$ so that $\PP_{m_k}(A_k)<\varepsilon$ regardless of the values chosen for $m_{k+1},m_{k+2},\dots$. To show this, we make the following observations.
\begin{enumerate}
    \item Since $M$ can only go down by one at a time, to get from $m_k$ to $m_{k-1}$ before hitting a state equal to or larger than $m_{k+1}$,  the process must visit every state $m_k,m_k-1,\cdots,m_{k-1}+1$ at least once.
    \item On the event $A_k$, during each visit to each of the states $m_{k-1}+1,\dots, m_k$ there was no transition of size $+m_{k+1}$ (for in that case the state of $M$ would would necessarily reach or exceed $m_{k+1}$).
\end{enumerate}
The probability of the process $M$ transitioning  up by $m_{k+1}$ while in state $m$ is $\frac{2^{-(k+1)}\kappa_I}{\kappa_I+\kappa_dm}$ because the total rate out of state $m$ is $\kappa_I+\kappa_dm$, and the rate of inflows of size $m_{k+1}$ in state $m$ is $\mu(m_{k+1})\kappa_I=2^{-k-1}\kappa_I$. Hence, combining the above observations we see
\begin{align*}
    \PP_{m_k}(A_k)
    &\le \prod_{m=m_{k-1}+1}^{m_k}\left(1-\frac{2^{-(k+1)}\kappa_I}{\kappa_I+\kappa_dm}\right)\\
    &\le \prod_{m=m_{k-1}+1}^{m_k}\exp\left(-\frac{2^{-(k+1)}\kappa_I}{\kappa_I+\kappa_dm}\right)
    =\exp\left(-2^{-(k+1)}\kappa_I\sum_{m=m_{k-1}+1}^{m_k}\frac{1}{\kappa_I+\kappa_dm}\right),
\end{align*}
where above we use the bound $1-x\le e^{-x}$.

If $m_{k-1}$ is fixed and we send $m_k\to\infty$ in the sum above, we get $\infty$ (it's a tail of a harmonic series). Therefore,  $\PP_{m_k}(A_k)$ can be made as small as we like by choosing $m_k$ big enough. We conclude that for appropriate choice of $m_k$, the process $M$ is transient, and hence so is $N$. 
\end{proof}

Hence, so long as $\kappa_E=0$, a distribution $\mu$ that is ``bad enough'' can cause the whole model to be transient even if the chemical model $\chem_\Rate$ is positive recurrent.
\hfill $\triangle$
\end{prop}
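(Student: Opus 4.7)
The plan is to reduce to the simpler process of the total molecule count, and then design $\mu$ to be heavy-tailed enough in a carefully chosen way that this count behaves like a transient chain. First, I would observe that the death reaction and the compartment dynamics cannot increase the total number of $S$ molecules across all compartments, while the birth reaction $0 \to S$ and compartment inflow are the only sources of new molecules. A coupling argument (comparing with the $\kappa_b = 0$ case, which has strictly fewer molecules) shows it suffices to handle $\kappa_b = 0$. In that case, the process $M(t) := \sum_{x\ge 0} x\, N_x(t)$ is itself a Markov chain on $\ZZ_{\ge 0}$ whose transitions are: $m \mapsto m-1$ at rate $\kappa_d m$ (since each of the $m$ molecules is removed at rate $\kappa_d$ regardless of its compartment), and $m \mapsto m+j$ at rate $\kappa_I \mu(j)$ for each $j \ge 0$. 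Transience of $M$ implies transience of $N$, so it suffices to find $\mu$ making $M$ transient.

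The construction is to concentrate $\mu$ on a sparse sequence $0 = m_1 < m_2 < m_3 < \cdots$ with $\mu(m_k) = 2^{-k}$, where the $m_k$ are chosen recursively to be very widely spaced. Intuitively, once $M$ is at level $m_k$, we want it to be overwhelmingly likely that the next time $M$ leaves the interval $(m_{k-1}, m_{k+1})$, it exits through the top. To descend from $m_k$ to $m_{k-1}$ the chain must traverse each intermediate integer $m$ at least once (since downward jumps have size $1$), and at each such visit there is probability at least $\frac{2^{-(k+1)} \kappa_I}{\kappa_I + \kappa_d m}$ that the next transition is an upward jump of size $m_{k+1}$ (which would immediately exit the interval upward). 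The probability of never taking such a jump during the descent is therefore at most
\[
\prod_{m = m_{k-1}+1}^{m_k} \left( 1 - \frac{2^{-(k+1)} \kappa_I}{\kappa_I + \kappa_d m} \right) \;\le\; \exp\!\left( -2^{-(k+1)} \kappa_I \sum_{m = m_{k-1}+1}^{m_k} \frac{1}{\kappa_I + \kappa_d m} \right).
\]
Since the harmonic-type sum diverges as $m_k \to \infty$ with $m_{k-1}$ fixed, I can recursively pick $m_k$ (depending only on $m_1, \dots, m_{k-1}$ and on $k$) so that this upper bound is at most, say, $2^{-k-1}$, uniformly in the choice of later $m_j$'s.

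Finally, letting $A_k$ be the event that starting from $m_k$ the chain visits $m_{k-1}$ before reaching a level $\ge m_{k+1}$, the bound above gives $\sum_k \PP_{m_k}(A_k) < \infty$. Borel--Cantelli combined with the strong Markov property at successive hitting times of the levels $m_k$ then shows that, with positive probability starting from some $m_k$, the chain climbs through the $m_k$'s monotonically and never returns to any fixed state; this is transience. The main obstacle I anticipate is making this last step clean: one has to verify that the chain almost surely either drifts to $+\infty$ along the $m_k$'s or returns only finitely often to any bounded set, and in particular one has to rule out oscillation between levels by using that the choice of $m_k$ can be made uniformly good against all possible futures. Once that is arranged, transience of $M$, and thus of $N$, follows.
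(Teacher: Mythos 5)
Your proposal is correct and follows essentially the same route as the paper's proof: the same reduction to $\kappa_b=0$ and to the total molecule count $M$, the same sparse choice of $\mu$ with $\mu(m_k)=2^{-k}$, and the same product/harmonic-sum estimate for the descent probability. The only difference is cosmetic --- you close with a summable bound and Borel--Cantelli, while the paper compares the level-crossing probabilities to a biased walk via $\sup_k \PP_{m_k}(A_k)<1/2$ --- and both wrap-ups are standard.
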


In the previous example, the distribution $\mu$ of incoming compartments was unbounded. As it turns out, $\whole=(\chem_\Rate,\compart_\Rate,\mu)$ can be transient even when $\chem_\Rate$ and $\compart_\Rate$ are positive recurrent and $\mu$ is bounded. The simplest, though not only, reason this can occur is the existence of some conservation law, as the next example demonstrates. Put simply, the total amount of species $A$ and $B$ is preserved by the chemistry, so any inflow of those species, no matter how small, will overwhelm it.

\begin{example}\label{ex:conservation law}
Consider the RNIC model $\whole=(\chem_\Rate,\compart_\Rate,\mu)$ described by
\begin{equation}\label{eq:conservation law}
\begin{tikzcd}
    A\arrow[yshift=.7ex]{r}{\kappa_a}& B\arrow[yshift=-.7ex]{l}{\kappa_b}&& 
    0\arrow{r}{\kappa_I}& C\arrow[yshift=.7ex]{r}{\kappa_F}& 2C\arrow[yshift=-.7ex]{l}{\kappa_C} && 
    \mu
\end{tikzcd}
\end{equation}
where $\mu$ is not yet specified.

\begin{prop}
Let $N$ be the coarse-grained model associated with the system $\whole$ from \eqref{eq:conservation law}.  If $\mu$ is any measure on $\ZZ_{\ge0}^2$ other than the trivial measure $\delta_{(0,0)}$, then $N$ is transient even though all states are positive recurrent for $\chem_\Rate$. On the other hand, if $\mu=\delta_{(0,0)}$ then $N$ is positive recurrent.
\begin{proof}
$\chem_\Rate$ is not irreducible, but when it is partitioned into closed irreducible communicating classing, all are finite, and hence all states are positive recurrent. As always when $\kappa_E=0$ but $\kappa_C>0$, the empty state is transient for $\compart_\Rate$ but all other states are positive recurrent.

For $n\in\CoarseStateSpace$, let $S(n)=\sum_{a=0}^\infty\sum_{b=0}^\infty (a+b)n_{(a,b)}$ denote the sum of the number of $A$ and $B$ molecules, combined across all compartments in $n$.

First suppose that $\mu\ne\delta_{(0,0)}$. Then $S(N(t))$ cannot shrink, and grows with positive probability every time a compartment enters. So $N$ is transient in this case.

Now suppose $\mu=\delta_{(0,0)}$. For $n\in\CoarseStateSpace$, let $C(n)=\norm n_{\ell^1}$ be the number of compartments in state $n$, and let $V(n)=2C(n)$. Then
\begin{align*}
    \mathcal LV(n)=2\kappa_I+2\kappa_FC(n)-\kappa_CC(n)(C(n)-1),
\end{align*}
where $\mathcal L$ is the generator of $N$. This is less than $-1$ outside a finite set because it is quadratic in $C(n)$ with negative leading term, provided we restrict the state space to $\{n\in\CoarseStateSpace: S(n)=S(N(0))\}$. So Theorem \ref{thm:lyapunov-recurrence} applies and $N$ is positive recurrent, as claimed.
\end{proof}
\end{prop}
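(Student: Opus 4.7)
The plan is to split into the two cases based on whether $\mu$ can produce non-empty compartments, built around the quantity $S(n) := \sum_{(a,b)\in \ZZ_{\ge 0}^2} (a+b)\, n_{(a,b)}$, the total number of $A$ plus $B$ molecules summed across all compartments. The first step is to observe that $S$ is preserved by every transition except inflows: the internal reactions $A \leftrightarrows B$ only swap a molecule's type and so leave $a+b$ unchanged in each compartment; mergers collapse two compartments but sum their contents; fragmentations split a compartment but preserve its total content; and since $\kappa_E = 0$ there are no exits. Only inflows can change $S$, and they can only increase it (by the total $a+b$ of the incoming compartment).

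For the transient direction ($\mu \ne \delta_{(0,0)}$), the observation above shows $S(N(t))$ is non-decreasing along every sample path. Because $\kappa_I > 0$ the inflow counting process is Poisson with rate $\kappa_I$ and hence fires infinitely often almost surely, and each inflow independently produces a compartment with $a+b \ge 1$ with probability $1-\mu(\{(0,0)\}) > 0$. A standard Borel--Cantelli argument then gives $S(N(t)) \to \infty$ almost surely. Since every fixed $n \in \CoarseStateSpace$ has a fixed value of $S(n)$, no state can be visited infinitely often, and transience follows.

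For the positive-recurrent direction ($\mu = \delta_{(0,0)}$), $S$ becomes an exact conserved quantity and $N$ lives inside $\mathbb{S}_0 := \{n : S(n) = S_0\}$ with $S_0 = S(N(0))$. The idea is to apply the Lyapunov criterion of Theorem \ref{thm:lyapunov-recurrence} on $\mathbb{S}_0$ with $V(n) = 2 C(n)$, where $C(n) = \norm{n}_{\ell^1}$. Internal reactions leave $C$ unchanged; inflows add one compartment at rate $\kappa_I$, fragmentations add one at rate $\kappa_F C(n)$, and mergers remove one at rate $\kappa_C \binom{C(n)}{2}$, giving
\[
\mathcal{L} V(n) = 2\kappa_I + 2\kappa_F C(n) - \kappa_C C(n)(C(n)-1),
\]
a downward-opening quadratic in $C(n)$, hence bounded above by $-1$ once $C(n)$ exceeds some threshold $C_*$.

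The main obstacle I anticipate is verifying the norm-like hypothesis of Theorem \ref{thm:lyapunov-recurrence}. Without restricting to the conserved sheet, $\{V \le 2C_*\}$ is infinite, since nothing bounds the individual $(a,b)$ entries of a compartment. Inside $\mathbb{S}_0$, however, $V(n) \le 2C_*$ forces $C(n) \le C_*$ while $S(n) = S_0$ bounds the total molecule count, leaving only finitely many ways to distribute at most $S_0$ molecules (each labeled $A$ or $B$) across at most $C_*$ compartments. Hence $V$ is norm-like on $\mathbb{S}_0$, which is closed under the dynamics, and Theorem \ref{thm:lyapunov-recurrence} yields positive recurrence of every state in the closed irreducible component of $\mathbb{S}_0$ containing the initial state.
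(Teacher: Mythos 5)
Your proof is correct and follows essentially the same route as the paper's: the conserved quantity $S(n)$ drives transience when $\mu\ne\delta_{(0,0)}$, and the Lyapunov function $V(n)=2C(n)$ on the sheet $\{n: S(n)=S(N(0))\}$ gives positive recurrence when $\mu=\delta_{(0,0)}$, with your explicit check of the norm-like hypothesis being a slightly more careful version of what the paper states in one clause. The only piece you omit is the (one-line) verification that all states of $\chem_\Rate$ itself are positive recurrent, which holds because its closed communicating classes are finite.
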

\hfill $\triangle$
\end{example}

A natural question at this point is whether, if the behaviors in the last two examples are ruled out, $N$ can still be transient when $\chem_\Rate$ and $\compart_\Rate$ are both separately recurrent. Specifically, if $\chem_\Rate$ and $\compart_\Rate$ are both recurrent, there are no conservation laws, and the number of molecules that an incoming compartment can have is bounded, can $N$ be transient? The answer is yes, as the next example demonstrates.

\begin{example}\label{Ex:Null recurrent chem}
Consider the RNIC model $\whole=(\chem_\Rate,\compart_\Rate,\mu)$ described by
\begin{equation}\label{eq:Null recurrent chem}
\begin{tikzcd}
    0\arrow[yshift=.7ex]{r}{1}& S\arrow[yshift=-.7ex]{l}{1}\arrow{r}{1}& 2S& &
    0\arrow{r}{1}& C& 2C\arrow[swap]{l}{1} & &
    \delta_1,
\end{tikzcd}
\end{equation}
where $\delta_1$ is the point mass at one $S$.\hfill 

\begin{prop}
Let $N$ be the coarse-grained model associated to the network $\whole=(\chem_\Rate,\compart_\Rate,\mu)$ from \eqref{eq:Null recurrent chem}. Then $\chem_\Rate$ is recurrent with no conservation laws and the number of molecules in new compartments is bounded, however every state is transient for $N$.
\begin{proof}
$\chem_\Rate$ is (null) recurrent, and $\compart_\Rate$ is positive recurrent on the irreducible state space $\{1,2,\dots\}$, by Lemma \ref{Lem: Compartment Dynamics}.

It remains to show that every state is transient for $N$. As in all examples with $\kappa_E=0$, the state with zero compartments can never be returned to and we restrict the state space of the chain to $\CoarseStateSpace\setminus\{0\}$. With this assumption the state space is a closed irreducible set, so it suffices to pick one state and show that it is transient. We will show $e_0$ (the state with one empty compartment) is transient. Denoting a state of $N$ by $n$, let $C(n)=\sum_{x=0}^\infty n_x$ and $S(n)=\sum_{x=0}^\infty x\cdot n_x$ denote the total number of compartments and $S$ molecules, respectively. Define $V:\CoarseStateSpace\to[0,1]$ by
\[
V(n)=\frac{S(n)}{1+S(n)}.
\]
If $\mathcal L$ denotes the generator of $N$, notice that
\begin{align*}
    \mathcal LV(n)
    &=(C(n)+S(n)+1)\left(\frac{S(n)+1}{S(n)+2}-\frac{S(n)}{S(n)+1}\right)+S(n)\left(\frac{S(n)-1}{S(n)}-\frac{S(n)}{S(n)+1}\right)\\
    &=\frac{C(n)+S(n)+1}{(S(n)+2)(S(n)+1)}-\frac1{S(n)+1}\\
    &=\frac{C(n)-1}{(S(n)+2)(S(n)+1)}\\
    &\ge0
\end{align*}
for all $n\in\CoarseStateSpace\setminus\{0\}$. In particular, if $B=\{e_0\}$, we can apply Theorem \ref{thm:lyapunov-transience} to conclude that when $N$ is started from $e_0+e_1$ (the state with two compartments, one empty and the other with one $S$), then the probability of reaching $B$ is less than $1$. But when $N$ is started from $e_0$, it reaches $e_0+e_1$ with positive probability (the transition from $e_0$ to $e_0+e_1$ corresponds to an inflow event). Putting these together, when $N$ is started from $e_0$ it fails to return with positive probability, and hence $e_0$ is transient. As discussed, this is enough to conclude that all states are transient for $N$.
\end{proof}
\end{prop}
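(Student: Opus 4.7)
The easier assertions come first. The chemistry $\chem_\Rate$ is a birth--death chain on $\ZZ_{\ge 0}$ with up-rate $1+m$ and down-rate $m$ from state $m$, so Lemma \ref{Lem: Compartment Dynamics} (applied to the structurally identical network $0\leftrightarrows S\to 2S$, in which $\kappa_F=\kappa_E$ and $\kappa_I\le\kappa_E$) gives null recurrence. The single species $S$ is not conserved because $0\to S$ creates it from nothing, and $\mu=\delta_1$ has support $\{1\}$, which is bounded. So the only substantive content is showing that every state of $N$ is transient.

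Since $\kappa_E=0$ but $\kappa_I>0$, the empty configuration $\vec 0$ is not recurrent, and the remaining state space $\CoarseStateSpace\setminus\{\vec 0\}$ forms a single closed irreducible class: any configuration can be assembled from $e_0$ by inflows, chemistry inside a single compartment at a time, and coagulations. Consequently it suffices to exhibit one transient state, which I would take to be $e_0$. The plan is to apply Theorem \ref{thm:lyapunov-transience} using a bounded Lyapunov function built from the total molecule count $S(n):=\sum_x x\,n_x$. Heuristically $S(n)$ has positive drift: inflow contributes $+1$ at rate $\kappa_I=1$, the reaction $0\to S$ contributes total rate $C(n):=\norm{n}_{\ell^1}$, and the reactions $S\to 0$ and $S\to 2S$ cancel, giving $\mathcal L S(n) = 1+C(n)\ge 1$, which is the intuition for transience.

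To turn this drift into a rigorous transience statement via the bounded-function criterion, I would try the concave cap $V(n):=S(n)/(1+S(n))$, which is bounded in $[0,1)$ and increasing in $S(n)$. Only the chemistry and the inflow change $S$, so the generator contains only $\pm 1$-transitions in $S$; summing the $+1$-transitions of total rate $1+C(n)+S(n)$ and the $-1$-transitions of total rate $S(n)$ gives after simplification
\[
    \mathcal L V(n) \;=\; \frac{C(n)-1}{(S(n)+1)(S(n)+2)},
\]
which is $\ge 0$ throughout $\CoarseStateSpace\setminus\{\vec 0\}$. Setting $B=\{e_0\}$, one has $V(e_0)=0<1/2=V(e_1)$, so Theorem \ref{thm:lyapunov-transience} yields $\PP_{e_1}(\tau_B<\infty)<1$. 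Since starting from $e_0$ the process moves to $e_1$ with positive probability via a single $0\to S$ reaction, the return time to $e_0$ is infinite with positive probability, proving $e_0$ is transient and hence so is every state.

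The main obstacle is really the selection of $V$: the transience criterion demands a \emph{bounded} function with non-negative drift outside a finite set, which rules out $S(n)$ itself, and many natural bounded surrogates produce a negative $\mathcal L V$ because the $-1/[S(S+1)]$ contribution of the degradation reaction can dominate. The useful observation is that the specific choice $s\mapsto s/(1+s)$ telescopes just enough in $\mathcal L V$ to cancel the leading $S(n)$-terms, leaving only the strictly non-negative contribution $C(n)-1$ inherited from the creation reaction $0\to S$; any compartment then supplies the required non-negative drift.
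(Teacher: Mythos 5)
Your proposal is correct and follows essentially the same route as the paper: the identical Lyapunov function $V(n)=S(n)/(1+S(n))$, the same computation $\mathcal LV(n)=(C(n)-1)/((S(n)+1)(S(n)+2))\ge 0$, and the same application of Theorem \ref{thm:lyapunov-transience} with $B=\{e_0\}$. The only (immaterial) difference is that you launch the process from $e_1$ via an internal $0\to S$ reaction, whereas the paper launches it from $e_0+e_1$ via an inflow event; both intermediate states satisfy $V>0=\sup_B V$ and are reachable from $e_0$ in one step, so either works.
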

\hfill $\triangle$
\end{example}

In the previous example $\chem_\Rate$ was \emph{null} recurrent.  One may still be tempted to think that perhaps if it were \emph{positive} recurrent then the whole process must be.  The next example demonstrates that even this is not guaranteed.

\begin{example}\label{ex:chem positive recurrent}
Consider the compartment model described by
\begin{equation}\label{eq:chem positive recurrent}
\begin{tikzcd}
   A+B\arrow{r}{10}&0\arrow{r}{2}\arrow[swap]{dr}{1}& B&&
    0\arrow{r}{1}& C& 2C\arrow[swap]{l}{2} &&
    \delta_{(m,0)}(a,b)\\
    2B\arrow{ur}[swap]{10}&&A
\end{tikzcd}
\end{equation}
where $m$ is some non-negative integer and $\delta_{(m,0)}$ is the point mass at $m$ molecules of $A$ and zero of $B$. Let $\gamma>0$ denote the expected number of compartments in stationarity.

\begin{prop}
Let $\whole=(\chem_\Rate,\compart_\Rate,\mu)$ be the compartment model from \eqref{eq:chem positive recurrent}, and let $N$ be the associated coarse-grained model. Then $\chem_\Rate$ is positive recurrent, but $N$ is transient when $m > \gamma$. 
\begin{proof}
That $\chem_\Rate$ is positive recurrent is witnessed by the Lyapunov function
\begin{align*}
    V(a,b)=\begin{cases}
        3a+3        &   b=0\\
        3a+3b-2     &   b\ge1
    \end{cases}
\end{align*}
Indeed, if $\mathcal A$ denotes the generator of $\chem_\Rate$, then
\begin{align*}
\mathcal A V(a,b)&=\begin{cases}
    3(1)-2(2) & b=0\\
    3(1)+3(2)-1(10a) & b=1\\
    3(1)+3(2)-6(20a)-1(10) & b=2\\
    3(1)+3(2)-6(10ab)-6(5b(b-1)) & b\ge3
\end{cases}\\
&=\begin{cases}
    -1 & b=0\\
    9-10a & b=1\\
    -1-120a & b=2\\
    9-60ab-30b(b-1) & b\ge3
\end{cases}
\end{align*}
This is at most $-1$ away from $(0,1)$, so by Theorem \ref{thm:lyapunov-recurrence} $\chem_\Rate$ is positive recurrent.

Now regarding transience of $N$, let $\Fsim$ be the simulation representation of $\whole$, so that $N$ is a fuction of $\Fsim$. Let $X_A$ and $X_B$ denote the total number of $A$ and $B$ molecules, respectively, across all compartments in $N$ (equivalently, across all compartments in $\Fsim$). To show that $N$ is transient, we will show that $X_A(t)\to\infty$ a.s., as $t\to\infty$. To do this, we will make use of the construction of $\Fsim$ from section \ref{sec:construction}. Let $Y_I$ and $Y_C$ be as in that section, so that the process $M_C$ for the number of compartments is given by
\begin{align*}
M_C(t) &= M_C(0) + Y_I\left(t\right)- Y_C\left(  \int_0^t \frac{M_C(s) (M_C(s) - 1)}{2} ds\right).
\end{align*}
Similarly, for $r\in\{A+B\to0,0\to B,2B\to0,0\to A\}$ let $Y_r$ be as in section \ref{sec:construction}, and let $R_r$ be the associated counting process for the number of times reaction $r$ has occurred across all compartments, so that
\[
R_r(t) = Y_r \left( \mathop{\sum_{i\ge0}}_{T_i\le t}\int_{T_i}^{T_{i+1}\MIN t}\sum_{j=1}^{M_C(T_i)} \lambda_r(X^i_j(s)) ds\right),
\]
where the $T_i$ are the jump times of the process $M_C$, $X_j^i(s)$ is the state of the process in compartment $j$ at time $s$, and $\lambda_r$ is given as in \eqref{eq:lambda}. Then
\begin{align*}
X_A(t)
&=X_A(0)+R_{0\to A}(t)+mY_I\left(t\right)-R_{A+B\to0}(t)\\
&=X_A(0)+Y_{0\to A}\left(\int_0^t M_C(s) ds\right)+mY_I\left(t\right)-R_{A+B\to0}(t).
\end{align*}
Notice that in the last line above we were able to simplify the expression for $R_{0\to A}$ in terms of $Y_{0\to A}$ from the expression given above for $R_r$ in general. This was done by making use of the fact that the total rate of this reaction across all compartments, $\sum_j \lambda_{0\to A}(X_j^i(s))$, is exactly the total number of compartments $M_C(s)$. We cannot hope to do the same for $R_{A+B\to0}$ because the rate of that reaction depends on how the molecules are distributed across the compartments. However, notice that the total number of times the reaction $A+B\to0$ fires is at most the total number of $B$ molecules ever present in the system:
\begin{align*}
R_{A+B\to0}(t)
&\le X_B(0)+R_{0\to B}(t)\\
&=X_B(0)+Y_{0\to B}\left(2\int_0^t M_C(s) ds\right).
\end{align*}
Therefore,
\begin{align*}
X_A(t)
&\ge X_A(0)-X_B(0)+Y_{0\to A}\left(\int_0^t M_C(s) ds\right)+mY_I\left(t\right)-Y_{0\to B}\left(2\int_0^t M_C(s) ds\right).
\end{align*}
Recall that $\gamma$ denotes the expected number of $C$ in the CRN $\compart_\Rate$ at stationarity. By the CTMC ergodic theorem (see Theorem 45 in Chapter 4 of \cite{Serfozo_2009}), $\frac1t\int_0^t M_C(s)ds\to\gamma$ almost surely as $t\to\infty$. This will matter in its own right; it also follows that $\int_0^t M_C(s)ds\to\infty$ a.s.\ as $t\to\infty$. It is a standard fact about unit Poisson processes $Y$ that $Y(t)/t\to 1$ a.s.\ as $t\to\infty$. Composing this Poisson limit with the limit from the previous sentence, we get that
\begin{align*}
    \frac{Y_{0\to B}\left(2\int_0^tM_C(s)ds\right)}{2\int_0^tM_C(s)ds}\to 1
\end{align*}
a.s. as $t\to\infty$, and similarly for $Y_{0\to A}$. Putting this all together we have
\begin{align*}
     \lim_{t\to\infty}\frac{X_A(t)}t
     &\ge\lim_{t\to\infty}\Bigg[ \frac{Y_{0\to A}\left(\int_0^tM_C(s)ds\right)}{\int_0^tM_C(s)ds}\cdot\frac1t\int_0^tM_C(s)ds+m\frac{Y_I(t)}t\\
     &\hspace{1in}-\frac{Y_{0\to B}\left(2\int_0^tM_C(s)ds\right)}{2\int_0^tM_C(s)ds}\cdot\frac2t\int_0^tM_C(s)ds\Bigg]\\
     &=\gamma+m-2\gamma.
\end{align*}
almost surely. Therefore, as long as the integer $m$ is (strictly) larger than $\gamma$, $X_A(t)/t$ is converging almost surely to a positive number. In this case $X_A(t)\to\infty$ a.s.\ as $t\to\infty$, and hence $N$ is transient.
\end{proof}
\end{prop}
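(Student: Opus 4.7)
The plan is to treat the two claims separately: positive recurrence of $\chem_\Rate$, and transience of $N$ when $m > \gamma$.

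For $\chem_\Rate$, I would apply Theorem \ref{thm:lyapunov-recurrence}. The subtlety is that the only removal mechanism for $A$ is the reaction $A+B\to 0$, which requires at least one $B$ present, while $0\to A$ fires at constant rate $1$; so the axis $\{b=0\}$ needs special treatment in any Lyapunov function. I would try an ansatz linear in $a$ and $b$ with a small piecewise-constant correction depending on whether $b=0$ or $b\ge 1$, choose the coefficient of $a$ large enough that each firing of $A+B\to 0$ produces a strongly negative contribution, and then verify case-by-case on $b\in\{0,1,2\}$ and $b\ge 3$ that $\mathcal A V \le -1$ outside a finite neighborhood of the origin. This is routine once the coefficients are tuned.

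The second half is where the interesting work lies. The goal is to prove that the total number of $A$ molecules across all compartments, $X_A(t)$, tends to infinity almost surely at a positive linear rate; this immediately rules out recurrence of any state of $N$. I would use the explicit construction of $\Fsim$ from Section \ref{sec:construction} in terms of independent unit-rate Poisson processes $Y_I, Y_C, Y_r$. The compartment count $M_C$ evolves autonomously according to \eqref{eq:compartments11} as a positive recurrent chain with stationary mean $\gamma$, so the CTMC ergodic theorem yields $\tfrac{1}{t}\int_0^t M_C(s)\,ds \to \gamma$ almost surely, and in particular $\int_0^t M_C(s)\,ds \to \infty$.

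The crucial observation is that the mass-action rates of the ``birth'' reactions $0\to A$ and $0\to B$ do not depend on compartment contents, so their total firing rates across compartments are exactly $M_C(s)$ and $2M_C(s)$. This allows one to write $R_{0\to A}(t) = Y_{0\to A}\bigl(\int_0^t M_C(s)\,ds\bigr)$ and analogously for $R_{0\to B}(t)$. The main obstacle is the reaction $A+B\to 0$: its rate in each compartment depends on how $A$ and $B$ molecules are distributed among the compartments, so no such closed form is available. The workaround is not to compute $R_{A+B\to 0}$ exactly, but simply to bound it by the total supply of $B$: since each firing consumes one $B$,
\begin{align*}
R_{A+B\to 0}(t) \;\le\; X_B(0) + Y_{0\to B}\!\left(2\int_0^t M_C(s)\,ds\right),
\end{align*}
regardless of the compartment structure. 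Writing $X_A(t) = X_A(0) + R_{0\to A}(t) + m\,Y_I(t) - R_{A+B\to 0}(t)$ and substituting this inequality, then dividing by $t$ and applying the standard limit $Y(u)/u \to 1$ a.s.\ together with the ergodic limit above, gives $\liminf_{t\to\infty} X_A(t)/t \ge \gamma + m - 2\gamma = m - \gamma$. Whenever $m > \gamma$ this is strictly positive, so $X_A(t)\to\infty$ almost surely, and hence $N$ is transient.
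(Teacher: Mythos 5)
Your proposal is correct and follows essentially the same route as the paper: a piecewise-linear Lyapunov function with a constant correction on the $\{b=0\}$ axis for positive recurrence of $\chem_\Rate$, and for transience the identical chain of ideas --- the explicit Poisson-process construction of $\Fsim$, the observation that the zeroth-order reactions $0\to A$ and $0\to B$ have total rate proportional to $M_C(s)$, the bound $R_{A+B\to0}(t)\le X_B(0)+R_{0\to B}(t)$, and the ergodic theorem plus the Poisson law of large numbers to get $\liminf_{t\to\infty}X_A(t)/t\ge m-\gamma>0$. The only part you leave implicit is the explicit Lyapunov function (the paper uses $V(a,b)=3a+3$ for $b=0$ and $3a+3b-2$ for $b\ge1$), but the structure you describe is exactly what works.
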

Note that the above example shows the potential usefulness of the RNIC representation provided in section \ref{sec:construction}.\hfill
$\triangle$
\end{example}

\section{Stationary Distribution in a Special Case}
\label{sec:SD}

In light of Theorem \ref{Thm:Recurrence}, whenever $\compart_\Rate$ is positive recurrent and $\kappa_E>0$, then $N$, the coarse-grained model associated to $\whole=(\chem_\Rate,\compart_\Rate,\mu)$, is positive recurrent for at least some states. In this case, the standard theory of Markov chains tells us that there is a stationary distribution supported on those states. Ideally, it would be possible to write down a formula for this stationary distribution in terms of information about the CRNs $\chem_\Rate$ and $\compart_\Rate$. Under the further assumption that $\kappa_C=0=\kappa_F$ (so that compartments are not interacting), we are able to do so.

\begin{theorem}\label{Thm:Stationary Distribution}
Consider a non-explosive model $\whole=(\chem_\Rate,\compart_\Rate,\mu)$ with $\kappa_F=\kappa_C=0$, and $\kappa_E>0$:
\begin{equation*}
\begin{tikzcd}
    \chem_\Rate &&
    0\arrow[yshift=.7ex]{r}{\kappa_I}& C\arrow[yshift=-.7ex]{l}{\kappa_E}
    &&
    \mu
\end{tikzcd}
\end{equation*}
Let $N$ be the coarse-grained model associated to $\whole$. For $x\in\ZZ_{\ge0}^d$ and $t\in[0,\infty)$, let $P_\mu(x,t)$ denote the probability that $\chem_\Rate$ is in state $x$ at time $t$ when started from time zero with initial distribution $\mu$. For $x\in\ZZ_{\ge0}^d$ define $\alpha(x)$ via
\begin{align*}
    \alpha(x)=\int_0^\infty P_\mu(x,t) \kappa_E e^{-\kappa_E t}\mathrm dt,
\end{align*}
and define a distribution $\pi$ on $\CoarseStateSpace$ via
\begin{align*}
\pi(n)=\left(\prod_{x\in\ZZ_{\ge0}^d}\frac{\alpha(x)^{n_x}}{n_x!}\right)\cdot\left[e^{-\kappa_I/\kappa_E}\cdot\left(\frac{\kappa_I}{\kappa_E}\right)^{\norm n_{\ell^1}}\right]
\end{align*}
Then $\pi$ is the unique stationary distribution for $N$. 

\begin{remark}\label{rmk:DR}
To apply Theorem \ref{Thm:Stationary Distribution}, one needs to know, not just the stationary distribution for the chemistry, but the distribution for all time. This restriction may seem daunting, and indeed for many models this distribution is not known. One class of models where it \textit{is} know are the DR models of \cite{Anderson_Schnoerr_Yuan_2020}. A second class of models are monomolecular reaction networks with arbitrary initial conditions --- see \cite{Jahnke_Huisinga_2007}. Note that \cite{Anderson_Schnoerr_Yuan_2020} allows for more general networks (all monomolecular networks satisfy the DR condition), but \cite{Jahnke_Huisinga_2007} allows for more general initial conditions (the DR paper requires Poisson initial conditions).
\end{remark}

\begin{proof}[Proof of Theorem \ref{Thm:Stationary Distribution}.]
Note that by Theorem \ref{Thm:Recurrence}, any state which is reachable from the zero state is positive recurrent, and all other states are transient. Furthermore, notice that $N$ is irreducible if restricted to the set of states which are reachable from the zero state, since zero is reachable from any state. Thus there is a unique stationary distribution. To prove that the $\pi$ given above is indeed this unique stationary distribution, it suffices to show that $\pi$ is a distribution and $\pi Q=0$, where $Q$ is the transition rate matrix for $N$. That $\pi$ is a distribution follows from the fact that $\alpha$ is a distribution, which we will check later in the proof. So fix $n\in\CoarseStateSpace$; we wish to show that $\sum_{n'\in\CoarseStateSpace}\pi(n') q(n',n)=0$.

Note that there are only three possible types of transitions: inflow of compartment, outflow of compartment, and transition of reaction network. Expanding the sum above into three terms, one for each of these types of transitions, the desired equality can be written 
\begin{align*}
    \sum_{x\in\ZZ_{\ge0}^d}\Bigg[\pi(n-e_x)&q(n-e_x,n)+\pi(n+e_x)q(n+e_x,n)\\\nonumber
    &+\sum_j \pi(n-e_x+e_{x-\nu'_j+\nu_j})q(n-e_x+e_{x-\nu'_j+\nu_j},n)\Bigg]\\\nonumber
    &=\pi(n)\sum_{x\in\ZZ_{\ge0}^d}\left(q(n,n+e_x)+q(n,n-e_x)+\sum_j q(n,n-e_x+e_{x+\nu'_j-\nu_j})\right)
\end{align*}
or 
\begin{align}\nonumber
    \sum_{x\in\ZZ_{\ge0}^d}\Bigg[\pi(n-e_x)&\kappa_I\mu(x)+\pi(n+e_x)\kappa_E (n_x+1)\\\label{eq:piQ=0}
    &+\sum_j \pi(n-e_x+e_{x-\nu'_j+\nu_j})(n_{x-\nu'_j+\nu_j}+1)\kappa_j\binom{x-\nu'_j+\nu_j}{\nu_j}\Bigg]\\\nonumber
    &=\pi(n)\sum_{x\in\ZZ_{\ge0}^d}\left(\kappa_I\mu(x)+\kappa_En_x+\sum_j n_x\kappa_j\binom{x}{\nu_j}\right)
\end{align}
To prove this equality, we will consider two cases. Suppose first that $n$ is such that $n_y>0$ for some $y\in\ZZ_{\ge0}^d$ with $\alpha(y)=0$, and fix such a $y$. Then $\alpha(y)$ participates in the product defining $\pi(n)$, and hence $\pi(n)=0$. Thus the right-hand side of \eqref{eq:piQ=0} is zero; we claim that the left-hand side is also zero. Specifically, we will argue for each $x$ and each $j$, each of the three terms in the sum is zero. So fix $x$ and $j$:
\begin{itemize}
    \item $\pi(n-e_x)\kappa_I\mu(x)$: Notice that if $x\ne y$ then $\pi(n-e_x)=0$ for the same reason that $\pi(n)=0$. If $x=y$ then $\mu(x)=0$, since if $\mu(y)>0$ it would be the case that $P_\mu(y,t)>0$ for all small enough $t$, and hence the integral defining $\alpha(y)$ would be positive.

    \item $\pi(n+e_x)\kappa_E (n_x+1)$: Regardless of $x$, $\pi(n+e_x)=0$ for the same reason that $\pi(n)=0$.

    \item $\pi(n-e_x+e_{x-\nu'_j+\nu_j})(n_{x-\nu'_j+\nu_j}+1)\kappa_j\binom{x-\nu'_j+\nu_j}{\nu_j}$: As before, if $x\ne y$ then $\pi(n-e_x+e_{x-\nu'_j+\nu_j})=0$. Suppose towards a contradiction that $\pi(n-e_y+e_{y-\nu'_j+\nu_j})\ne0$ and that $\kappa_j\binom{y-\nu'_j+\nu_j}{\nu_j}\ne 0$. Then $\pi(n-e_y+e_{y-\nu'_j+\nu_j})\ne0$ implies that $\alpha(y-\nu'_j+\nu_j)\ne0$, and hence $P_\mu(y-\nu'_j+\nu_j,t)\ne0$ for some $t$. But this means that the state $y-\nu'_j+\nu_j$ is reachable for $\chem$ when started with initial distribution $\mu$. But $\kappa_j\binom{y-\nu'_j+\nu_j}{\nu_j}\ne 0$ implies that $y$ is reachable from $y-\nu'_j+\nu_j$ for $\chem$ via the $j$-th reaction, so we conclude that $y$ is reachable from $\mu$. But this implies that $P_\mu(y,t)\ne0$ for $t>0$, which in turn means that $\alpha(y)>0$. This contradicts our choice of $y$, so it must be that our assumption was wrong: either $\pi(n-e_y+e_{y-\nu'_j+\nu_j})=0$ or $\kappa_j\binom{y-\nu'_j+\nu_j}{\nu_j}=0$. But either of those imply the desired equality $\pi(n-e_y+e_{y-\nu'_j+\nu_j})(n_{y-\nu'_j+\nu_j}+1)\kappa_j\binom{y-\nu'_j+\nu_j}{\nu_j}=0$.
\end{itemize}
This proves that \eqref{eq:piQ=0} reduces to $0=0$ in this case. The reminder of the proof will be devoted to the second case; namely, the case where $n$ is such that $n_y=0$ for all $y\in\ZZ_{\ge0}^d$ with $\alpha(y)=0$.

Let $\XX=\{x\in\ZZ_{\ge0}^d:\alpha(x)\ne 0\}$. We claim that for every $x\notin\XX$ and every $j$, every summand in \eqref{eq:piQ=0} is zero. So fix $x\notin\XX$ and $j$:
\begin{itemize}
    \item $\pi(n-e_x)\kappa_I\mu(x)$: Since $\alpha(x)=0$, by choice of $n$ we have $n_x=0$. But this means that $n-e_x$ is negative at $x$ and hence $n-e_x\notin\CoarseStateSpace$, so $\pi(n-e_x)=0$.

    \item $\pi(n+e_x)\kappa_E (n_x+1)$: Notice that $\alpha(x)=0$ participates in the product defining $\pi(n+e_x)$, and hence $\pi(n+e_x)=0$.

    \item $\pi(n-e_x+e_{x-\nu'_j+\nu_j})(n_{x-\nu'_j+\nu_j}+1)\kappa_j\binom{x-\nu'_j+\nu_j}{\nu_j}$: As before, $n-e_x+e_{x-\nu'_j+\nu_j}\notin\CoarseStateSpace$ and hence $\pi(n-e_x+e_{x-\nu'_j+\nu_j})=0$.

    \item $\kappa_I\mu(x)$: Since $\alpha(x)=0$, it must be the case that $\mu(x)=0$, as otherwise $P_\mu(x,t)$ would be positive for sufficiently small $t$.

    \item $\kappa_En_x$: Since $\alpha(x)=0$, by choice of $n$ we have $n_x=0$.

    \item $n_x\kappa_j\binom{x}{\nu_j}$: Once again, $n_x=0$.
\end{itemize}
Thus we have shown that terms with $x\notin\XX$ do not contribute to \eqref{eq:piQ=0}. So to complete the proof, we have only to show that

\begin{align}\nonumber
    \sum_{x\in\XX}\Bigg[&\pi(n-e_x)\kappa_I\mu(x)+\pi(n+e_x)\kappa_E (n_x+1)\\\label{eq:sum-over-XX}
    &+\sum_j \pi(n-e_x+e_{x-\nu'_j+\nu_j})(n_{x-\nu'_j+\nu_j}+1)\kappa_j\binom{x-\nu'_j+\nu_j}{\nu_j}\Bigg]\\
    &\hspace{1.5in}=\pi(n)\sum_{x\in\XX}\left(\kappa_I\mu(x)+\kappa_En_x+\sum_j n_x\kappa_j\binom{x}{\nu_j}\right).\nonumber
\end{align}

Let $x\in\XX$ be arbitrary. Integration by parts gives
\begin{align*}
    \int_0^\infty\left(\frac{\mathrm d}{\mathrm dt}P_\mu(x,t)\right)\kappa_E e^{-\kappa_E t}\mathrm dt
    &=\kappa_E e^{-\kappa_E t}P_\mu(x,t)\Big|_{t=0}^{t=\infty}-\int_0^\infty P_\mu(x,t)(-\kappa_E^2 e^{-\kappa_E t})\mathrm dt\\
    &=-\kappa_E\mu(x)+\kappa_E\alpha(x).
\end{align*}
Because $P_\mu$ is the distribution for $\chem_\Rate$, the Kolmogorov forward equations for $\chem_\Rate$ tell us that
\begin{align*}
    \frac{\mathrm d}{\mathrm dt}P_\mu(x,t)
    &=\sum_{\nu_j\to \nu_j'}\kappa_j\binom{x-\nu'_j+\nu_j}{\nu_j}P_\mu(x-\nu_j'+\nu_j,t)
    -\sum_{\nu_j\to \nu_j'}\kappa_j\binom{x}{\nu_j}P_\mu(x,t)
\end{align*}
for each $t$. Plugging this in above and rearranging yields
\begin{align*}
    \sum_{\nu_j\to \nu_j'}\kappa_j\binom{x-\nu'_j+\nu_j}{\nu_j}\alpha(x-\nu_j'+\nu_j)
    -\sum_{\nu_j\to \nu_j'}\kappa_j\binom{x}{\nu_j}\alpha(x)
    &=-\kappa_E\mu(x)+\kappa_E\alpha(x)\\
    \kappa_E\frac{\mu(x)}{\alpha(x)}+\sum_{\nu_j\to \nu_j'}\kappa_j\binom{x-\nu'_j+\nu_j}{\nu_j}\frac{\alpha(x-\nu_j'+\nu_j)}{\alpha(x)}
    &=\kappa_E+\sum_{\nu_j\to \nu_j'}\kappa_j\binom{x}{\nu_j}.
\end{align*}
Note that we did not divide by zero in the second line because $\alpha(x)\ne0$ by definition of $\XX$. Since $x\in\XX$ was arbitrary, we can multiply through by $n_x$ and sum over $x$, which yields
\begin{align}\nonumber
    \sum_{x\in\XX}\bigg(n_x\kappa_E\frac{\mu(x)}{\alpha(x)}+n_x\sum_{\nu_j\to \nu_j'}\kappa_j\binom{x-\nu'_j+\nu_j}{\nu_j}&\frac{\alpha(x-\nu_j'+\nu_j)}{\alpha(x)}\bigg)\\\label{eq:above}
    &=\sum_{x\in\XX}\bigg(n_x\kappa_E+n_x\sum_{\nu_j\to \nu_j'}\kappa_j\binom{x}{\nu_j}\bigg).
\end{align}
Now we claim that $\mu$ and $\alpha$ are both probability measures supported on $\XX$. We know that $\mu$ is a probability measure by assumption; it is supported on $\XX$ because if $\mu(x)>0$ then $P_\mu(x,t)>0$ for small enough $t$ and hence $\alpha(x)>0$. We know that $\alpha$ is supported on $\XX$ by definition of $\XX$; to see that it is a probability measure, use the fact that the integrand in the definition of $\alpha$ is non-negative to interchange a sum over $x$ with the integral and then apply the fact that $P_\mu(x,t)$ is a probability measure for each $t$. Therefore $\mu$ and $\alpha$ are both probability measures supported on $\XX$, as claimed; it follows that $\sum_{x\in\XX} \kappa_I \mu(x)=\kappa_I=\sum_{x\in\XX}\kappa_I\alpha(x)$. So adding $\kappa_I$ to both sides of \eqref{eq:above} gives
\begin{align}\nonumber
    \sum_{x\in\XX}\bigg(n_x\kappa_E\frac{\mu(x)}{\alpha(x)}+\kappa_I\alpha(x)+n_x\sum_{\nu_j\to \nu_j'}\kappa_j&\binom{x-\nu'_j+\nu_j}{\nu_j}\frac{\alpha(x-\nu_j'+\nu_j)}{\alpha(x)}\bigg)\\\label{eq:almost done} &=\sum_{x\in\XX}\bigg(\kappa_I\mu(x)+n_x\kappa_E+n_x\sum_{\nu_j\to \nu_j'}\kappa_j\binom{x}{\nu_j}\bigg).
\end{align}
Now notice that, directly from the definition of $\pi$, we have
\begin{align*}
    \frac{\pi(n-e_x)}{\pi(n)}
    &=\frac{n_x}{\alpha(x)}\frac{\kappa_E}{\kappa_I}\\
    \frac{\pi(n+e_x)}{\pi(n)}
    &=\frac{\alpha(x)}{n_x+1}\frac{\kappa_I}{\kappa_E}\\
    \frac{\pi(n-e_x+e_{x-\nu_j'+\nu_j})}{\pi(n)}
    &=\frac{\alpha(x-\nu_j'+\nu_j)}{\alpha(x)}\frac{n_x}{n_{x-\nu_j'+\nu_j}+1},
\end{align*}
where the last equality holds for each reaction $\nu_j\to \nu_j'$. Applying these three in order on the left-hand side of \eqref{eq:almost done}, we get
\begin{align*}
    \sum_{x\in\XX}&\bigg(\kappa_I\mu(x)\frac{\pi(n-e_x)}{\pi(n)}+\kappa_E(n_x+1)\frac{\pi(n+e_x)}{\pi(n)}\\
    & +\sum_{\nu_j\to \nu_j'}\kappa_j\binom{x-\nu'_j+\nu_j}{\nu_j}(n_{x-\nu_j'+\nu_j}+1)\frac{\pi(n-e_x+e_{x-\nu_j'+\nu_j})}{\pi(n)}\bigg)\\
    &=\sum_{x\in\XX}\bigg(\kappa_I\mu(x)+n_x\kappa_E+n_x\sum_{\nu_j\to \nu_j'}\kappa_j\binom{x}{\nu_j}\bigg)\\
    \sum_{x\in\XX}&\bigg(\kappa_I\mu(x)\pi(n-e_x)+\kappa_E(n_x+1)\pi(n+e_x)\\
    &+\sum_{\nu_j\to \nu_j'}\kappa_j\binom{x-\nu'_j+\nu_j}{\nu_j}(n_{x-\nu_j'+\nu_j}+1)\pi(n-e_x+e_{x-\nu_j'+\nu_j})\bigg)\\
    &=\pi(n)\sum_{x\in\XX}\bigg(\kappa_I\mu(x)+n_x\kappa_E+n_x\sum_{\nu_j\to \nu_j'}\kappa_j\binom{x}{\nu_j}\bigg),
\end{align*}
which is exactly the desired equality, \eqref{eq:sum-over-XX}.
\end{proof}
\end{theorem}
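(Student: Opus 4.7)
The plan is to recognize this setup as an $M/M/\infty$-type system in which each compartment carries an internally evolving ``type,'' to guess the stationary distribution from the independent-marking theorem for Poisson point processes, and then to verify the guess directly from the global balance equations $\pi Q=0$. Since $\kappa_F=\kappa_C=0$, compartments never interact: once born, each compartment has an independent $\mathrm{Exp}(\kappa_E)$ lifetime during which its internal state evolves as a copy of $\chem_\Rate$ started from $\mu$, and the number-of-compartments process is a standard $M/M/\infty$ chain with stationary distribution $\mathrm{Poisson}(\kappa_I/\kappa_E)$. The quantity $\alpha(x)$ has the clean probabilistic interpretation $\alpha(x)=\PP(X_\mu(T)=x)$ where $T\sim\mathrm{Exp}(\kappa_E)$ is independent of $X_\mu$, so by independent thinning the counts in each state $x$ should be independent $\mathrm{Poisson}((\kappa_I/\kappa_E)\alpha(x))$; expanding this product while using $\sum_x\alpha(x)=1$ reproduces the formula for $\pi$.

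To turn this into a proof I would first invoke Theorem~\ref{Thm:Recurrence} to reduce to checking global balance on the closed irreducible class reachable from $\vec 0$, and then enumerate the three types of transitions into a fixed state $n$: inflows from $n-e_x$ at rate $\kappa_I\mu(x)$, exits from $n+e_x$ at rate $\kappa_E(n_x+1)$, and internal reactions from $n-e_x+e_{x-\nu_r'+\nu_r}$ at rate $(n_{x-\nu_r'+\nu_r}+1)\kappa_r\binom{x-\nu_r'+\nu_r}{\nu_r}$. The product structure of $\pi$ gives the clean ratios $\pi(n-e_x)/\pi(n)=(n_x\kappa_E)/(\alpha(x)\kappa_I)$, $\pi(n+e_x)/\pi(n)=(\alpha(x)\kappa_I)/((n_x+1)\kappa_E)$, and $\pi(n-e_x+e_y)/\pi(n)=\alpha(y)n_x/(\alpha(x)(n_y+1))$, so the balance equation for $n$ will follow once the $\alpha(x)$'s themselves satisfy a suitable identity. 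That identity is supplied by multiplying the Kolmogorov forward equation for $\chem_\Rate$ by $\kappa_E e^{-\kappa_E t}$ and integrating by parts, which yields, for every $x$ with $\alpha(x)>0$,
\begin{align*}
\kappa_E\bigl(\alpha(x)-\mu(x)\bigr)=\sum_{r\in\Reac}\kappa_r\binom{x-\nu_r'+\nu_r}{\nu_r}\alpha(x-\nu_r'+\nu_r)-\sum_{r\in\Reac}\kappa_r\binom{x}{\nu_r}\alpha(x).
\end{align*}
Dividing by $\alpha(x)$, multiplying by $n_x$, summing over $x$ in the support $\XX=\{x:\alpha(x)>0\}$, and adding the constant $\kappa_I=\sum_{x\in\XX}\kappa_I\mu(x)=\sum_{x\in\XX}\kappa_I\alpha(x)$ to both sides will then translate, via the $\pi$-ratios above, into the desired global balance equation at $n$.

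The main obstacle is the bookkeeping around the support of $\alpha$: if $n_y>0$ for some $y\notin\XX$ then $\pi(n)=0$ and one must verify that every term on the ``flow in'' side of the balance equation also vanishes. I would handle this with a short case analysis, splitting by whether the transition involves the bad coordinate $y$. In the generic case, any $n'\in\{n-e_x,n+e_x,n-e_x+e_{x-\nu_r'+\nu_r}\}$ with neither $x$ nor $x-\nu_r'+\nu_r$ equal to $y$ still has a positive count at $y$ and hence inherits the factor $\alpha(y)^{n_y}=0$; the remaining ``dangerous'' incoming rate is a reaction transition ending at $y$, but a nonzero such rate would force the source state $y-\nu_r'+\nu_r$ to lie in $\XX$ and $\chem_\Rate$ to reach $y$ from it in one step, contradicting $\alpha(y)=0$. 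Inflow with $x=y$ is killed by $\mu(y)=0$, which holds since any $\mu(y)>0$ would force $P_\mu(y,t)>0$ for small $t$ and hence $\alpha(y)>0$. Once these degenerate cases are eliminated, the calculation reduces cleanly to the identity over $\XX$ described above, completing the verification.
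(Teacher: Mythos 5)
Your proposal is correct and follows essentially the same route as the paper's proof: uniqueness via Theorem \ref{Thm:Recurrence}, verification of $\pi Q=0$ using the ratios coming from the product form of $\pi$, the key identity for $\alpha$ obtained by integrating the Kolmogorov forward equation against $\kappa_E e^{-\kappa_E t}$ by parts, the normalization step $\sum_{x\in\XX}\kappa_I\mu(x)=\kappa_I=\sum_{x\in\XX}\kappa_I\alpha(x)$, and the same case analysis killing the terms where $\alpha$ vanishes. The $M/M/\infty$/thinning heuristic at the start is a nice added motivation but does not change the verification, which matches the paper's argument step for step.
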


Let us now consider some examples of applying this result.

\begin{example}\label{ex:birth death pi}
Let $\lambda\ge 0$, and consider the compartment system
\begin{center}
\begin{tikzcd}
    0\arrow[yshift=.7ex]{r}{\kappa_b}& \arrow[yshift=-.7ex]{l}{\kappa_d}S & &
    0\arrow[yshift=.7ex]{r}{\kappa_I}& C\arrow[yshift=-.7ex]{l}{\kappa_E} & &
    \operatorname{Poisson}(\lambda)
\end{tikzcd}
\end{center}
Then the stationary distribution of the system is given by
\begin{align*}
\pi(n)=\left(\prod_{x=0}^\infty\frac{\alpha(x)^{n_x}}{n_x!}\right)\cdot\left[e^{-\kappa_I/\kappa_E}\cdot\left(\frac{\kappa_I}{\kappa_E}\right)^{\norm n_{\ell^1}}\right],
\end{align*}
where
\begin{align*}
\alpha(x)=\int_0^\infty \exp\left\{-(\lambda-\kappa_b/\kappa_d)e^{-\kappa_dt}-\kappa_b/\kappa_d\right\}\frac{((\lambda-\kappa_b/\kappa_d)e^{-\kappa_dt}+\kappa_b/\kappa_d)^x}{x!} \kappa_E e^{-\kappa_E t}\mathrm dt.
\end{align*}
\begin{proof}
Check that the distribution 
\begin{align*}
    P_\lambda(x,t):=\exp\left\{-(\lambda-\kappa_b/\kappa_d)e^{-\kappa_dt}-\kappa_b/\kappa_d\right\}\frac{((\lambda-\kappa_b/\kappa_d)e^{-\kappa_dt}+\kappa_b/\kappa_d)^x}{x!}
\end{align*}
is $\operatorname{Poisson}(\lambda)$ at time $t=0$ and satisfies
\begin{align*}
    \frac{\mathrm d}{\mathrm dt}P_\lambda(x,t)
    &=\kappa_b P_\lambda(x-1,t)+\kappa_d(x+1) P_\lambda(x+1,t)
    -\kappa_b P_\lambda(x,t)-\kappa_dx P_\lambda(x,t)
\end{align*}
for each $x$ and $t$, and apply Theorem \ref{Thm:Stationary Distribution}.
\end{proof}
\hfill $\triangle$
\end{example}

In the previous example, notice that the expected value of $\alpha$ is
\begin{align*}
    &\sum_{x=0}^\infty x\alpha(x)\\
    &=\int_0^\infty \sum_{x=0}^\infty\exp\left\{-(\lambda-\kappa_b/\kappa_d)e^{-\kappa_dt}-\kappa_b/\kappa_d\right\}\frac{((\lambda-\kappa_b/\kappa_d)e^{-\kappa_dt}+\kappa_b/\kappa_d)^{x+1}}{x!} \kappa_E e^{-\kappa_E t}\mathrm dt\\
    &=\int_0^\infty (\lambda-\kappa_b/\kappa_d)\kappa_Ee^{-(\kappa_d+\kappa_E)t}+\frac{\kappa_b\kappa_E}{\kappa_d}e^{-\kappa_E t}\mathrm dt\\
    &=\frac{(\lambda-\kappa_b/\kappa_d)\kappa_E}{\kappa_d+\kappa_E}+\frac{\kappa_b}{\kappa_d}\\
    &=\frac{\lambda\kappa_E+\kappa_b}{\kappa_d+\kappa_E}.
\end{align*}
This matches \cite{Duso_Zechner_2020}, where the same example is consider in section 2.A (see specifically their equation [20] and the following discussion). Note that in \cite{Duso_Zechner_2020}, though the expected value of $\alpha$ is calculated in general, an explicit formula for $\alpha(x)$ (in their notation, $P_\infty(x)$) is given in only two cases. The first is the case where $\lambda=\kappa_b/\kappa_d$, where (in section S7.4 of their SI Appendix) they remark that $\alpha$ is Poission with mean $\lambda$. This matches the formula we give above in Example \ref{ex:birth death pi}. The second case they cover is the one where $\kappa_d=0$. In that case they obtain
\begin{align*}
    \alpha(x)=(1-\xi)\xi^x e^{\lambda(1/\xi -1)}\frac{\Gamma(1+x,\lambda/\xi)}{x!},
\end{align*}
where $\xi=\kappa_b/(\kappa_b+\kappa_E)$ and $\Gamma$ is the upper incomplete Gamma function. One can check that this agrees with our next example, Example \ref{ex:birth only pi}, in the case where $\mu$ is taken to be Poission with parameter $\lambda$ by applying the binomial theorem in our formula and then making a change of variable in the integral.

The following example is interesting for a few reasons. First, the chemistry is not converging to any sort of stationary distribution, and yet the whole compartment model is. Second, notice that when $\mu$ is not a Poisson distribution, $P_\mu(x,t)$ is not a Poisson distribution in $x$ for all $t$ unlike the previous example or more generally the DR models from Remark \ref{rmk:DR}. Third, as discussed above, it generalizes an example from \cite{Duso_Zechner_2020}.

\begin{example}\label{ex:birth only pi}
Let $\mu$ be a probability distribution on $\ZZ_{\ge0}$, and consider the compartment system
\begin{center}
\begin{tikzcd}
    0\arrow{r}{\kappa_b}& S& &
    0\arrow[yshift=.7ex]{r}{\kappa_I}& C\arrow[yshift=-.7ex]{l}{\kappa_E} & &
    \mu,
\end{tikzcd}
\end{center}
Then the stationary distribution of the system is given by
\begin{align*}
\pi(n)=\left(\prod_{x=0}^\infty\frac{\alpha(x)^{n_x}}{n_x!}\right)\cdot\left[e^{-\kappa_I/\kappa_E}\cdot\left(\frac{\kappa_I}{\kappa_E}\right)^{\norm n_{\ell^1}}\right],
\end{align*}
where
\begin{align*}
\alpha(x)=\int_0^\infty e^{-\kappa_b t}\left(\sum_{m=0}^x\frac{\kappa_b^mt^m}{m!}\mu(x-m)\right) \kappa_E e^{-\kappa_E t}\mathrm dt.
\end{align*}
\begin{proof}
Check that the distribution
\begin{align*}
    P_\mu(x,t):=e^{-\kappa_b t}\left(\sum_{m=0}^x\frac{\kappa_b^mt^m}{m!}\mu(x-m)\right)
\end{align*}
satisfies
\begin{align*}
    \frac{\mathrm d}{\mathrm dt}P_\mu(x,t)
    &=\kappa_b P_\mu(x-1,t)-\kappa_b P_\mu(x,t),
\end{align*}
with initial condition $P_\mu(x,0)=\mu(x)$, and apply Theorem \ref{Thm:Stationary Distribution}.
\end{proof}
\hfill $\triangle$
\end{example}

\section{Appendix}

The purpose of this section is to prove Theorem \ref{thm:lyapunov-transience}, which we recall says the following:

\begin{theorem}
    Let $X$ be a non-explosive continuous-time Markov chain on a countable discrete state space $\mathbb S$ with generator $\mathcal L$. Let $B\subset\mathbb S$, and let $\tau_B$ be the time for the process to enter $B$. Suppose there is some bounded function $V$ such that for all $x\in B^c$,
    \[
    \mathcal LV(x)\ge0.
    \]
    Then $\PP_{x_0}(\tau_B<\infty)<1$ for any $x_0$ such that
    \[
    \sup_{x\in B}V(x)<V(x_0).
    \]
\end{theorem}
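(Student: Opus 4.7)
The plan is to argue by contradiction, using a Dynkin-style optional stopping argument applied to the process $V(X(t))$. The hypothesis $\mathcal L V\ge 0$ on $B^c$ will make $V(X(t\wedge\tau_B))$ a submartingale, which gives a lower bound on its expectation. On the other hand, if $\tau_B<\infty$ almost surely, the process eventually enters $B$ and hence $V(X(\tau_B))$ is bounded above by $\sup_{x\in B}V(x)<V(x_0)$; this will contradict the submartingale lower bound. Boundedness of $V$ is what allows us to pass the limit $t\to\infty$ safely.

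More concretely, the first step is to invoke Dynkin's formula in the following form: because $X$ is a non-explosive CTMC on a countable state space and $V$ is bounded, for every deterministic $t\ge0$ and every stopping time $\tau$,
\begin{align*}
\EE_{x_0}\bigl[V(X(t\wedge \tau))\bigr]
=V(x_0)+\EE_{x_0}\!\left[\int_0^{t\wedge \tau}\mathcal L V(X(s))\,ds\right].
\end{align*}
Applying this with $\tau=\tau_B$ and using that $X(s)\in B^c$ for all $s<\tau_B$, the assumption $\mathcal L V\ge 0$ on $B^c$ gives the submartingale inequality
\begin{align*}
\EE_{x_0}\bigl[V(X(t\wedge\tau_B))\bigr]\ge V(x_0)\qquad\text{for every }t\ge 0.
\end{align*}

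The second step is the contradiction. Suppose toward contradiction that $\PP_{x_0}(\tau_B<\infty)=1$. Then almost surely $X(t\wedge\tau_B)\to X(\tau_B)\in B$ as $t\to\infty$, and in particular $V(X(t\wedge\tau_B))\to V(X(\tau_B))$ a.s. Since $V$ is bounded, the dominated convergence theorem lets us pass to the limit in the inequality above:
\begin{align*}
V(x_0)\le\lim_{t\to\infty}\EE_{x_0}\bigl[V(X(t\wedge\tau_B))\bigr]=\EE_{x_0}\bigl[V(X(\tau_B))\bigr]\le \sup_{x\in B}V(x).
\end{align*}
This contradicts the hypothesis $\sup_{x\in B}V(x)<V(x_0)$, so $\PP_{x_0}(\tau_B<\infty)<1$ as claimed.

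The main subtlety is justifying Dynkin's formula at the stopping time $t\wedge\tau_B$. This is where non-explosivity of $X$ and boundedness of $V$ enter: non-explosivity ensures $X(s)$ is well-defined for all $s\ge 0$, and boundedness of $V$ (hence of $\mathcal L V$, via the generator acting only on the finitely-or-countably many transitions out of each state) makes the compensated process $V(X(t))-\int_0^t\mathcal L V(X(s))\,ds$ a true martingale, so the optional stopping theorem applies to the bounded stopping time $t\wedge\tau_B$. Everything else in the argument is routine.
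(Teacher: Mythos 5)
Your overall strategy (a Dynkin/optional-stopping argument showing $\EE_{x_0}[V(X(t\wedge\tau_B))]\ge V(x_0)$ and then comparing with $\sup_{x\in B}V$) is the same one the paper uses, but there is a genuine gap at the key step: your justification of Dynkin's formula for the full chain. You claim that boundedness of $V$ forces boundedness of $\mathcal LV$ and hence that $V(X(t))-\int_0^t\mathcal LV(X(s))\,ds$ is a true martingale. This is false in general: on a countable state space one has $\abs{\mathcal LV(x)}\le 2\norm{V}_\infty\, q(x)$ where $q(x)$ is the total jump rate out of $x$, and nothing prevents $q(x)$ from being unbounded over $\mathbb S$. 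Indeed, in the applications this theorem is used for in the paper (e.g.\ Example \ref{Ex:Null recurrent chem}), the transition rates grow without bound in the state, so $\mathcal LV$ is typically unbounded even though $V$ takes values in $[0,1]$. Non-explosivity alone does not upgrade the compensated process from a local martingale to a true martingale, so the identity $\EE_{x_0}[V(X(t\wedge\tau))]=V(x_0)+\EE_{x_0}[\int_0^{t\wedge\tau}\mathcal LV(X(s))\,ds]$ cannot simply be invoked.

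The missing ingredient is a localization argument, and it is exactly what the paper supplies: let $\tau_m$ be the first exit time from an increasing sequence of finite sets $\mathbb S_m$ exhausting $\mathbb S$, apply Dynkin's formula to a chain killed outside $\mathbb S_m$ (a finite-state chain, where the formula is unproblematic) at the bounded stopping time $\tau_B\wedge\tau_m\wedge m$, use $\mathcal LV\ge0$ on $B^c$ to drop the integral term, and only then send $m\to\infty$; non-explosivity enters precisely here, guaranteeing $\tau_m\to\infty$ so that bounded convergence removes the localization. (The paper also shifts $V$ by $\sup_B V$ so that the limit directly yields the quantitative bound $\PP_{x_0}(\tau_B=\infty)\ge W(x_0)/\sup W$ rather than arguing by contradiction, but that is cosmetic.) If you insert this localization step, the rest of your argument --- the submartingale inequality, the dominated convergence passage, and the comparison with $\sup_{x\in B}V$ --- goes through.
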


Just like the theorem itself, the proof draws heavy inspiration from \cite{Stramer_Tweedie_1994}. Before providing the proof, we state the following well-known result:

\begin{lemma}[Dynkin's Formula]
Suppose $X$ is a Markov chain with finite state space $\mathbb S$, and let $\mathcal L$ be the generator of $X$. Then for any a.s. bounded stopping time $\tau$ and any $x\in\mathbb S$, we have 
\[
\EE_x[f(X_\tau)]=f(x)+\EE_x\left[\int_0^\tau \mathcal Lf(X_s)\mathrm ds\right]
\]
\end{lemma}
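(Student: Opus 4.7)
The plan is to prove Dynkin's formula via the standard martingale approach: produce a martingale $M_t = f(X_t) - f(X_0) - \int_0^t \mathcal{L}f(X_s)\,\mathrm ds$ associated to $f$, verify the martingale property using the semigroup/generator identity, and then apply the optional stopping theorem to the a.s.\ bounded stopping time $\tau$. Rearranging the resulting identity $\EE_x[M_\tau] = 0$ gives the claim.

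First I would set up notation. Let $\{P_t\}_{t\ge0}$ denote the transition semigroup of $X$, so that $(P_tf)(x) = \EE_x[f(X_t)]$. Because $\mathbb S$ is finite, every function on $\mathbb S$ is bounded; in particular $\|f\|_\infty<\infty$ and $\|\mathcal Lf\|_\infty<\infty$. On a finite state space the Kolmogorov backward equation gives the integrated identity
\[
P_tf - f = \int_0^t P_s(\mathcal Lf)\,\mathrm ds,
\]
valid pointwise on $\mathbb S$. I would take this as the key analytic input (it is just the fundamental theorem of calculus applied to the matrix exponential).

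Next I would verify that $M_t$ is a martingale with respect to the natural filtration $\mathcal F_t=\sigma(X_s:s\le t)$. For $0\le s \le t$, by the Markov property,
\begin{align*}
\EE_x[f(X_t)\mid \mathcal F_s]
&= (P_{t-s}f)(X_s)
= f(X_s) + \int_0^{t-s}(P_u\mathcal Lf)(X_s)\,\mathrm du \\
&= f(X_s) + \EE_x\!\left[\int_s^t \mathcal Lf(X_r)\,\mathrm dr \;\Big|\; \mathcal F_s\right],
\end{align*}
where the last equality uses the Markov property again (applied to the inner integrand). Subtracting $f(X_0)$ and $\int_0^s \mathcal Lf(X_r)\,\mathrm dr$ from both sides yields $\EE_x[M_t\mid\mathcal F_s]=M_s$, so $M$ is a martingale.

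Finally I would apply optional stopping. By hypothesis there is a deterministic constant $T$ with $\tau\le T$ a.s. On $[0,T]$ we have the uniform bound $|M_t|\le 2\|f\|_\infty + T\|\mathcal Lf\|_\infty$, so $M$ is a bounded martingale on $[0,T]$ and the optional stopping theorem applies to give $\EE_x[M_\tau]=\EE_x[M_0]=0$. Unpacking the definition of $M_\tau$ yields
\[
\EE_x[f(X_\tau)] - f(x) - \EE_x\!\left[\int_0^\tau \mathcal Lf(X_s)\,\mathrm ds\right]=0,
\]
which is exactly Dynkin's formula. The only mildly delicate step is the martingale verification, where one must be careful to use the Markov property twice and invoke the integrated semigroup identity; the boundedness afforded by finite $\mathbb S$ makes the optional stopping step essentially automatic, so I do not expect a serious obstacle.
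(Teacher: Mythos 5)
Your proof is correct: the integrated backward equation $P_tf-f=\int_0^tP_s(\mathcal Lf)\,\mathrm ds$ holds on a finite state space, the double use of the Markov property gives the martingale property of $M_t$, and since $\tau\le T$ almost surely for a deterministic $T$, the martingale is bounded on $[0,T]$ and optional stopping yields $\EE_x[M_\tau]=0$. There is nothing to compare against in the paper itself: the authors explicitly decline to prove this lemma, remarking only that it is well known and ``not hard to prove'' on a finite state space, so your martingale-plus-optional-stopping argument is exactly the standard proof they are implicitly invoking, and it correctly fills that gap.
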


In fact Dynkin's Formula is well-known in much greater generality than what is stated above, but as stated it is not hard to prove and is enough for our purposes.

\begin{proof}[Proof of Theorem \ref{thm:lyapunov-transience}]
    Define $W$ on $\mathbb S$ via $W=V-\sup_{x\in B}V(x)$. Notice that $W(x_0)$ is strictly positive, $W$ is nonpositive on $B$, and $\mathcal LW=\mathcal LV$. Fix some enumeration of $\mathbb S$ in which $x_0$ is the first element, and for $m\in\NN$ let $\mathbb S_m$ denote the first $m$ elements of $\mathbb S$. Let $\tau_m$ be the first time $X$ is not in $\mathbb S_m$. Let $\Delta$ be a new state not in $\mathbb S$, and for $m\in\NN$ define a new Markov chain $X^m$ via
    \[
        X^m_t=
        \begin{cases}
            X_t     &   t<\tau_m\\
            \Delta  &   t\ge\tau_m
        \end{cases}
    \]
    Notice that $X^m$ has finite state space $\mathbb S_m\cup\{\Delta\}$. Notice that $W$ is bounded since $V$ is, let $C=\sup_{x\in\mathbb S} W(x)$, and extend $W$ to a function on $\mathbb S\cup\{\Delta\}$ by setting $W(\Delta)=C$. Let $\mathcal L_m$ denote the generator of the process $X^m$; we claim that $\mathcal LW(x)\le\mathcal L_mW(x)$ whenever $x\in\mathbb S_m$. Indeed, notice that
    \begin{align*}
        \EE_x[W(X_t)]&=\sum_{y\in\mathbb S}W(y)\PP_x(X_t=y)\\
        &=\sum_{y\in\mathbb S}W(y)\PP_x(X_t=y,t<\tau_m)+\sum_{y\in\mathbb S}W(y)\PP_x(X_t=y,t\ge \tau_m)\\
        &\le \sum_{y\in\mathbb S}W(y)\PP_x(X_t=y,t<\tau_m)+\sum_{y\in\mathbb S}C\PP_x(X_t=y,t\ge \tau_m)\\
        &=\sum_{y\in\mathbb S_m}W(y)\PP_x(X_t^m=y)+W(\Delta)\PP_x(X_t^m=\Delta)\\
        &=\EE_x[W(X^m_t)],
    \end{align*}
    and hence
    \[
    \mathcal LW(x)
    =\lim_{t\searrow0}\frac{\EE_x[W(X_t)]-W(x)}t
    \le\lim_{t\searrow0}\frac{\EE_x[W(X^m_t)]-W(x)}t
    =\mathcal L_mW(x),
    \]
    as claimed. Now for any $m$, applying Dynkin's Formula to the chain $X^m$ with finite stopping time $\tau_B\MIN\tau_m\MIN m$ yields
    \[
    \EE_{x_0}[W(X^m_{\tau_B\MIN\tau_m\MIN m})]=W(x_0)+\EE_{x_0}\left[\int_0^{\tau_B\MIN\tau_m\MIN m} \mathcal L_mW(X^m_s)\mathrm ds\right].
    \]
    But for $s<\tau_B\MIN\tau_m$ we have $X^m_s=X_s\in B^c\cap\mathbb S_m$ and hence
    \begin{align*}
        \mathcal L_mW(X^m_s)
        =\mathcal L_mW(X_s)
        \ge \mathcal LW(X_s)
        =\mathcal LV(X_s)
        \ge0.
    \end{align*}
    So the integrand in Dynkin's Formula is non-negative, and
    \begin{align*}
        W(x_0)
        &\le\EE_{x_0}[W(X^m_{\tau_B\MIN\tau_m\MIN m})]\\
        &=\EE_{x_0}[W(X^m_{\tau_B})\II_{\tau_B<\tau_m\MIN m}]+\EE_{x_0}[W(X^m_{\tau_m\MIN m})\II_{\tau_B\ge \tau_m\MIN m}]\\
        %\EE_{x_0}[W(X^m_{\tau_B})\II_{\tau_B<\tau_m\MIN m}]+C\EE_{x_0}[\II_{\tau_B\ge \tau_m\MIN m}]\\
        &\le\EE_{x_0}[W(X^m_{\tau_B})\II_{\tau_B<\tau_m\MIN m}]+C\PP_{x_0}(\tau_B\ge \tau_m\MIN m).
    \end{align*}
    Note that $X_{\tau_B}^m\in B$ on the event $\tau_B <\tau_m\wedge m$.  Hence $W(X_{\tau_B}^m) \II_{\tau_B<\tau_m\MIN m} \le 0$, and
    \[
    W(x_0)\le C\PP_{x_0}(\tau_B\ge \tau_m\MIN m)
    \]
    Since $X$ is assumed to be non-explosive, $\tau_m\to\infty$ as $m\to\infty$, so taking $m\to\infty$ above gives
    \[
    W(x_0)\le C\PP_{x_0}(\tau_B=\infty).
    \]
    But $W(x_0)$ is strictly positive and $0<W(x_0)\le C<\infty$, so $\PP_{x_0}(\tau_B = \infty) \ne 0$. That is, $\PP_{x_0}(\tau_B<\infty)<1$, as desired.
\end{proof}

\begin{remark}
    Note that the proof above gives us a lower bound for the probability that the process never returns to the set $B$:
    \[
    \frac{W(x_0)}{C}\le \PP_{x_0}(\tau_B=\infty),
    \]
    where $C = \sup_{x\in \mathbb S} W(x)$ and $W=V-\sup_{x\in B}V(x)$. We do not make use of this fact.
\end{remark}

\section{Acknowledgements}
We gratefully acknowledge support from NSF grant DMS-2051498 and Army Research Office grant W911NF-18-1-0324.
We  thank Daniele Cappelletti, Gheorghe Craciun, Erik Bates, and Evan Sorenson for some clarifying conversations, and John Sporel and Logan Heath who advised on notation.

\bibliography{June_2023_Revision}{}
\bibliographystyle{plain}

\end{document}